\documentclass[a4paper]{amsart}
\usepackage[colorlinks,citecolor=blue,urlcolor=black,linkcolor=black]{hyperref}
\usepackage{amssymb,cmap,verbatim}
\usepackage{stmaryrd} 
\usepackage{mathrsfs}

\newtheorem{thm}{Theorem}[section]
\newtheorem{lemma}[thm]{Lemma}

\newtheorem{claim}{Claim}[thm]

\newtheorem{prop}[thm]{Proposition}
\newtheorem{fact}[thm]{Fact}

\newtheorem{conjecture}[thm]{Conjecture}

\newtheorem{notation}[thm]{Notation}
\newtheorem*{thmaa}{Theorem~A}

\theoremstyle{definition}
\newtheorem{defn}[thm]{Definition}

\theoremstyle{remark}
\newtheorem{remark}[thm]{Remark}

\DeclareMathOperator{\cf}{cf}

\DeclareMathOperator{\acc}{acc}

\DeclareMathOperator{\dom}{dom}
\DeclareMathOperator{\rng}{rng}
\DeclareMathOperator{\len}{len}
\DeclareMathOperator{\im}{rng}

\DeclareMathOperator{\tp}{tp}

\DeclareMathOperator{\bs}{bs}

\DeclareMathOperator{\ns}{NS}

\newcommand{\s}{\subseteq}
\newcommand{\br}{\blacktriangleright}

\newcommand{\EF}{\operatorname{EF}}

\renewcommand{\mid}{\mathrel{|}\allowbreak}
\renewcommand{\restriction}{\mathbin\upharpoonright}

\newcommand{\redub}{\hookrightarrow_B}                
\newcommand{\reduc}{\hookrightarrow_c}                

\title{Fodor space in generalized descriptive set theory}

\author{Ido Feldman}
\address{Department of Mathematics, Bar-Ilan University, Ramat-Gan 5290002, Israel.}
\urladdr{https://scholar.google.ca/citations?user=OkVgPx0AAAAJ}

\author{Miguel Moreno}
\address{Department of Mathematics and Statistics, University of Helsinki, Helsinki 00560, Finland.}
\address{Institute of Mathematics, University of Vienna, Vienna 1090, Austria.}
\address{Mathematics Research Centre, Tampere University, Tampere, Finland.}
\urladdr{http://miguelmath.com}

\begin{document}
\begin{abstract}
We study the continuous reducibility of isomorphism relations in the space of regresive functions in $\kappa^\kappa$. We show for inaccessible $\kappa$, that if $\mathcal{T}$ is a theory with less than $\kappa$ non-isomorphic models of size $\kappa$ and $\mathcal T'$ is unstable or superstable non-classifiable, then
the isomorphism of models of $\mathcal{T}$ is continuous reducible to the isomorphism of models of $\mathcal{T}'$.
\end{abstract}


\maketitle

\section{Introduction}

One of the main motivations to developed generalized descriptive set theory has been its connection with model theory.
This connection was studied by Friedman, Hyttinen, and Weinstein (former Kulikov) in \cite{FHK13}. From their work, they conjectured the existence of a generalized Borel-reducibility counterpart of Shelah's main gap theorem.

\begin{conjecture}\label{the_conjecture}
    Let $\mathcal T_1$ be a classifiable theory and $\mathcal T_2$ a non-classifiable theory. Is there a Borel reduction from the isomorphism relation of $\mathcal T_1$ to the isomorphism relation of $\mathcal T_2$?
\end{conjecture}

Hyttinen, Weinstein, and Moreno show the consistency of a positive answer to this conjecture in \cite{HKM}. In \cite{HKM}, \cite{Mor21} the authors gave a positive answer to the conjecture when $\kappa$ is a successor cardinal (under certain cardinal assumptions), and $T_2$ a stable unstable theory. Later on, in \cite{Mor23} the conjecture was proved correct for $\kappa$ a successor cardinal (under certain cardinal assumptions). 

\begin{fact}[Borel reducibility Main Gap, Moreno, Theorem A \cite{Mor23}]\label{Borel_Bain_gap}
    Suppose $\kappa=\lambda^+=2^\lambda$, $2^{\mathfrak{c}}\leq\lambda=\lambda^{\omega_1}$, and $\mathcal T_1$ and $\mathcal T_2$ are countable complete first-order theories in a countable vocabulary. If $\mathcal T_1$ is a classifiable theory and $\mathcal T_2$ is a non-classifiable theory, then $$\cong_{\mathcal T_1}\ \reduc\ \cong_{\mathcal T_2} \textit{ and } \cong_{\mathcal T_2}\ \not\redub\ \cong_{\mathcal T_1}.$$
\end{fact}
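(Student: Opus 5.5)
This statement is quoted from \cite{Mor23} (Theorem~A there), so the plan reconstructs the architecture of that proof. It splits into a positive half, $\cong_{\mathcal T_1}\ \reduc\ \cong_{\mathcal T_2}$, and a negative half, $\cong_{\mathcal T_2}\ \not\redub\ \cong_{\mathcal T_1}$. Both go through equivalence relations on $\kappa^\kappa$ of the form $E^\kappa_{\mu}$ (equality modulo the non-stationary ideal restricted to cofinality $\mu$) and their ``colored'' refinements, which serve as intermediaries.

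For the positive half I would proceed in three steps.

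\textbf{Step 1: the classifiable side.} Since $\mathcal T_1$ is classifiable, models of size $\kappa$ are determined up to isomorphism by Ehrenfeucht--Fraïssé games of length $\omega$ (Shelah's dimension theory together with $\kappa=\lambda^+$). Hence $\cong_{\mathcal T_1}$ is $\Delta^1_1$ and is continuously reducible to an equivalence relation of the form $E^\kappa_{\lambda\text{-club}}$, that is, equality modulo clubs on $\lambda$-cofinal ordinals. More precisely, following \cite{HKM}, I would show $\cong_{\mathcal T_1}\ \reduc\ E^{2}_{\lambda\text{-club}}$, and with $\lambda^{\omega_1}=\lambda$ even into the $\omega$-cofinal version.

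\textbf{Step 2: the non-classifiable side.} I would show $E^2_{\omega\text{-club}}\ \reduc\ \cong_{\mathcal T_2}$ by cases.
\begin{itemize}
\item If $\mathcal T_2$ is unstable or superstable with the DOP or OTOP, use Shelah's construction of ordered/colored trees $A^f$ indexed by $f\in 2^\kappa$ and the associated Ehrenfeucht--Mostowski models. One needs $f\,E\,g$ iff $\mathcal M(A^f)\cong\mathcal M(A^g)$; $\lambda^{\omega_1}=\lambda$ and $2^{\mathfrak c}\le\lambda$ are used to control the types realized along $\omega$-cofinal branches.
\item If $\mathcal T_2$ is stable unsuperstable, use \cite{Mor21}.
\item If $\mathcal T_2$ is superstable with the DOP or OTOP, use Shelah's $\omega_1$-branch trees together with colored ordered trees. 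This is the step that needs $\lambda^{\omega_1}=\lambda$.
\end{itemize}
Continuity of the coding $f\mapsto$ model is checked by showing that each initial segment of the code of $\mathcal M(A^f)$ depends only on an initial segment of $f$; this follows from a club of ``nice'' approximations.

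\textbf{Step 3: composition.} Composing Steps 1 and 2 gives the continuous reduction. Bridging the $\lambda$ versus $\omega$ cofinality gap may require $E^\kappa_{\lambda}\ \reduc\ E^\kappa_\omega$ under the stated cardinal arithmetic, via $\diamondsuit$ arguments available since $\kappa=2^\lambda$.

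For the negative half: $\cong_{\mathcal T_1}$ is $\Delta^1_1$ because of the EF-game characterization, while $\cong_{\mathcal T_2}$ is not $\Delta^1_1$ since $E^2_{\omega\text{-club}}$ reduces to it and is not $\Delta^1_1$ (by \cite{FHK13}). Borel reducibility preserves being $\Delta^1_1$, so $\cong_{\mathcal T_2}\ \not\redub\ \cong_{\mathcal T_1}$.

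The main obstacle is Step 2 for superstable theories with the DOP or OTOP: building models whose isomorphism type detects stationarity on $\omega$-cofinal points while staying continuous. I would first attempt to reduce to ordered trees via the OTOP/DOP coding of Shelah's Chapter XII.
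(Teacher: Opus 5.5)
Your architecture matches the one the paper describes for \cite{Mor23}. Classifiable $\cong_{\mathcal T_1}$ goes into an equality-modulo-nonstationary relation by the HKM lemma (Fact~\ref{HKM1}), that relation goes into $\cong_{\mathcal T_2}$ via coloured ordered trees and EM models, you compose, and the negative half is the $\Delta^1_1$ argument. The gap is that your composition in Step~3 does not go through as written. The cofinality of the intermediate relation is not the same on the two sides, and the bridge you propose is neither available nor needed.

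\textbf{(i) The target $E^2_\lambda$.} You cannot get $\cong_{\mathcal T_1}\reduc E^2_\lambda$ with $\lambda$ the predecessor of $\kappa$. Fact~\ref{HKM1} needs $\diamondsuit(S^\kappa_\lambda)$. From $2^\lambda=\lambda^+$ (with $\lambda$ uncountable), Shelah's diamond theorem only gives $\diamondsuit(S)$ for stationary $S$ disjoint from $S^{\kappa}_{\cf(\lambda)}$.
\begin{itemize}
\item If $\lambda$ is regular, you are in exactly the excluded cofinality.
\item If $\lambda$ is singular, which $\lambda=\lambda^{\omega_1}$ allows, then $S^\kappa_\lambda=\emptyset$. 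So $E^2_\lambda$ has a single class and cannot receive $\cong_{\mathcal T_1}$.
\end{itemize}
The bridge $E^\kappa_\lambda\reduc E^\kappa_\omega$ is not supplied by the hypotheses; reductions between different cofinalities are sensitive to the set-theoretic background.

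\textbf{(ii) The DOP case.} Your own Step~2 uses trees with $\omega_1$-cofinal branches. This is forced by the $L_{\omega_1\omega_1}$-definability of the tree order; compare the choice $\lambda=\omega_1$ for DOP in Lemma~\ref{red_cor}. Such a construction gives $=^2_{\omega_1}\reduc\cong_{\mathcal T_2}$, not $=^2_{\omega}\reduc\cong_{\mathcal T_2}$, so Step~1 into the $\omega$-cofinal relation does not compose with it. Your Step~2 bullets are also inconsistent: DOP/OTOP appears in both the first and the third bullet, with different branch lengths.

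\textbf{The fix.}
\begin{itemize}
\item Take $\mu=\omega_1$ if $\mathcal T_2$ has the DOP, and $\mu=\omega$ otherwise.
\item Since $\lambda=\lambda^{\omega_1}$ gives $\cf(\lambda)>\omega_1$ by K\"onig, $2^\lambda=\lambda^+$ yields $\diamondsuit(S^\kappa_\mu)$.
\item Apply Fact~\ref{HKM1} directly at $\mu$ and compose with $=^2_\mu\reduc\cong_{\mathcal T_2}$.
\item The negative half then uses that $=^2_\mu$ is not $\Delta^1_1$, for the same $\mu$.
\end{itemize}

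\textbf{Continuity.} Continuity of $f\mapsto$ (code of the model) does not come from a club of nice approximations. It comes from coherence of the filtrations. Knowing $f\restriction\alpha$ determines $(T^f)^\alpha$, hence the Skolem hull of that part of the skeleton, and a coherent enumeration of the models then gives continuity, as in the proof of Lemma~\ref{red_cor}.

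Niceness, $(<\kappa)$-stability and the associated clubs are what the hard direction needs: isomorphic models imply $f$ and $g$ are equivalent. Your plan only asserts that direction (``one needs \ldots'') rather than indicating how it is obtained.
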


The case $\kappa$ an inaccessible cardinal was study by Hyttinen and Moreno in \cite{HM} and  \cite{Mor}. 
In \cite{HM} and \cite{Mor} a positive answer was given for $\mathcal T_2$ a stable theory with OCP or superstable with S-DOP. In \cite{MM}, Mangraviti and Motto Ros showed that the Friemand-Hyttinen-Kulikov conjecture holds for classifiable shallow theories with at most $\kappa$ non-isomorphic models.

\begin{fact}[Mangraviti-Motto Ros, Proposition 6.7 \cite{MM}]\label{Man-Mot}
    If $\mathcal T_1$ is theory with at most $\kappa$ non-isomorphic models of size $\kappa$ and $\mathcal T_2$ a non-classfiable thoery, then $\cong_{\mathcal T_1}\ \redub\ \cong_{\mathcal T_2}$.
\end{fact}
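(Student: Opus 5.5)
The plan is to build the reduction by hand from the fact that $\cong_{\mathcal T_1}$ has at most $\kappa$ classes and $\cong_{\mathcal T_2}$ has at least $\kappa$ classes, using only the \emph{Borel} (not continuous) complexity of the classes of $\cong_{\mathcal T_1}$. Enumerate the non-isomorphic models of $\mathcal T_1$ of size $\kappa$ as $\langle \mathcal A_\alpha : \alpha<\mu\rangle$ with $\mu\le\kappa$. Let $C_\alpha\subseteq\kappa^\kappa$ be the set of codes $\eta$ with $\mathcal A_\eta\cong\mathcal A_\alpha$. Since $\mathcal T_2$ is non-classifiable, Shelah's non-structure theory gives $2^\kappa\ge\kappa$ pairwise non-isomorphic models of $\mathcal T_2$ of size $\kappa$. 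Choose codes $\xi_\alpha$, $\alpha<\mu$, of pairwise non-isomorphic models of $\mathcal T_2$, and define $F(\eta)=\xi_\alpha$ for the unique $\alpha$ with $\eta\in C_\alpha$. Then $F$ reduces $\cong_{\mathcal T_1}$ to $\cong_{\mathcal T_2}$ by construction. The only remaining task is to check that $F$ is Borel.

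For each $\alpha$, I would show that $C_\alpha$ is Borel in the generalized sense, i.e.\ in the $\kappa^+$-algebra generated by the basic open sets. The tool is a Scott-type analysis. Because there are at most $\kappa$ classes, the relevant Ehrenfeucht--Fra\"iss\'e game of length $\omega$ (or a short game in $L_{\kappa^+\kappa}$) should be determined and characterize the class: classifiable shallow theories have models characterized up to isomorphism by an $L_{\infty\kappa}$-sentence of bounded depth. Small-model-count theories are classifiable and shallow by the main gap, so each $\mathcal A_\alpha$ has a Scott sentence $\varphi_\alpha$ in $L_{\kappa^+\kappa}$. Each such sentence defines a Borel set of codes, by the generalized Lopez-Escobar direction, so $C_\alpha=\{\eta:\mathcal A_\eta\models\varphi_\alpha\}$ is Borel.

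Next I check that $F$ is Borel. Given a basic open set $N_p$ in the target, $F^{-1}(N_p)=\bigcup\{C_\alpha:\xi_\alpha\in N_p\}$ is a union of at most $\kappa$ Borel sets, hence Borel. So $F$ is Borel, and this gives $\cong_{\mathcal T_1}\redub\cong_{\mathcal T_2}$.

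I expect the main obstacle to be the step producing the $L_{\kappa^+\kappa}$ Scott sentences: I must verify that at most $\kappa$ models forces shallowness with depth below $\kappa^+$. My first attempt is to cite Shelah's main gap, since deep or non-classifiable theories have $2^\kappa$ models. I would then use the Hyttinen--Tuuri result that for classifiable shallow $\mathcal T$ the game $\EF_\omega^\kappa$ characterizes isomorphism, which yields the sentences.
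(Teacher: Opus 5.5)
Your proposal follows the route the paper indicates for this fact: at most $\kappa$ classes, each $\kappa$-Borel, each sent to the code of a distinct model of $\mathcal T_2$. The paper gets Borelness of the classes from $\cong_{\mathcal T_1}$ being $\kappa$-Borel for classifiable shallow theories, and your $L_{\kappa^+\kappa}$ Scott sentences do the same job. The depth issue you flag is not actually an obstacle: with $\kappa^{<\kappa}=\kappa$, the Scott analysis on ${<}\kappa$-tuples of any structure of size $\kappa$ stabilizes below $\kappa^+$, so the $\EF^\kappa_\omega$ characterization of isomorphism for classifiable theories already gives the sentences. The one thing to add is that $F$ must also send the Borel set of codes of non-models of $\mathcal T_1$, which is a single $\cong_{\mathcal T_1}$-class, to a fixed code of a non-model of $\mathcal T_2$.
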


From Shelah's Main Gap Theorem 6.1 \cite{Sh90}, we know that under the assumption 

$$\kappa=\aleph_\gamma \textit{ is such that }\beth_{\omega_1}(\mid\gamma\mid)\leq\kappa$$

any classifiable shallow theory $\mathcal T$ has less than $\kappa$ non-isomorphic models of size $\kappa$, and only classifiable shallow theories have less than $2^\kappa$ non-isomorphic models of size $\kappa$. Thus under that assumption, Fact \ref{Man-Mot} tells us that if $\mathcal T_1$ is classifiable and $\mathcal T_2$ is not classifiable, then $\cong_{\mathcal T_1}\ \redub\ \cong_{\mathcal T_2}$.
As mentioned in \cite{MM}, Fact \ref{Man-Mot} is a direct consequence of the number of equivalence classes and the fact that $\cong_{\mathcal  T}$ is $\kappa$-Borel when $\mathcal T$ is a classifiable shallow theory. This is why it is a Borel reduction and not a continuous reduction (which is stronger). A continuous reduction version of Fact \ref{Man-Mot} would be more informative, but it requires a different approach. Clearly Fact \ref{Borel_Bain_gap} gives us a continuous reduction version of Fact \ref{Man-Mot} when $\kappa$ is a successor cardinal. In this paper we will show a continuous reduction version of Fact \ref{Man-Mot} for $\kappa$ strongly inaccessible.

\begin{thmaa}
    Let $\kappa$ be a strongly inaccessible cardinal. If $\mathcal T_1$ is a theory with less than $\kappa$ non-isomorphic models of size $\kappa$ and $\mathcal T_2$ is unstable or a superstable non-classifiable theory, then $\cong_{\mathcal T_1}\ \reduc\ \cong_{\mathcal T_2}$.
\end{thmaa}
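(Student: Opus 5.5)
We route the reduction through equivalence relations on the Fodor space, i.e.\ the space of regressive functions in $\kappa^\kappa$ mentioned in the abstract, and use the stationarity-based reductions already known for non-classifiable theories. Let $\mu<\kappa$ be the number of non-isomorphic models of $\mathcal T_1$ of size $\kappa$.

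\textbf{Step 1: a "large" equivalence relation into which $\cong_{\mathcal T_2}$ absorbs.} From \cite{HM} and \cite{Mor}, for $\kappa$ inaccessible: if $\mathcal T_2$ is unstable or superstable non-classifiable, then $=^2_\lambda$ (equality modulo the non-stationary ideal restricted to $\lambda$-cofinal ordinals) continuously reduces to $\cong_{\mathcal T_2}$ for suitable regular $\lambda<\kappa$. For unstable theories take $\lambda=\omega$; for superstable non-classifiable theories the reduction goes through the DOP/OTOP constructions with an appropriate $\lambda$. So it suffices to show
\[
\cong_{\mathcal T_1}\ \reduc\ E,
\]
where $E$ is a suitable relation continuously reducible to $=^2_\lambda$. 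The natural choice is the relation on the Fodor space, $f\sim g$ iff $f$ and $g$ agree on a $\lambda$-club, restricted to regressive functions. By Fodor's lemma a regressive function is constant on a stationary set. The plan is first to show $=^\mu_\lambda$ (functions into $\mu$ modulo $\lambda$-clubs) $\reduc\ =^2_\lambda$. Since $\mu<\kappa$, this follows by coding $\mu$ colours via a partition of $S^\kappa_\lambda$ into $\mu$ disjoint stationary sets, using the fact that for inaccessible $\kappa$ the coding can be done continuously.

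\textbf{Step 2: reducing $\cong_{\mathcal T_1}$ to $=^\mu_\lambda$.} This is the main obstacle: with fewer than $\kappa$ classes the relation is $\kappa$-Borel, but we need \emph{continuity}. Fix representatives $\mathcal A_i$, $i<\mu$. Given a code $\eta$ of a model $\mathcal A_\eta$ and $\alpha<\kappa$, let $\mathcal A_\eta\restriction\alpha$ be the substructure with domain $\alpha$. For inaccessible $\kappa$, on a club of $\alpha$ this substructure is elementary in $\mathcal A_\eta$. Define $F(\eta)(\alpha)=i$ if $i<\mu$ is least such that $\mathcal A_\eta\restriction\alpha\cong\mathcal A_i\restriction\alpha$ (and, say, $0$ otherwise). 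Then $F(\eta)(\alpha)$ depends only on $\eta\restriction\alpha$ (up to bookkeeping of the coding), so $F$ is continuous. Hence $F(\eta)$ is a regressive-type colouring, which is where the Fodor space enters.

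For correctness we must show the following: $\mathcal A_\eta\cong\mathcal A_i$ iff $\{\alpha:F(\eta)(\alpha)=i\}$ contains a $\lambda$-club. The forward direction uses the standard club argument: an isomorphism $\pi$ between models of size $\kappa$ restricts to an isomorphism on the club of $\alpha$ closed under $\pi^{\pm1}$. Minimality of $i$ is handled by noting that the relation "$\mathcal A_j\restriction\alpha\cong\mathcal A_i\restriction\alpha$ on a stationary set" for $j\neq i$ must be excluded. For this we use the converse direction together with Fodor's lemma, pressing down on the witnessing isomorphisms to glue them.

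The converse is the hard part. If $\mathcal A_\eta\restriction\alpha\cong\mathcal A_i\restriction\alpha$ for stationarily many $\alpha$, we need $\mathcal A_\eta\cong\mathcal A_i$. Here we use that $\mathcal T_1$ is classifiable shallow, which is forced by having $<\kappa$ models: such models are determined by $L_{\infty,\kappa}$-type invariants of bounded depth, or equivalently by Ehrenfeucht–Fraïssé games $\EF$ of length $<\kappa$ (depth $<\omega_1$ suffices for shallow theories). Agreement of the small pieces at stationarily many, hence cofinally many, elementary $\alpha$ then yields a winning strategy for $\exists$ in the relevant game on the whole models. If this fails directly, we would modify $F$ so that $F(\eta)(\alpha)$ records the least $i$ with $\mathcal A_\eta\restriction\alpha\equiv_{\infty,\kappa}^{\delta}\mathcal A_i\restriction\alpha$ at the fixed countable depth $\delta$ bounding the shallow theory. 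Continuity then still holds, by inaccessibility, since these equivalences are decided below $\alpha$.

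**Step 3.** Compose the reductions: $\cong_{\mathcal T_1}\reduc\,=^\mu_\lambda\ \reduc\,=^2_\lambda\ \reduc\,\cong_{\mathcal T_2}$.
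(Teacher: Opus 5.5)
\textbf{Gap: the compression step.} Your chain rests on $=^\mu_\lambda\ \reduc\ =^2_\lambda$ in Step~1, which you assert but do not prove. Coding colours by a partition of $S^\kappa_\lambda$ into stationary blocks gives a pointwise map $h(f)(\alpha)=\phi_\alpha(f(\alpha))$ with $\phi_\alpha\colon\mu\to 2$. No such map is a reduction once $\mu\ge 3$:
\begin{itemize}
\item each $\phi_\alpha$ identifies some pair $i\neq j$;
\item there are fewer than $\kappa$ pairs, so by $\kappa$-completeness of the nonstationary ideal a single pair $(i,j)$ is identified on a stationary $X\subseteq S^\kappa_\lambda$;
\item the constant function $i$ and the function that is $j$ on $X$ and $i$ elsewhere are $=^\mu_\lambda$-inequivalent but have identical images.
\end{itemize}
So a reduction would have to extract non-local information from $f$, and you give no mechanism for this. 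The known passage from many colours to two, Fact~\ref{HKM1}, uses $\diamondsuit_\lambda$, and inaccessibility of $\kappa$ is not a substitute. Your Step~2 is essentially the paper's Proposition~\ref{first_reduction} (with isomorphism of restrictions in place of $R^\alpha_{EF}$). So if this compression were available, the theorem would follow at once from Proposition~\ref{first_reduction} together with your Step~1's two-colour reduction into $\cong_{\mathcal T_2}$. The paper says explicitly that it cannot stay in $2^\kappa$ and must ``code more information'' by passing to $\kappa$ colours.

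\textbf{What the paper does instead.} It never goes through $2^\kappa$. Lemma~\ref{red_cor} reduces $=^{\mathbb F}_{S_0}$ on regressive functions directly to $\cong_{\mathcal T_2}$. For $f\in\mathbb F^*$, each colour class $f^{-1}\{\sigma\}\cap S_0$ (replaced by a dummy $E_\sigma\subseteq S_1$ when it is nonstationary) is coded in its own subtree $T^f_\sigma$ of $T^f=\bigvee_{\sigma<\kappa}T^f_\sigma$. The branches of $T^f_\sigma$ are $\lambda$-cofinal to $E_\sigma$, and Ehrenfeucht--Mostowski models are then built over $T^f$. Because the $E_\sigma$ are pairwise disjoint, different colours cannot be confused. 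In Lemma~\ref{iso_direction}, Fodor's lemma applied to $f$ and then to $g$ yields colours $\sigma_f\neq\sigma_g$. The disjointness of $E_{\sigma_f}$ and $E_{\sigma_g}$ then drives the analysis of the supports $\Pi(\eta)$ of a putative isomorphism. That construction is the missing idea.

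\textbf{Smaller point in Step~2.} The converse direction is not obtained by gluing isomorphisms at cofinally many $\alpha$. The right argument: since $\mathcal T_1$ is classifiable, non-isomorphism gives $\mathbf I$ a winning strategy in $\EF^\kappa_\omega$. That strategy restricts to the games on $\mathcal A_\eta\restriction\alpha,\mathcal A_i\restriction\alpha$ for club many $\alpha$ (Hyttinen--Moreno, Lemma~2.4), whereas isomorphic restrictions would give $\mathbf{II}$ a win there. You also need a separate colour for codes of non-models, since your default value $0$ collides with the colour $i=0$.
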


Notice that strictly stable (stable unsuperstable) theories are not covered in the previous theorem. As it was showed in \cite{HM} OCP implies that the theory is unsuperstable, thus stable theories with OCP are strictly stable. Unfortunately not all strictly stable theories have the OCP. In a private correspondence with the second author; Mennuni pointed out that in a non-multi-dimensional theory, no type is orthogonal to any set (see Lemma V 5.4 \cite{Sh90}). Thus a strictly stable non-multi-dimensional theory doesn't have the OCP. On the other hand any theory of modules is non-multi-dimensional (see Corollary 3.5 \cite{PP} or Corollary 6.21 \cite{Pre}). So any strictly stable theory of modules (e.g. the theory of $\mathbb Z^{\aleph_0}$ with the Abelian group structure) doesn't have the OCP (this answers Question 4.5 \cite{Mort}, originally asked by Villaveces).

\subsection{Background}

Let us recall the basic definitions, we refer the reader to \cite{FHK13} and \cite{Mor23} for
more information.

The generalized Baire space is the set $\kappa^\kappa$ endowed with the bounded topology, in this topology the basic open sets are of the form $$[\zeta]=\{\eta\in \kappa^\kappa\mid \zeta\subseteq \eta\}$$
where $\zeta\in \kappa^{<\kappa}$.

Let $E_1$ and $E_2$ be
equivalence relations on $\kappa^\kappa$. We say that $E_1$ is
\emph{Borel reducible to $E_2$} if there is a Borel function $f\colon
\kappa^\kappa\rightarrow \kappa^\kappa$ that satisfies $$(\eta,\xi)\in E_1\iff
(f(\eta),f(\xi))\in E_2.$$  We call $f$ a reduction of $E_1$ to
$E_2$ and we denote this by $E_1\redub E_2$. 
 In the case $f$ is continuous,
 we say that $E_1$ is continuously reducible to $E_2$ and
we denote it by $E_1\reduc E_2$.

\begin{defn}\label{struct}
Let $\mathcal{L}=\{Q_m\mid m\in\omega\}$ be a countable relational language.
Fix $\pi$ a bijection between $\kappa^{<\omega}$ and $\kappa$. For every $\eta\in \kappa^\kappa$ define the structure $\mathcal{A}_\eta$ with domain $\kappa$ as follows.
For every tuple $(a_1,a_2,\ldots , a_n)$ in $\kappa^n$ $$(a_1,a_2,\ldots , a_n)\in Q_m^{\mathcal{A}_\eta}\Leftrightarrow Q_m \text{ has arity } n \text{ and }\eta(\pi(m,a_1,a_2,\ldots,a_n))>0.$$
\end{defn}

\begin{defn}
Assume $\mathcal T$ a first-order theory in a relational countable language, we define the isomorphism relation, $\cong_\mathcal  T~\subseteq \kappa^\kappa\times \kappa^\kappa$, as the relation $$\{(\eta,\xi)|(\mathcal{A}_\eta\models \mathcal T, \mathcal{A}_\xi\models \mathcal T, \mathcal{A}_\eta\cong \mathcal{A}_\xi)\text{ or } (\mathcal{A}_\eta\not\models \mathcal T, \mathcal{A}_\xi\not\models \mathcal T)\}$$
\end{defn}

Throughout this paper all the theories will be first-order theories.
We will follow the approach used in \cite{Mor21} and \cite{Mor23} to prove Theorem A. 
The proof of Fact \ref{Borel_Bain_gap} is divided in three parts, it uses the equivalence modulo stationary sets to construct colored ordered trees, then generalized Ehrenfeucht-Mostowski models are constructed from the colored ordered trees, and finally the reduction is constructed from the models.

\begin{defn}\label{clubrel}
Let $S\subseteq \kappa$ a stationary set. Given $\beta\leq\kappa$, we define the equivalence relation $=_S^\beta\ \subseteq\ \beta^\kappa\times \beta^\kappa$, 
as follows $$\eta\mathrel{=^\beta_S}\xi \iff \{\alpha<\kappa\mid \eta(\alpha)\neq\xi(\alpha)\}\cap S \text{ is non-stationary}.$$
\end{defn}

Let $\mu<\kappa$ be a regular cardinal and $S_\mu=\{\alpha<\kappa\mid \cf (\alpha)=\mu\}$ and $S_{\ge\mu}=\{\alpha<\kappa\mid \cf (\alpha)\ge\mu\}$. Let us denote $=_{S_\mu}^\beta$ by $=_{\mu}^\beta$.

The proof of Fact \ref{Borel_Bain_gap} finishes by applying the following reductions from \cite{HKM}.

\begin{fact}[Hyttinen-Kulikov-Moreno, Lemma 2 \cite{HKM}]\label{HKM1}
Let $\lambda<\kappa$ be a regular cardinal. Assume $\mathcal T$ is a countable complete classifiable theory over a countable vocabulary. If $\diamondsuit_\lambda$ holds, then $\cong_\mathcal T \ \reduc\ =^2_\lambda$.
\end{fact}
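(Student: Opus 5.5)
The plan is to follow the standard Hyttinen--Kulikov route. We build the reduction $\eta\mapsto f(\eta)$ by coding a canonical isomorphism invariant of $\mathcal A_\eta$ on stationary many $\lambda$-cofinal points, and use $\diamondsuit_\lambda$ to guess partial isomorphisms. Since $\mathcal T$ is classifiable, two models of size $\kappa$ are isomorphic if and only if they are $\EF^\kappa_{\omega}$-equivalent, and in fact $L_{\infty,\kappa}$-equivalent. Moreover, non-isomorphic models can be told apart by the existence of a club of $\alpha$ at which the restrictions $\mathcal A_\eta\restriction\alpha$ and $\mathcal A_\xi\restriction\alpha$ are not isomorphic. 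This is the content of the standard lemma (Shelah, used in \cite{FHK13}): for a classifiable theory, $\mathcal A\cong\mathcal B$ if and only if the set of $\alpha$ with $\cf(\alpha)=\lambda$ and $(\mathcal A\restriction\alpha,\mathcal A\restriction\gamma)_{\gamma<\alpha}$ behaving like $(\mathcal B\restriction\alpha,\dots)$ contains a club intersected with $S_\lambda$. I would take the relevant version to be: $\mathcal A_\eta\cong\mathcal A_\xi$ if and only if $\{\alpha\in S_\lambda\mid \mathcal A_\eta\restriction\alpha\cong\mathcal A_\xi\restriction\alpha\}$ contains $C\cap S_\lambda$ for some club $C$, restricted to $\alpha$ for which both restrictions are elementary submodels.

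Let $\langle A_\alpha\mid\alpha\in S_\lambda\rangle$ be a $\diamondsuit_\lambda$ sequence, which guesses subsets of $\kappa$; via coding, $A_\alpha$ guesses a triple $(\zeta\restriction\alpha,\ldots)$. Define $f(\eta)(\alpha)=1$ if $\alpha\in S_\lambda$ and $\mathcal A_\eta\restriction\alpha\cong\mathcal A_{A_\alpha}\restriction\alpha$, where $A_\alpha$ is read as an element of $\alpha^\alpha$, both structures model the relevant fragment, and $\alpha$ is closed under $\pi$. Otherwise set $f(\eta)(\alpha)=0$. Continuity is immediate, since $f(\eta)(\alpha)$ depends only on $\eta\restriction\alpha$. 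Non-models are handled separately: if $\mathcal A_\eta\not\models\mathcal T$, we send $\eta$ to a fixed code, made continuous by letting the first failure be detected at a bounded stage, which works because $\cong_\mathcal T$ identifies all non-models.

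For the reduction property, first suppose $\mathcal A_\eta\cong\mathcal A_\xi$. Then there is a club on which $\mathcal A_\eta\restriction\alpha\cong\mathcal A_\xi\restriction\alpha$, so $f(\eta)$ and $f(\xi)$ agree there by transitivity of $\cong$. Conversely, suppose $\mathcal A_\eta\not\cong\mathcal A_\xi$. Then there is a club $C$ on whose $\lambda$-cofinal points the restrictions are non-isomorphic. Apply $\diamondsuit_\lambda$ to $\eta$ and the club: we obtain stationarily many $\alpha\in C\cap S_\lambda$ with $A_\alpha=\eta\restriction\alpha$. At such $\alpha$ we have $f(\eta)(\alpha)=1$, while $f(\xi)(\alpha)=0$ because $\mathcal A_\xi\restriction\alpha\not\cong\mathcal A_\eta\restriction\alpha$. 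So $f(\eta)\neq_\lambda^2 f(\xi)$.

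The main obstacle is the model-theoretic input: for classifiable $\mathcal T$, non-isomorphism of models of size $\kappa$ must be witnessed on a club of $\lambda$-cofinal restrictions. For shallow classifiable theories with $\lambda\geq\omega_1$, the Scott-height argument handles it. For general $\lambda$, including $\lambda=\omega$, one needs the $\lambda$-saturated / $\EF$-game characterization from \cite{FHK13}, choosing restrictions to be elementary submodels that are $F^a_\omega$-constructible over a suitable skeleton. I would cite that characterization rather than reprove it.
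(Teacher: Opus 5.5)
Your overall architecture is the standard one: a $\diamondsuit_\lambda$-sequence $\langle A_\alpha\rangle$, with $f(\eta)(\alpha)=1$ iff $\alpha\in S_\lambda$ and $\eta\restriction\alpha$ is locally equivalent to the guess $A_\alpha$. The gap is your choice of local relation.

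With plain isomorphism of the restrictions, reflexivity and transitivity come for free. But your converse direction then needs a reflection statement: if $\mathcal A_\eta\not\cong\mathcal A_\xi$, then $\mathcal A_\eta\restriction\alpha\not\cong\mathcal A_\xi\restriction\alpha$ for all $\alpha\in C\cap S_\lambda$, for some club $C$. You give no proof of this, and it is not what the sources you point to establish. What Friedman--Hyttinen--Kulikov and Hyttinen--Moreno prove (the latter's Lemma~2.4, quoted in Section~2) is weaker in the relevant sense. On a club, $\mathbf{II}$ does not win $\EF^\alpha_\omega(\mathcal A_\eta\restriction\alpha,\mathcal A_\xi\restriction\alpha)$, the game in which both players' moves are coded by ordinals below $\alpha$.

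That result comes from closing $\alpha$ under a winning strategy of $\mathbf I$ in $\EF^\kappa_\omega(\mathcal A_\eta,\mathcal A_\xi)$. Such a strategy exists because, for classifiable $\mathcal T$, non-isomorphic models are not $\EF^\kappa_\omega$-equivalent and the game is determined. It does not give non-isomorphism of the restrictions:
\begin{itemize}
\item $|\alpha|$ can be far above $\cf(\alpha)=\lambda$, so an isomorphism $h\colon\mathcal A_\eta\restriction\alpha\to\mathcal A_\xi\restriction\alpha$ may send bounded subsets of $\alpha$ onto cofinal ones.
\item Pieces of $h$ therefore need not be moves coded below $\alpha$.
\item $\mathbf I$'s strategy is not guaranteed to stay inside $\alpha$ when answered by such pieces.
\end{itemize}
So you cannot play $h$ against $\mathbf I$'s strategy. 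The remark about $F^a_\omega$-constructible restrictions does not supply the missing argument.

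The way around this is to use the game relation itself: set $f(\eta)(\alpha)=1$ iff $\alpha\in S_\lambda$ and $\eta\ R^\alpha_{EF}(\mathcal T)\ A_\alpha$. For $\alpha$ closed under $\pi$, this depends only on $\eta\restriction\alpha$ and $A_\alpha$. It already handles non-models, since it requires both restrictions to satisfy $\mathcal T$ or both to fail it. Your converse argument then goes through verbatim, using Lemma~2.4 and Shelah's theorem that $R^\kappa_{EF}(\mathcal T)$ coincides with $\cong_{\mathcal T}$.

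Now, however, two properties are no longer automatic, because composing strategies of $\mathbf{II}$ need not yield moves coded below $\alpha$:
\begin{itemize}
\item reflexivity, which you need to get $f(\eta)(\alpha)=1$ at the guessing points;
\item transitivity, which you need in your forward direction.
\end{itemize}
Both hold on the club given by Fact~\ref{clubfina}, and this is the ingredient missing from your plan. The paper only cites the statement you are proving. The nearest argument it gives, the proof of Proposition~\ref{first_reduction}, is organized exactly this way, with $\kappa$ colours in place of $\diamondsuit_\lambda$-guessing.

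Two smaller points:
\begin{itemize}
\item Drop your separate clause sending non-models to a fixed code. Whether $\mathcal A_\eta\models\mathcal T$ is in general not decided by a bounded part of $\eta$: a failure of a $\forall\exists$ axiom need not be visible on any initial segment. So that map is not continuous, and it is unnecessary anyway.
\item Do not build ``$\mathcal A_\eta\restriction\alpha\preceq\mathcal A_\eta$'' into the definition of $f$, since it is not determined by $\eta\restriction\alpha$ and would break continuity.
\end{itemize}
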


Notice that the reduction constructed in \cite{Mor23} is a reduction constructed in the generalized Cantor subspace, $2^\kappa$. Therefore the ordered colored trees used have two colors. To prove Theorem A,
We shall need to code more information 
using ordered colored trees.
This means increasing the number of colors to $\kappa$. 
Thus, we construct the reduction in the Fodor space (see below).

Consequently, we will modify the construction of the Ehrenfeucht-Mostowski models, and use a reduction different from Fact \ref{HKM1}.

\subsection{Oganization of this paper}
In Section 2 we introduce the Fodor subspace and the objects relative to it. In Section 3 we construct the ordered colored trees following the ideas of \cite{HK}, \cite{HM}, and \cite{Mor21}. In Section 4 we construct generalized Ehrenfeucht-Mostowski models and study the isomorphism between the them. In Section 5 we prove the main result.

\section{Fodor space}
Following the intuition of  \cite{MM}, if $E$ is a $\kappa$-Borel equivalence relation on $\kappa^\kappa$ with at most $\kappa$ equivalence classes, then the enumeration $\langle x_i\mid x_i\text{ is an $E$- equivalence class}\rangle$ induced the function $\mathcal{F}:\kappa^\kappa\rightarrow\kappa^\kappa$:
 $$\mathcal{F}(\eta)(\alpha)=
   \begin{cases}
    i &\mbox{if } \alpha=0\\
    0 & \mbox{otherwise, }
  \end{cases}
  $$
where $\eta\in x_i$. Clearly $\mathcal{F}$ is a $\kappa$-Borel reduction from $E$ into $0_\kappa$, where $f\ 0_\kappa\ g$ if and only if $f(0)=g(0)$. This kind of reduction obtained from an enumeration of the equivalence classes, may induce a reduction $\mathcal{F}$ on ``\textit{evetually regresive functions}", i.e. for all $\eta\in \kappa^\kappa$ there is $\alpha<\kappa$ such that $\mathcal{F}(\eta)\restriction(\kappa\backslash \alpha)$ is regresive. Thus we are not interested on the whole Baire space but the one of the eventually regresive functions.

\begin{defn}[Fodor space]
    Let $\mathbb F=\{\eta\in \kappa^\kappa\mid \exists\alpha<\kappa(\forall\beta>\alpha\ \eta(\beta)<\beta)\}$. The Fodor space is the set $\mathbb F$ with the induced topology.
\end{defn}
Notice that the subspaces $\beta^\kappa$ are subpaces of $\mathbb F$, in particular the generalized Cantor space.

Let $S\subseteq \kappa$ a stationary set, we define the equivalence relation $=_S^\mathbb F$ as $=_S^\kappa\cap\ (\mathbb F\times \mathbb F)$.
Clearly Fact \ref{HKM1} induced a result on $=_S^\mathbb F$, i.e. if $\diamondsuit_\lambda$ holds, then $\cong_\mathcal T \ \reduc\ =^\mathbb F_\lambda$. In the case of theories $\mathcal T$ with less than $\kappa$ non-isomorphic models, $\diamondsuit_\lambda$ is not needed. This follows from an observation of the proof of Theorem 2.8 \cite{HM}.

\begin{prop}\label{first_reduction}
    Let $S\subseteq \kappa$ be a stationary set. Assume $\mathcal T$ is a countable complete classifiable theory over a countable vocabulary. Then $\cong_\mathcal T \ \reduc\ =^\kappa_S$. 
    
    If $\cong_\mathcal T$ has less than $\kappa$ equivalence classes, then $\cong_\mathcal T \ \reduc\ =^\mathbb F_S$.
\end{prop}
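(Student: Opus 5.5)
The plan is to follow the proof of Theorem 2.8 in \cite{HM}, which rests on the fact that for a classifiable theory $\mathcal T$ the isomorphism of models of size $\kappa$ is characterized by an Ehrenfeucht--Fra\"iss\'e game of length $\omega$. The characterization I will use is this. Let $\mathcal A_\eta\models\mathcal T$ and $\mathcal A_\xi\models \mathcal T$. Then $\mathcal A_\eta\cong\mathcal A_\xi$ if and only if the set of $\alpha<\kappa$ with $\mathcal A_\eta\restriction\alpha\cong\mathcal A_\xi\restriction\alpha$ contains a club. Otherwise, its complement contains a club intersected with $S_{\geq\omega_1}$, or more generally is stationary in every stationary set via the relevant filtration argument. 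The precise dichotomy from \cite{FHK13} and \cite{HM} is that, for classifiable $\mathcal T$ and strongly inaccessible $\kappa$, there is a club $C$ such that for every $\alpha\in C$ the following holds: $\mathcal A_\eta\restriction\alpha$ is a model of $\mathcal T$ and $\alpha\in C$, and isomorphism of the restrictions up to a club decides isomorphism of the full models. I will take the club-guessing-free version from \cite{HM}, which uses the fact that $\kappa$ is inaccessible, so that there are fewer than $\kappa$ structures of size $<\kappa$ up to isomorphism below each $\alpha$.

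For the first part I will define $\mathcal F(\eta)(\alpha)$ as a code of the isomorphism type of $\mathcal A_\eta\restriction\alpha$ when $\alpha\in S$ and this restriction is a model of the relevant fragment, and as $0$ otherwise. Since $\kappa$ is inaccessible, for each $\alpha$ there are at most $2^{|\alpha|}<\kappa$ isomorphism types of structures with domain $\alpha$. I fix an injective coding of these types into ordinals below $\kappa$. Continuity is immediate, because $\mathcal F(\eta)(\alpha)$ depends only on $\eta\restriction\gamma$ for some $\gamma<\kappa$ large enough to contain all $\pi$-codes of tuples from $\alpha$; a club of closure points of $\pi$ suffices. Non-models are handled by the standard trick of sending them to a fixed function whose values differ from those of every model on $S$, for example by using an extra value. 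The reduction property then follows from the characterization in the previous paragraph.

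For the second part, suppose $\cong_{\mathcal T}$ has $\theta<\kappa$ classes. The codes above are not regressive, so I replace them. On the club of $\alpha$ where $\mathcal A_\eta\restriction\alpha\prec\mathcal A_\eta$ (for models), the restriction $\mathcal A_\eta\restriction\alpha$ is a model of $\mathcal T$ of size $|\alpha|$. I will code only the isomorphism type among the models of $\mathcal T$ of size $|\alpha|$ realized in this way. Using the classifiable-shallow structure theory, the number of such types is $<\kappa$ and is bounded uniformly; the natural guess is at most $\theta$, or at most $|\alpha|$ beyond some point. I therefore fix, for each $\alpha>\theta$, an enumeration of these types in order type below $\alpha$, and set $\mathcal F(\eta)(\alpha)$ equal to the index. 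This value is below $\alpha$ for all $\alpha>\theta$, so $\mathcal F(\eta)\in\mathbb F$, while continuity and the reduction property are inherited from the first part.

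The main obstacle is justifying that bound: that the number of isomorphism types of elementary restrictions at $\alpha$ is $<\alpha$ for all large $\alpha$ in a club. If this cannot be obtained directly, my fallback is to use the global enumeration $\langle x_i\mid i<\theta\rangle$ of the classes. I would let $\mathcal F(\eta)(\alpha)=i$ at those $\alpha\in S$ for which $\mathcal A_\eta\restriction\alpha$ is isomorphic to the restriction to $\alpha$ of a fixed representative of $x_i$, and $0$ elsewhere. I would then check, via the club characterization, that for a club of $\alpha$ this $i$ is determined and unique. This requires the restrictions of distinct representatives to be non-isomorphic on a club, which again comes from the dichotomy.
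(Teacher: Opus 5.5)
There is a genuine gap in the step you present as a known characterization. You code the \emph{isomorphism type} of $\mathcal A_\eta\restriction\alpha$. So the whole reduction rests on the following claim: if $\mathcal A_\eta,\mathcal A_\xi\models\mathcal T$ are not isomorphic, then $\{\alpha\in S\mid \mathcal A_\eta\restriction\alpha\not\cong\mathcal A_\xi\restriction\alpha\}$ is stationary for \emph{every} stationary $S$, including $S\subseteq S_\omega$. Your first paragraph only gestures at this (``a club intersected with $S_{\ge\omega_1}$, or more generally \dots''), and the ``precise dichotomy'' you state is circular. The Hyttinen--Moreno results you invoke concern the game relation $R^\alpha_{EF}(\mathcal T)$, not isomorphism of restrictions.

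The natural proof does not give your claim either. By classifiability and Gale--Stewart determinacy, $\mathbf{I}$ wins $\EF^\kappa_\omega(\mathcal A_\eta,\mathcal A_\xi)$. Closing under $\mathbf{I}$'s strategy only shows that, on a club, $\mathbf{I}$ wins the \emph{restricted} game $\EF^\alpha_\omega(\mathcal A_\eta\restriction\alpha,\mathcal A_\xi\restriction\alpha)$. This contradicts an isomorphism $G$ of the restrictions only if $\mathbf{II}$ can answer $\mathbf{I}$'s moves with pieces of $G$ indexed below $\alpha$. That needs $G^{-1}[X]$ to stay bounded in $\alpha$ whenever $X$ is bounded, which is fine for regular $\alpha$. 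When $\cf(\alpha)=\omega$, however, a countable bounded $X$ can have cofinal preimage, and the argument breaks down. So for $S\subseteq S_\omega$ your reduction property is unsupported. Your fallback for the second part relies on the same unproved dichotomy.

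The missing idea, which is what the paper uses, is to code the class of $\eta$ under $E_\alpha:=R^\alpha_{EF}(\mathcal T)$ instead of the isomorphism type of $\mathcal A_\eta\restriction\alpha$.

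\begin{itemize}
\item Since $\mathcal T$ is classifiable, $R^\kappa_{EF}(\mathcal T)$ coincides with $\cong_{\mathcal T}$.
\item The Hyttinen--Moreno Lemma 2.4 (closing under whichever player wins) gives both halves of the club dichotomy for $E_\alpha$ at every $\alpha$ of a club, regardless of cofinality.
\item Fact~\ref{clubfina} gives a club $C$ on which $E_\alpha$ is an equivalence relation. You then enumerate its classes as $\langle x^\alpha_i\mid 0<i<\kappa\rangle$ and set $F(\eta)(\alpha)=i$ if $\alpha\in C$ and $\eta\in x^\alpha_i$, and $0$ otherwise.
\end{itemize}

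Non-models are then handled locally, as a single $E_\alpha$-class. Sending them to a fixed function, as you propose, is not continuous, because being a model of $\mathcal T$ is not a clopen condition.

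For the second part, the paper obtains a uniform bound $\theta$ on the number of $E_\alpha$-classes by appeal to Morley's conjecture; your main route leaves exactly this bound open. Your fallback, however, becomes a complete argument once phrased with $E_\alpha$, and it needs no such bound. Fix codes $\zeta_i$, $i<\theta$, of representatives of the $\cong_{\mathcal T}$-classes. Put $F(\eta)(\alpha)=1+\min\{i\mid \eta\,E_\alpha\,\zeta_i\}$ for $\alpha\in C$, and $0$ if there is no such $i$ or $\alpha\notin C$. Intersect the fewer than $\kappa$ clubs on which the $\zeta_i$ are pairwise $E_\alpha$-inequivalent. This shows $F$ is a continuous reduction to $=^{\mathbb F}_S$ with all values $\le\theta$, hence landing in $\mathbb F$.
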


The idea behind the proof is to find an $S$-approximation of $\cong_\mathcal T$ and use the Approximation Lemma (Lemma 6.6 \cite{MP25}). For more on the approximation lemma see \cite{MP25}.

\begin{defn}[The EF game] 
      For every $\alpha\leq\kappa$ the game $\EF^\alpha_\omega(\mathcal{A}\restriction_\alpha,\mathcal{B}\restriction_\alpha)$
on the restrictions 
$\mathcal{A}\restriction\alpha$ and $\mathcal{B}\restriction\alpha$
of the structures $\mathcal{A}$ and $\mathcal{B}$ with domain $\kappa$ is
defined as follows:

In the $n$-th move, first ${\bf I}$ chooses an
ordinal $\beta_n<\alpha$ such that $X_{\beta_n}\subset \alpha$ and
$X_{\beta_{n-1}}\subseteq X_{\beta_n}$. Then ${\bf II}$ chooses an
ordinal $\theta_n<\alpha$ such that
$\dom(f_{\theta_n}),\im(f_{\theta_n})\subset \alpha$,
$X_{\beta_n}\subseteq \dom(f_{\theta_n})\cap\im(f_{\theta_n})$ and
$f_{\theta_{n-1}}\subseteq f_{\theta_n}$ (if $n=0$ then
$X_{\beta_{n-1}}=\emptyset$ and $f_{\theta_{n-1}}=\emptyset$). 

The game ends after $\omega$ moves. Player ${\bf II}$ wins if
$\bigcup_{i<\omega}f_{\theta_i}\colon A\restriction_\alpha\rightarrow
B\restriction_\alpha$ is a partial isomorphism. Otherwise player
${\bf I}$ wins. 

If $\alpha=\kappa$ then this is the same as the
standard $\EF$-game which is usually denoted by $\EF^\kappa_\omega$.
 
  We will write ${\bf I}\uparrow \EF^\alpha_\omega(\mathcal{A}\restriction_\alpha,\mathcal{B}\restriction_\alpha)$ when ${\bf I}$ has a winning strategy in the game $\EF^\alpha_\omega(\mathcal{A}\restriction_\alpha,\mathcal{B}\restriction_\alpha)$. Similarly for ${\bf II}$.
\end{defn}

\begin{defn}
Assume $\mathcal T$ is a complete first order theory in a countable vocabulary. For every $\alpha\leq \kappa$ and $\eta,\xi\in \kappa^\kappa$, we write $\eta\ R_{EF}^\alpha(\mathcal{T})\ \xi$ if one of the following holds, $\mathcal{A}_\eta\restriction_\alpha\not\models \mathcal  T$ and $\mathcal{A}_\xi\restriction_\alpha\not\models \mathcal T$, or $\mathcal{A}_\eta\restriction_\alpha\models \mathcal T$, $\mathcal{A}_\xi\restriction_\alpha\models \mathcal T$ and $\bf{II}\uparrow$ EF$^\kappa_\omega (\mathcal{A}_\eta\restriction_\alpha,\mathcal{A}_\xi\restriction_\alpha)$.
\end{defn}

\begin{fact}[Hyttinen-Moreno, \cite{HM} Lemma 2.7]\label{clubfina}
For every complete first order theory $\mathcal{T}$ in a countable vocabulary, there are club many $\alpha$ such that $R_{EF}^\alpha(\mathcal{T})$ is an equivalence relation. 
\end{fact}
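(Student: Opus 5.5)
The plan is to verify the three properties of an equivalence relation directly, and to obtain the club from closure conditions on the fixed enumerations $\langle X_\beta\mid\beta<\kappa\rangle$ of the bounded subsets of $\kappa$ and $\langle f_\theta\mid \theta<\kappa\rangle$ of the partial functions $\kappa\to\kappa$ of size $<\kappa$ used in the definition of the game. These enumerations exist because $\kappa^{<\kappa}=\kappa$. The cases in which $\mathcal A_\eta\restriction_\alpha\not\models\mathcal T$ are trivial: the "both are not models" clause is itself an equivalence relation on those $\eta$, and a model is never related to a non-model. So the substance is the EF part. For reflexivity, ${\bf II}$ answers $X_{\beta_n}$ with the identity on $X_{\beta_n}$; this requires some $\theta<\alpha$ with $f_\theta=\mathrm{id}_{X_{\beta_n}}$. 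Symmetry follows by transporting a winning strategy through inverses, which requires that $f_\theta^{-1}$ is enumerated below $\alpha$ whenever $f_\theta$ is.

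Transitivity is the real step. Suppose $\sigma_1$ is winning for ${\bf II}$ in $\EF^\kappa_\omega(\mathcal A\restriction_\alpha,\mathcal B\restriction_\alpha)$ and $\sigma_2$ is winning for ${\bf II}$ in $\EF^\kappa_\omega(\mathcal B\restriction_\alpha,\mathcal C\restriction_\alpha)$. Suppose ${\bf I}$ plays $X_{\beta_n}$ in the $(\mathcal A,\mathcal C)$ game. Then ${\bf II}$ proceeds as follows.
\begin{enumerate}
\item In game 1, ${\bf II}$ feeds to ${\bf I}$ the set
\[
X_{\beta_n}\cup g_{n-1}^{-1}[X_{\beta_n}]\cup \dom(f_{n-1})\cup\im(f_{n-1})\cup\dom(g_{n-1})\cup\im(g_{n-1}),
\]
and obtains $f_n$ from $\sigma_1$.
\item In game 2, ${\bf II}$ feeds the set $X_{\beta_n}\cup \im(f_n)\cup\dom(g_{n-1})\cup\im(g_{n-1})$, and obtains $g_n$ from $\sigma_2$.
\item ${\bf II}$ answers with $g_n\circ f_n$.
\end{enumerate}
With this bookkeeping, $X_{\beta_n}\subseteq\dom(g_n f_n)$ at each stage. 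For ranges the argument runs one step behind: $X_{\beta_n}\subseteq\im(g_{n+1}f_{n+1})$, since every $c\in X_{\beta_n}$ has $g_n^{-1}(c)\in\dom f_{n+1}\cap \im f_{n+1}$. The fed sets increase, so the moves are legal. At the end, $\bigcup_n g_n f_n=(\bigcup g_n)\circ(\bigcup f_n)$ restricted appropriately, and it is a composition of partial isomorphisms. Its domain and range contain $\bigcup_n X_{\beta_n}$. So it is a partial isomorphism, and ${\bf II}$ wins. This gives ${\bf II}$ a winning strategy.

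The main obstacle is making all of these moves legal inside $\alpha$. Every set fed to ${\bf I}$ must be some $X_\beta$ with $\beta<\alpha$ and $X_\beta\subseteq\alpha$. Every answer must be some $f_\theta$ with $\theta<\alpha$, and with domain and range contained in $\alpha$. To arrange this, I would define finitely many functions $h\colon\kappa^{<\omega}\to\kappa$:
\begin{enumerate}
\item the index of $X_\beta\cup\dom f_\theta\cup\im f_\theta\cup f_{\theta'}^{-1}[X_\beta]$;
\item the index of $f_\theta\circ f_{\theta'}$;
\item the index of $f_\theta^{-1}$;
\item the index of $\mathrm{id}_{X_\beta}$;
\item $\sup(X_\beta)$ and $\sup(\dom f_\theta\cup\im f_\theta)$.
\end{enumerate}
Let $C$ be the club of $\alpha<\kappa$ closed under all of these functions. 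For $\alpha\in C$, every object built in the strategies above is indexed below $\alpha$ and lives below $\alpha$, so $R^\alpha_{EF}(\mathcal T)$ is transitive, reflexive and symmetric. The case $\alpha=\kappa$ needs no closure. The only delicate point I anticipate is checking that the one-step lag in the range condition does not break legality at round $0$; taking $f_{-1}=g_{-1}=\emptyset$ handles it.
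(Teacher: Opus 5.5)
\textbf{Verdict.} The paper does not prove this fact; it is quoted from Hyttinen--Moreno. Your overall route is the natural one: a club of $\alpha$ closed under the index operations, the identity strategy for reflexivity, inverses for symmetry, and composed strategies for transitivity. Reflexivity, symmetry, the non-model cases and the closure bookkeeping are fine. There is, however, a genuine gap in transitivity, which is easy to repair.

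\textbf{The gap.} In $\EF^\alpha_\omega$ as defined, ${\bf II}$'s $n$-th move must satisfy $X_{\beta_n}\subseteq\dom(f_{\theta_n})\cap\im(f_{\theta_n})$ at that very round; the rules allow no lag. Your own analysis shows that $g_n\circ f_n$ is only guaranteed to cover $X_{\beta_n}$ in its range at round $n+1$. So $g_n\circ f_n$ is in general an \emph{illegal} answer at every round, not just at round $0$. Setting $f_{-1}=g_{-1}=\emptyset$ does not help: at round $0$ you feed only $X_{\beta_0}$, and nothing forces $g_0^{-1}(c)\in\im f_0$ for $c\in X_{\beta_0}$. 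For instance, take $X_{\beta_0}=\{c\}$. Nothing prevents $\sigma_1$ from answering $f_0=\{(c,d),(e,c)\}$. Then $\sigma_2$, fed $\{c,d\}$, may answer $g_0=\{(c,x),(d,y),(z,c),(w,d)\}$ with $c,d,x,y$ distinct. This gives $\im(g_0f_0)=\{x,y\}$, which does not contain $c$.

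\textbf{The repair.} Spend two rounds of each auxiliary game on each round of the composite game.
\begin{enumerate}
\item Obtain $f_{2n}$ and $g_{2n}$ exactly as in your steps (1)--(2).
\item Feed game 1 with $\dom f_{2n}\cup\im f_{2n}\cup\dom g_{2n}\cup\im g_{2n}$, which contains $g_{2n}^{-1}[X_{\beta_n}]$, and obtain $f_{2n+1}$.
\item Feed game 2 with $\im f_{2n+1}\cup\dom g_{2n}\cup\im g_{2n}$ and obtain $g_{2n+1}$.
\item Answer $h_n=g_{2n+1}\circ f_{2n+1}$.
\end{enumerate}

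\textbf{Checking the repaired move.} Since $\im f_{2n+1}\subseteq\dom g_{2n+1}$, we get $\dom h_n=\dom f_{2n+1}\supseteq X_{\beta_n}$. For $c\in X_{\beta_n}$, the point $b=g_{2n}^{-1}(c)$ lies in $\im f_{2n+1}$, so $c=g_{2n+1}(b)\in\im h_n$. The fed sets still increase, and the auxiliary games still last $\omega$ rounds. We also have $h_n\subseteq h_{n+1}$, and $\bigcup_n h_n=(\bigcup_k g_k)\circ(\bigcup_k f_k)$ is a composition of partial isomorphisms. Legality of all the indices is handled by your closure club, once you include the union operations these fed sets require. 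With this change the transitivity argument goes through.
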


\begin{fact}[Hyttinen-Moreno, Lemma 2.4 \cite{HM}]
Suppose $\eta,\xi\in\kappa^\kappa$ and $\mathcal{T}$ is a complete first order theory in a countable vocabulary. Then the following hold:
\begin{itemize}
\item $\eta\ R_{EF}^\kappa(\mathcal{T})\ \xi\Longleftrightarrow \exists C\subseteq\kappa\text{ a club, such that }\  \eta\ R_{EF}^\alpha(\mathcal{T})\ \xi$ for all $\alpha\in C$.
\item $\neg (\eta\ R_{EF}^\kappa(\mathcal{T})\ \xi)\Longleftrightarrow \exists C\subseteq\kappa\text{ a club, such that }\  \neg(\eta\ R_{EF}^\alpha(\mathcal{T})\ \xi)$ for all $\alpha\in C$. 
\end{itemize}
\end{fact}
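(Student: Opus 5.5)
The plan is to prove the two ``$\Rightarrow$'' directions directly. Each ``$\Leftarrow$'' direction then follows from the other ``$\Rightarrow$'' direction, because two clubs always intersect. Concretely, suppose $\eta\ R_{EF}^\alpha(\mathcal T)\ \xi$ holds on a club $C$ but $\neg(\eta\ R_{EF}^\kappa(\mathcal T)\ \xi)$. Then the second ``$\Rightarrow$'' gives a club $D$ on which the relation fails, and $C\cap D\neq\emptyset$ is a contradiction. The same argument works symmetrically. So everything reduces to transferring the global situation at $\kappa$ to club many $\alpha$. For this I use two closure arguments: Löwenheim--Skolem type closure for the ``$\models\mathcal T$'' clause, and closure under a winning strategy for the game clause.

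\textbf{Step 1 (the theory clause).} Since the vocabulary is countable and relational, there is a club $C_0$ of $\alpha$ such that $\mathcal A_\eta\restriction_\alpha\preceq\mathcal A_\eta$ and $\mathcal A_\xi\restriction_\alpha\preceq\mathcal A_\xi$. I obtain it by closing under Skolem witnesses for the countably many formulas, and using regularity of $\kappa$. On $C_0$, $\mathcal A_\eta\restriction_\alpha\models\mathcal T$ iff $\mathcal A_\eta\models\mathcal T$: if some sentence $\varphi\in\mathcal T$ fails in $\mathcal A_\eta$, it fails in the elementary substructure as well. The same holds for $\xi$. This settles the cases where at least one of the structures is not a model of $\mathcal T$, in both bullets.

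\textbf{Step 2 (the game clause).} Assume both are models. I also intersect with a club $C_1$ on which the fixed enumerations $\langle X_\beta\rangle$ of $[\kappa]^{<\kappa}$ and $\langle f_\theta\rangle$ of partial functions of size $<\kappa$ cohere with $\alpha$. On $C_1$, for $\beta,\theta<\alpha$, the sets $X_\beta$ and $\dom f_\theta,\im f_\theta$ are subsets of $\alpha$ whenever legal, and every small subset of $\alpha$ has an index below $\alpha$. This uses that $\kappa$ is inaccessible, so $\kappa^{<\kappa}=\kappa$.

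If $\mathbf{II}$ has a winning strategy $\sigma$ in $\EF^\kappa_\omega(\mathcal A_\eta,\mathcal A_\xi)$, let $C_2$ be the club of $\alpha$ closed under $\sigma$: any finite sequence of moves below $\alpha$ is answered below $\alpha$. This is a club because there are $\kappa^{<\omega}=\kappa$ positions and $\kappa$ is regular. For $\alpha\in C_0\cap C_1\cap C_2$, playing $\sigma$ in the restricted game is legal. Its result is a partial isomorphism of the big structures with domain and range inside $\alpha$, hence a partial isomorphism of the restrictions. So $\mathbf{II}$ wins there.

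If $\mathbf{II}$ has no winning strategy, I invoke determinacy. $\mathbf{I}$ wins exactly when the union fails to be a partial isomorphism, and such a failure is witnessed at a finite stage. So the game is open for $\mathbf I$, and the Gale--Stewart theorem gives $\mathbf I$ a winning strategy $\tau$. The club of $\alpha$ closed under $\tau$ works the same way. $\mathbf I$'s moves stay below $\alpha$, and any response of $\mathbf{II}$ in the restricted game is a legal response in the full game. Therefore the play is lost by $\mathbf{II}$, and a failure of partial isomorphism on elements $<\alpha$ is a failure in the restrictions.

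The main obstacle I expect is Step 2's bookkeeping. I must check that legality of moves in $\EF^\alpha_\omega$ exactly matches legality in the full game when restricted below $\alpha$. This is the role of $C_1$, and the coherence of the enumerations with $\alpha$ needs some care. I also need determinacy so that the negative case yields a strategy for $\mathbf I$.
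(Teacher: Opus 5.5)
The paper gives no proof of this statement; it is quoted from Hyttinen--Moreno. Your argument is correct and is the standard one: a L\"owenheim--Skolem club handles the clause $\models\mathcal T$, the club of ordinals closed under a winning strategy handles the positive game case, and Gale--Stewart determinacy (the game is open for $\mathbf I$) handles the negative case. The backward implications then follow from the forward ones because two clubs always meet.

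There is one harmless slip. In $C_1$ you require that every small subset of $\alpha$ has an index below $\alpha$. This is not a club condition for an arbitrary enumeration: if each initial segment $\gamma$ is listed as $X_{\gamma+1}$, then the subset $\alpha$ never has index below $\alpha$. You never use this requirement, though. All you need is $X_\beta\subseteq\alpha$ and $\dom f_\theta\cup\im f_\theta\subseteq\alpha$ for $\beta,\theta<\alpha$, together with the one-directional transfers you actually invoke: the opponent's moves that are legal in the restricted game are legal in the full game, and the strategy's moves are legal in the restricted game on the club. So legality in $\EF^\alpha_\omega$ does not need to match legality in the full game exactly.
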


\begin{proof}[Proof of Proposition \ref{first_reduction}]
    Let $\mathcal{T}$ be a countable complete classifiable theory over a countable vocabulary. Since $\mathcal{T}$ is classifiable, $R_{EF}^\kappa(\mathcal{T})$ coincide with $\cong_\mathcal T$. To simplify the notation, lets denote by $E_\alpha$ the relation $R_{EF}^\kappa(\mathcal{T})$. So
\begin{itemize}
\item $\eta\ \cong_\mathcal T\ \xi\Longleftrightarrow \exists C\subseteq\kappa\text{ a club, such that }\  \eta\ E_\alpha\ \xi$ for all $\alpha\in C$.
\item $\neg (\eta\ \cong_\mathcal T\ \xi)\Longleftrightarrow \exists C\subseteq\kappa\text{ a club, such that }\  \neg(\eta\ E_\alpha\ \xi)$ for all $\alpha\in C$. 
\end{itemize}

    Let $C\subseteq \kappa$ be the club from Fact \ref{clubfina}. For all $\alpha\in C$, let $\langle x^\alpha_i\mid 0<i<\kappa\rangle$ be an enumeration of the $E_\alpha$-equivalence classes (this can be done since $\kappa^{<\kappa}=\kappa$). Let us define $F:\kappa^\kappa\rightarrow\kappa^\kappa$ as follows:
        $$F(\eta)(\alpha)=\begin{cases} i &\mbox{if }\alpha\in C\mbox{ and } \eta\in x_i^\alpha,\\
    0 & \mbox{otherwise. }\end{cases}$$  
    It is easy to see that $F$ is a continuous reduction from $\cong_\mathcal T$ to $=_S^\kappa$. 
    
    Finally, if $\cong_\mathcal T$ has less than $\kappa$ equivalence classes, then by Morley's conjecture, there is $\theta<\kappa$ (the smallest) such that for all $\alpha\in C$, $E_\alpha$ has less than $\theta$ equivalence classes. So for all $\eta\in \kappa^\kappa$ and $\beta>\theta$, $F(\eta)(\beta)<\theta<\beta$. Thus $F(\eta)\in \mathbb F$ and $\cong_\mathcal T \ \reduc\ =^\mathbb F_S$.
    In this case, the reduction above can be define such that for all $\eta\in \kappa^\kappa$, $F(\eta)\restriction\theta$ is constant to $0$. Thus, we can construct $F$ such that for all $\eta\in \kappa^\kappa$, $F(\eta)\in\{\zeta\in \mathbb F\mid \forall\alpha>0 (\zeta(\alpha)<\alpha)\}$.
\end{proof}

\section{Trees}

We will construct a variation of Hyttinen-Kulikov's coloured trees, this will allow us to construct models from $\eta\in \mathbb F$.

Let $t$ be a tree, for every $x\in t$ we denote by $ht(x)$ the height of $x$, the order type of $\{y\in t | y\prec x\}$. Define $t_\alpha=\{x\in t|ht(x)=\alpha\}$ and $t_{<\alpha}=\cup_{\beta<\alpha}t_\beta$, denote by $x\restriction \alpha$ the unique $y\in t$ such that $y\in t_\alpha$ and $y\prec x$. If $x,y\in t$ and $\{z\in t|z\prec x\}=\{z\in t|z\prec y\}$, then we say that $x$ and $y$ are $\sim$-related, $x\sim y$, and we denote by $[x]$ the equivalence class of $x$ for $\sim$.
An $\alpha, \beta$-tree is a tree $t$ with the following properties:
\begin{itemize}
\item $|[x]|<\alpha$ for every $x\in t$.
\item All the branches have order type less than $\beta$ in $t$.
\item $t$ has a unique root.
\item If $x,y\in t$, $x$ and $y$ have no immediate predecessors and $x\sim y$, then $x=y$.
\end{itemize}

Given a tree $t$, we say that a sequence $( I_\alpha)_{ \alpha<\kappa}$ is a \textit{filtration} of $t$ if the following hold:
\begin{itemize}
    \item it is an increasing sequence of downwards closed subsets of $t$;
    \item $\bigcup_{\alpha<\kappa}I_\alpha=t$;
    \item if $\rho<\kappa$ is a limit ordinal, then $I_\rho=\bigcup_{\alpha<\rho}I_\alpha$;
    \item for all $\alpha<\kappa$, $|I_\alpha|<\kappa$.
\end{itemize}

\begin{defn}[Coloured tree]\label{D.2.1}
Let $\gamma$ be an ordinal smaller than $\kappa$. A coloured tree is a pair $(t,c)$, where $t$ is a $\kappa^+$, $(\gamma+2)$-tree and $c$ is a map $c:t_\gamma\rightarrow \{0,1\}$ (the color function).
\end{defn}

Let $E\subseteq \cf(\lambda)$ be a stationary set. Given a function $\eta$ from $\alpha\leq \lambda\cdot\lambda$ to $\kappa$, we say that $\eta$ is $\lambda$-cofinal to $E$ if for all $s\leq \alpha$ with cofinality $\lambda$, $sup(rang(\eta\restriction s))\in E$.

For all $f\in 2^\kappa$ and stationary $E\subseteq \cf(\lambda)$, define the tree $(\mathcal R^E_f,r^E_f)$ as, $\mathcal R^E_f$ the set of all strictly increasing functions, $\eta$, from some $\alpha\leq \lambda\cdot\lambda$ to $\kappa$, $\lambda$-cofinal to $E$, and if $\eta$ has domain $\lambda\cdot\lambda$, then $r^E_f(\eta)=f(\sup(\im(\eta)))$.

For every pair of ordinals $\alpha$ and $\varrho$, $\alpha<\varrho<\kappa$ and $i<\lambda\cdot\lambda$ define $$\mathcal R^E(\alpha,\varrho,i)=\bigcup_{i< j\leq \lambda\cdot\lambda}\{\eta:[i,j)\rightarrow[\alpha,\varrho)\mid\eta \text{ strictly increasing }\lambda\text{ -cofinal to }E\}.$$

\begin{defn}\label{constants_trees}
If $\alpha<\varrho<\kappa$ and $\alpha,\varrho,\rho\neq 0$, let $\{{}^EZ^{\alpha,\varrho}_\rho|\rho<\kappa\}$ be an enumeration of all downward closed subtrees of $\mathcal R^E(\alpha,\varrho,i)$ for all $i$, in such a way that each possible coloured tree appears cofinally often in the enumeration. Let ${}^EZ^{0,0}_0$ be the tree $(\mathcal R^E_f,r^E_f)$.
\end{defn}

This enumeration is possible because there are at most $$|\bigcup_{i<\lambda\cdot\lambda}\mathcal{P}(\mathcal R^E(\alpha,\varrho,i))|\leq (\lambda\cdot\lambda)\times\kappa=\kappa$$ downward closed coloured subtrees. Since for all $\alpha<\varrho<\kappa$, $|\mathcal R^E(\alpha,\varrho,i)|<\kappa$ there are at most $\kappa\times \kappa^{<\kappa}=\kappa$ coloured trees.
Denote by $Q({}^EZ^{\alpha,\varrho}_\rho)$ the unique ordinal $i$ such that ${}^EZ^{\alpha,\varrho}_\rho\subset \mathcal R^E(\alpha,\varrho,i)$.

\begin{defn}\label{treeconst}

Define for each $H\in 2^\kappa$ and stationary $E\subseteq \cf(\lambda)$, the coloured tree $(J^E_H,c^E_H)$ by the following construction.

For every $H\in 2^\kappa$ and  stationary $E\subseteq \cf(\lambda)$, define $J^E_H=(J^E_H,c^E_H)$ as the tree of all $\eta: s\rightarrow (\lambda\cdot\lambda)\times \kappa^4$, where $s\leq \lambda\cdot\lambda$, ordered by end extension, and such that the following conditions hold for all $i,j<s$:

Denote by $\eta_i$, $1<i\leq5$, the functions from $s$ to $\kappa$ that satisfies, $$\eta(n)=(\eta_1(n),\eta_2(n),\eta_3(n),\eta_4(n),\eta_5(n)).$$
\begin{enumerate}
\item $\eta\restriction n\in J^E_H$ for all $n<s$.
\item $\eta$ is strictly increasing with respect to the lexicographical order on $\lambda\cdot\lambda\times \kappa^4$.
\item $\eta_1(i)\leq \eta_1(i+1)\leq \eta_1(i)+1$.
\item $\eta_1(i)=0$ implies $\eta_2(i)=\eta_3(i)=\eta_4(i)=0$.
\item $\eta_2(i)\ge\eta_3(i)$ implies $\eta_2(i)=0$.
\item $\eta_1(i)<\eta_1(i+1)$ implies $\eta_2(i+1)\ge \eta_3(i)+\eta_4(i)$.
\item For every limit ordinal $\alpha$, $\eta_k(\alpha)=sup_{\iota<\alpha}\{\eta_k(\iota)\}$ for $k\in \{1,2\}$.
\item If $s<\lambda\cdot\lambda$ and $\cf(s)=\lambda$, then $sup(rang(\eta_5))\in E$.
\item $\eta_1(i)=\eta_1 (j)$ implies $\eta_k (i)=\eta_k (j)$ for $k\in \{2,3,4\}$.
\item If for some $k<\lambda\cdot\lambda$, $[i,j)=\eta_1^{-1}\{k\}$, then $$\eta_5\restriction {[i,j)}\in {}^EZ^{\eta_2(i),\eta_3(i)}_{\eta_4(i)}.$$
\noindent Note that 9 implies ${}^EZ^{\eta_2(i),\eta_3(i)}_{\eta_4(i)}\subset \mathcal R^E(\alpha,\varrho,i)$ 
\item If $s=\lambda\cdot\lambda$, then either 
\begin{itemize}
\item [(a)] there exists an ordinal number $m$ such that for every $k<m$, $\eta_1(k)<\eta_1(m)$, for every $k' \ge m$, $\eta_1(k)=\eta_1(m)$, and the color of $\eta$ is determined by ${}^EZ^{\eta_2(m),\eta_3(m)}_{\eta_4(m)}$: $$c^E_H(\eta):=c(\eta_5\restriction {[m,\lambda\cdot\lambda)})$$ where $c$ is the colouring function of $^EZ^{\eta_2(m),\eta_3(m)}_{\eta_4(m)}$;

\end{itemize}

or
\begin{itemize}
\item [(b)] there is no such ordinal $m$ and then $c^E_H(\eta):=H(\sup(\im(\eta_5)))$.
\end{itemize}
\end{enumerate}
\end{defn}

\begin{remark}\label{connection to sup of eta5}
    Since $\eta_5$ is increasing and $\sup(\rng(\eta_3))\ge \sup(\rng(\eta_5))\ge \sup(\rng(\eta_2))$, $\sup(\rng(\eta_2))\ge \sup(\rng(\eta_3))$ and $\sup(\rng(\eta_2))\ge \sup(\rng(\eta_4))$, this leads us to 
$$
\sup(\rng(\eta_4))\leq \sup(\rng(\eta_3))=\sup(\rng(\eta_5))=\sup(\rng(\eta_2)).
$$
Thus, if $\rng(\eta_1)=\lambda\cdot\lambda$ and for $\delta<\kappa$ and $k<5$ are
    such that $\sup(\rng(\eta_k))=\delta$, 
    then also $\sup(\rng(\eta_5))=\delta$.
\end{remark}

Let $S_0,S_1\s S^{\kappa}_{\lambda}$ be a partition of $S^{\kappa}_{\lambda}$ into two disjoint stationary sets, and let $S_1=\uplus_{i<\kappa} E_{i}$ be a partition of $S_1$ into stationary sets. For all $f:\kappa\rightarrow \kappa$ and $\sigma<\kappa$ define 
$$S^f_{\sigma}:=\begin{cases}f^{-1}\{\sigma\}\cap S_0&\text{if }f^{-1}\{\sigma\}\cap S_0\text{ is stationary };\\
E_{\sigma}&\text{Otherwise }.\end{cases}$$

\begin{fact}\label{equality_lemma}
    If $f,g$ are such that $f=^\kappa_{S_0}g$, then for all $\sigma<\kappa$, $S^f_\sigma\neq E_\sigma$ if and only if $S^g_\sigma\neq E_\sigma$.
\end{fact}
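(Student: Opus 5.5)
Fix $\sigma<\kappa$. We show that $f^{-1}\{\sigma\}\cap S_0$ is stationary if and only if $g^{-1}\{\sigma\}\cap S_0$ is stationary. By the definition of $S^f_\sigma$, the condition $S^f_\sigma\neq E_\sigma$ holds exactly when $f^{-1}\{\sigma\}\cap S_0$ is stationary. This needs one check: when $f^{-1}\{\sigma\}\cap S_0$ is stationary, the set $S^f_\sigma=f^{-1}\{\sigma\}\cap S_0$ is a nonempty subset of $S_0$. Since $E_\sigma\subseteq S_1$ and $S_0\cap S_1=\emptyset$, it cannot equal $E_\sigma$. In the other case $S^f_\sigma=E_\sigma$ by definition. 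The same holds for $g$. So the whole statement reduces to the stationarity equivalence above.

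For that equivalence we use $f=^\kappa_{S_0}g$. By Definition \ref{clubrel}, the set $D=\{\alpha<\kappa\mid f(\alpha)\neq g(\alpha)\}\cap S_0$ is non-stationary. Fix a club $C$ disjoint from $D$. Then $f$ and $g$ agree on $C\cap S_0$, so
$$f^{-1}\{\sigma\}\cap S_0\cap C=g^{-1}\{\sigma\}\cap S_0\cap C.$$
A subset $X\subseteq\kappa$ is stationary if and only if $X\cap C$ is stationary, because clubs are closed under finite intersections and the intersection of a stationary set with a club is again stationary. Hence $f^{-1}\{\sigma\}\cap S_0$ is stationary exactly when $g^{-1}\{\sigma\}\cap S_0$ is, which proves the equivalence and therefore the Fact.

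The only point requiring any care is the first step, namely that a stationary subset of $S_0$ is never equal to $E_\sigma$. This uses that $E_\sigma\subseteq S_1$ and that $S_0$ and $S_1$ are disjoint. The remaining steps are the standard club-filter manipulation.
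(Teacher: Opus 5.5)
Your proof is correct and follows essentially the paper's argument: you fix a club on which $f$ and $g$ agree inside $S_0$, and use it to transfer stationarity between $g^{-1}\{\sigma\}\cap S_0$ and $f^{-1}\{\sigma\}\cap S_0$. The paper argues one direction by contradiction and leaves implicit the check you make explicit ($E_\sigma\subseteq S_1$ and $S_0\cap S_1=\emptyset$, so a stationary subset of $S_0$ cannot equal $E_\sigma$); otherwise the substance is the same.
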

\begin{proof}
    Assume, towards a contradiction, that there are $f$ and $g$ such that $f=_{S_0}^\kappa g$, and there is $\sigma\in S_0$ such that $S^f_\sigma= E_\sigma$ and $S^g_\sigma\neq E_\sigma$. From the construction, $S_\sigma^g=g^{-1}\{\sigma\}\cap S_0$ is stationary. Thus there is a club $C\subseteq \kappa$ such that for all $\alpha\in C\cap S_0$, $f(\alpha)=g(\alpha)$. Therefore, $g^{-1}\{\sigma\}\cap S_0\cap C$ is stationary and for all $\alpha\in g^{-1}\{\sigma\}\cap S_0\cap C$, $f(\alpha)=g(\alpha)=\sigma$. We conclude that $f^{-1}\{\sigma\}\cap S_0$ is stationary, so $f^{-1}\{\sigma\}\cap S_0=S_\sigma^f =E_\sigma$, a contradiction.
\end{proof}

For all $f:\kappa\rightarrow \kappa$ and $\sigma<\kappa$ we define $(I^f_\sigma,c^f_\sigma)$ as the coloured tree $(J^{E_\sigma}_H,c^{E_\sigma}_H)$,
where $H\in 2^\kappa$ is the function that satisfies $$H(\alpha)=1\Leftrightarrow \alpha\in S^f_\sigma.$$  
We will write $E$ when it is clear that it is $E_\sigma$.

\begin{notation}
    For all $\sigma<\kappa$, denote by $\mathcal B(I^f_{\sigma})$ the set of leaves of $I^f_{\sigma}$.
    In addition,
    for all $\alpha<\kappa$ let us define $(I^f_\sigma)^\alpha=\{\eta\in I^f_\sigma| rang(\eta)\subset (\lambda\cdot\lambda)\times(\iota)^4\text{ for some }\iota<\alpha\}$.
\end{notation}

The following fact can be proved following the proof of \cite{HK} Claim 2.6 or \cite{HM} Claim 4.8, no changes are needed.

\begin{fact}\label{4.5.1}
Suppose $\xi\in (I_\sigma^f)^\alpha$ and $\eta\in I_\sigma^f$. If $\dom(\xi)$ is a successor ordinal smaller than $\lambda\cdot\lambda$, $\xi\subsetneq \eta$ and for every $k$ in $\dom(\eta)\backslash \dom(\xi)$, $\eta_1(k)=\xi_1(max(\dom(\xi)))$ and $\eta_1(k)>0$, then $\eta\in (I_\sigma^f)^\alpha$.
\end{fact}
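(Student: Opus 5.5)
The plan is to argue by induction on $\dom(\eta)$, since $\dom(\eta)\le\lambda\cdot\lambda$ and, by condition (1) of Definition \ref{treeconst}, every initial segment of $\eta$ lies in $I^f_\sigma$ and satisfies the same hypotheses. Fix $\iota<\alpha$ with $\rng(\xi)\subset(\lambda\cdot\lambda)\times\iota^4$. Let $m=\max(\dom(\xi))$ and $k_0=\xi_1(m)>0$.

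First I will show that for every $k\in\dom(\eta)\setminus\dom(\xi)$ the coordinates $\eta_2(k),\eta_3(k),\eta_4(k)$ coincide with $\xi_2(m),\xi_3(m),\xi_4(m)$, and so are below $\iota$. This follows from condition (9), because $\eta_1(k)=\eta_1(m)=k_0$. The first coordinate $\eta_1(k)=k_0<\lambda\cdot\lambda$ poses no problem. So the whole burden falls on $\eta_5$.

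Write $[i,j)=\eta_1^{-1}\{k_0\}$; note that $m\in[i,j)$. By condition (10), $\eta_5\restriction[i,j)\in{}^EZ^{\eta_2(i),\eta_3(i)}_{\eta_4(i)}$, and this is a subtree of $\mathcal R^E(\eta_2(i),\eta_3(i),i)$. Every function there maps into $[\eta_2(i),\eta_3(i))$, so $\eta_5(k)<\eta_3(i)=\xi_3(m)<\iota$ for all $k\in[i,j)$.

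The one point needing care is that $k_0>0$ is used so that (4) does not force the coordinates to be $0$. Without $k_0>0$ the block could be unconstrained, since ${}^EZ^{0,0}_0$ is the unbounded tree $\mathcal R^E_f$. Also, $\dom(\xi)$ being a successor guarantees that $m$ exists, so the block containing all new points is exactly the block of $m$.

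Combining these, $\rng(\eta)\subset(\lambda\cdot\lambda)\times\iota^4$, hence $\eta\in(I^f_\sigma)^\alpha$. The main obstacle I expect is only verifying that the new indices all lie in the single block $[i,j)$ and that condition (10) applies to that block even when $j=\dom(\eta)$ is not yet closed off. For this I would interpret $[i,j)$ as the block within $\dom(\eta)$, using downward closure of the $Z$ trees, and apply (10) to $\eta$ itself, possibly passing to an extension if needed.
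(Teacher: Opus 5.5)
Your argument is correct and is essentially the one the paper defers to (the Hyttinen--Kulikov claim it cites). Clause (9) freezes $\eta_2,\eta_3,\eta_4$ on the block $\eta_1^{-1}\{k_0\}$, and clause (10) keeps $\eta_5$ on that block below $\eta_3(m)=\xi_3(m)<\iota$. Since $\eta_1$ is non-decreasing by (2), that block is automatically an interval $[i,j)$ inside $\dom(\eta)$, so (10) applies to $\eta$ directly and neither the induction nor passing to an extension is needed.

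One justification needs repair. Clause (4) only goes one way, so $k_0>0$ does not by itself exclude ${}^EZ^{0,0}_0=\mathcal R^E_f$. What excludes it is that members of $\mathcal R^E_f$ have ordinal domain, so they cannot equal $\eta_5\restriction[i,j)$ when $i>0$. The degenerate case $i=0$, that is, $\eta_1(0)=k_0>0$ with $(\eta_2,\eta_3,\eta_4)(0)=(0,0,0)$, is not ruled out by the listed clauses, and the statement fails there. It must be excluded by a convention such as $\eta_1(0)=0$, which clause (7) yields if $0$ is counted as a limit ordinal.
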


\begin{lemma}
    If $f,g$ are such that $f=_{S_0}^\kappa g$, then for all $\sigma<\kappa$, $I^f_{\sigma}\cong I^g_{\sigma}$. 
\end{lemma}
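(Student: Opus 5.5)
The approach is to reduce the statement to the classical fact about Hyttinen–Kulikov coloured trees: if $H, H'\in 2^\kappa$ satisfy $H=^2_{E}H'$ (they differ only on a non-stationary subset of $E$), then $J^E_H\cong J^E_{H'}$. Fix $\sigma<\kappa$ and split into two cases according to Fact \ref{equality_lemma}.

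\emph{Case 1: $S^f_\sigma=E_\sigma$.} Then by Fact \ref{equality_lemma} also $S^g_\sigma=E_\sigma$. So both trees are $J^{E_\sigma}_H$ for the same $H$, namely the characteristic function of $E_\sigma$, and they are literally equal.

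\emph{Case 2: $S^f_\sigma\neq E_\sigma$.} Then $S^g_\sigma\neq E_\sigma$ as well. Hence $S^f_\sigma=f^{-1}\{\sigma\}\cap S_0$ and $S^g_\sigma=g^{-1}\{\sigma\}\cap S_0$. Since $f=^\kappa_{S_0}g$, there is a club $C$ on which $f$ and $g$ agree at all points of $S_0\cap C$. Therefore $S^f_\sigma\cap C=S^g_\sigma\cap C$, and the characteristic functions $H_f,H_g$ agree on $C$.

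It remains to show that $H_f\restriction C=H_g\restriction C$ for a club $C$ implies $J^{E}_{H_f}\cong J^{E}_{H_g}$. Here I would run the standard back-and-forth argument of \cite{HK} (Theorem 2.8 / Lemma there) and \cite{HM}, with $E=E_\sigma$ as the cofinality-restricting set:
\begin{itemize}
\item Choose the filtrations $(J_{H_f})^\alpha$ and $(J_{H_g})^\alpha$ given in the Notation.
\item Build by induction on $\alpha\in C'$ (a suitable club refining $C$ and closure points of the enumeration in Definition \ref{constants_trees}) an increasing chain of isomorphisms $F_\alpha:(J_{H_f})^\alpha\to (J_{H_g})^\alpha$.
\item At limits, take unions. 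The leaves added at a limit stage $\alpha$ with no ordinal $m$ as in clause 11(a) have colour $H(\alpha)$ by 11(b) and Remark \ref{connection to sup of eta5}. Since $\alpha\in C$, these colours agree.
\item At successor stages, extend $F_\alpha$ to the next level. For each branch of $(J_{H_f})^\alpha$ and each new piece living in some ${}^EZ^{\alpha,\varrho}_\rho$, use the fact that every coloured subtree appears cofinally often in the enumeration. This lets us find a matching $\rho'$ on the other side with an isomorphic coloured subtree and $\eta_4$ large enough. Fact \ref{4.5.1} guarantees that these extensions stay inside the next filtration level and that everything of the appropriate form is covered.
\end{itemize}

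The main obstacle is the successor step: one must check that leaves whose colour is determined by clause 11(a) are matched with the correct colour, and that each level is exhausted in a back-and-forth manner while preserving clauses 3–10 (notably 6, so that $\eta_2(i+1)\ge\eta_3(i)+\eta_4(i)$ does not block the choice of the new $\rho'$). Since the construction only replaces $2$ colours by the set $E$ and does not change the tree combinatorics, I expect the argument of \cite{HK} Claim 2.6 / \cite{HM} to go through verbatim. Combining the two cases yields $I^f_\sigma\cong I^g_\sigma$ for all $\sigma$.
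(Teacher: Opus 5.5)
Your proposal follows the paper's proof: the same case split via Fact~\ref{equality_lemma}, literal equality of the trees when $S^f_\sigma=E_\sigma$, and otherwise agreement of the two colouring functions on a club followed by an appeal to the Hyttinen--Kulikov back-and-forth (the paper simply cites \cite{HK} and \cite{Mor23} for this step, so your sketch is more detailed than the paper's). One correction: the ``classical fact'' as phrased in your first sentence, with $H$ and $H'$ differing only on a non-stationary subset of $E$, is the wrong hypothesis. In Case~2 the sets $S^f_\sigma,S^g_\sigma\subseteq S_0$ are disjoint from $E_\sigma$, so $H_f\restriction E_\sigma=H_g\restriction E_\sigma$ always holds. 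That version would therefore make all such trees isomorphic. However, you actually prove and use agreement on a club $C$, which is what the back-and-forth needs, so the argument is unaffected.
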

\begin{proof}
    Suppose that $f,g$ are such that $f=^\kappa_{S_0}g$. Let $C$ be a club such that $C\cap S_0\subseteq\{\alpha<\kappa\mid f(\alpha)=g(\alpha)\}$. 
    If $\sigma<\kappa$ is such that $S^f_\sigma=E_\sigma$, then by Lemma \ref{equality_lemma}, $S^g_\sigma=E_\sigma$ and $I^f_{\sigma}\cong I^g_{\sigma}$. We are missing the case when $\sigma<\kappa$ is such that $S^f_\sigma,S^g_\sigma\neq E_\sigma$. Since $S^f_\sigma,S^g_\sigma\neq E_\sigma$, $S^f_\sigma$ and $S^g_\sigma$ are stationary. Thus $f$ and $g$ are equivalent modulo $S^f_\sigma$ and $S^g_\sigma$. The proof follows as in \cite{HK} and \cite{Mor23}.

\end{proof}

\begin{defn}\label{Shtree}
Let $\gamma$ be an ordinal smaller than $\kappa$. Let $K_{tr}^\gamma$ be the class of models $(A,\prec, (P_n)_{n\leq \gamma},<, \wedge)$, where:
\begin{enumerate}
\item There is a linear order $(\mathcal{I},<_\mathcal{I})$ such that $A\subseteq \mathcal{I}^{\leq\gamma}$.
\item $A$ is closed under initial segment.
\item $\prec$ is the initial segment relation.
\item $\wedge(\eta,\xi)$ is the maximal common initial segment of $\eta$ and $\xi$.
\item Let $lg(\eta)$ be the length of $\eta$ (i.e. the domain of $\eta$) and $P_n=\{\eta\in A\mid lg(\eta)=n\}$ for $n\leq \gamma$.
\item For every $\eta\in A$ with $lg(\eta)<\gamma$, define $Suc_A(\eta)$ as $\{\xi\in A\mid \eta\prec \xi\ \&\ lg(\xi)=lg(\eta)+1\}$. $<$ is $\bigcup_{\eta\in A}(<\restriction Suc_A(\eta))$, i.e. if $\xi<\zeta$, then there is $\eta\in A$ such that $\xi,\zeta\in Suc_A(\eta)$.
\item For all $\eta\in A\backslash P_\gamma$, $<\restriction Suc_A(\eta)$ is the induced linear order from $\mathcal{I}$, i.e. $$\eta^\frown \langle x\rangle < \eta^\frown \langle y\rangle \Leftrightarrow x<_\mathcal{I} y.$$
\item If $\eta$ and $\xi$ have no immediate predecessor  and $\{\zeta\in A\mid \zeta\prec\eta\}=\{\zeta\in A\mid \zeta\prec\xi\}$, then $\eta=\xi$.
\end{enumerate}
\end{defn}

For all $f:\kappa\rightarrow \kappa$ and $\sigma<\kappa$, we will use $I^f_\sigma$ to construct $A^f_\sigma$, an element of $K_{tr}^{\lambda\cdot\lambda}$.
Let us denote by $\acc(\kappa)=\{\alpha<\kappa\mid \alpha=0~ \text{or}~ \alpha \text{ is a limit ordinal}\}$. For all $\alpha\in \acc(\kappa)$ and  $\eta\in (I_\sigma^f)^\alpha$ with $\dom(\eta)=m<\lambda\cdot\lambda$ define 
$$W_\eta^\alpha=\{\zeta\mid \dom(\zeta)=[m,s), m\leq s\leq \lambda\cdot\lambda, \eta^\frown\zeta\in (I_\sigma^f)^{\alpha+\omega}, \eta^\frown \langle m,\zeta(m)\rangle\notin (I_\sigma^f)^\alpha\}.$$ 
Notice that by the way $I_\sigma^f$ was constructed, for every $\eta\in I_\sigma^f$ with domain smaller than $\lambda\cdot\lambda$ and $\alpha<\kappa$, the set $$\{(\vartheta_1,\vartheta_2,\vartheta_3,\vartheta_4,\vartheta_5)\in (\lambda\cdot\lambda\times\kappa^4)\backslash (\lambda\cdot\lambda\times\alpha^4)\mid \eta^\frown (\vartheta_1,\vartheta_2,\vartheta_3,\vartheta_4,\vartheta_5)\in (I_\sigma^f)^{\alpha+\omega}\}$$ is either empty or has size $\omega$.
Let $\varsigma_\eta^\alpha$ be an enumeration of this set, when this set is not empty.

Let us denote by $\mathbb{T}=(\kappa\times \omega\times \acc(\kappa)\times (\lambda\cdot\lambda)\times
\kappa\times\kappa\times
\kappa\times\kappa)^{\leq\lambda\cdot\lambda}$. 
For every $\xi \in \mathbb{T}$ there are functions $\{\xi_i\in \kappa^{\leq \lambda\cdot\lambda}\mid 0<i\leq 8\}$ such that for all $i\leq 8$, $\dom(\xi_i)=\dom(\xi)$ and for all $n\in \dom(\xi)$, $\xi(n)=(\xi_1(n),\xi_2(n),\xi_3(n),\xi_4(n),\xi_5(n),\xi_6(n),\xi_7(n),\xi_8(n))$. 
For every $\xi\in \mathbb{T}$ let us denote $(\xi_4,\xi_5,\xi_6,\xi_7,\xi_8)$ by $\overline{\xi}$.

\begin{defn}\label{Gammas}
For all $\alpha\in \acc(\kappa)$ and  $\eta\in \mathbb{T}$ with $\overline{\eta}\in I_\sigma^f$
, $\dom(\eta)=m<\lambda\cdot\lambda$ define
$\Gamma_\eta^\alpha$ as follows:

If $\overline{\eta}\in (I_\sigma^f)^\alpha$, then $\Gamma_\eta^\alpha$ is the set of elements $\xi$ of $\mathbb{T}$ such that:

\begin{enumerate}
\item $\xi\restriction m=\eta,$
\item $\overline{\xi}\restriction \dom(\xi)\backslash m \in W^\alpha_\eta,$
\item $\xi_3$ is constant on $\dom(\xi)\backslash m,$
\item $\xi_3(m)=\alpha$,
\item for all $n\in \dom(\xi)\backslash m$, let $\xi_2(n)$ be the unique $r<\omega$ such that $\varsigma_{\zeta}^\alpha(r)=\overline{\xi}(n)$, where $\zeta=\overline{\xi}\restriction n$.
\end{enumerate}

If $\overline{\eta}\notin (I_\sigma^f)^\alpha$, then $\Gamma_\eta^\alpha=\emptyset$.

\end{defn}

For $\eta\in \mathbb{T}$ with $\overline{\eta}\in I_\sigma^f$, $\dom(\eta)=m<\lambda\cdot\lambda$ define $$\Gamma(\eta)=\bigcup_{\alpha\in \acc(\kappa)}\Gamma_\eta^\alpha .$$
Finally we can define $A^f_\sigma$ by induction. Let $\mathbb I_\sigma^f(0)=\{\emptyset\}$ and for all $n<\lambda\cdot\lambda$, $$\mathbb I_\sigma^f(n+1)=\mathbb I_\sigma^f(n)\cup\bigcup_{\eta\in \mathbb I_\sigma^f(n)~\dom(\eta)=n}\Gamma(\eta).$$ For $n\leq \lambda\cdot\lambda$ a limit ordinal, 
$$\bar{\mathbb I}_\sigma^f(n)=\bigcup_{m<n}\mathbb I_\sigma^f(m)$$
and
$$\mathbb I_\sigma^f(n)=\bar{\mathbb I}_\sigma^f(n)\cup \{\eta\in \mathbb{T}\mid \dom(\eta)=n\ \&\ \forall m<n\ (\eta\restriction m\in \bar{\mathbb I}_\sigma^f(n))\ \&\ \overline{\eta}\in I_\sigma^f\}.$$

For  $0<i\leq 8$ let us denote by $s_i(\eta)=sup\{\eta_i(n)\mid n<\lambda\cdot\lambda\}$ and $s_{\lambda\cdot\lambda}(\eta)=max\{s_i(\eta)\mid i\leq 8\}$. Finally $$A^f_\sigma=\mathbb I_\sigma^f(\lambda\cdot\lambda).$$ 

Define the color function $d_\sigma^f$ by 
\begin{equation*}
d_\sigma^f(\eta)=\begin{cases} c_\sigma^f(\overline{\eta}) &\mbox{if } s_1(\eta)< s_{\lambda\cdot\lambda}(\eta)\\
H(s_1(\eta)) & \mbox{if } s_1(\eta)= s_{\lambda\cdot\lambda}(\eta). \end{cases}
\end{equation*}
Recall that $H$ is the function used to construct $I^f_\sigma$, i.e. $(I^f_\sigma,c^f_\sigma)=(J^E_H,c^E_H)$.
It is clear that $A^f_\sigma$ is closed under initial segments, indeed the relations $\prec$, $(P_n)_{n\leq\lambda\cdot\lambda}$, and $\wedge$ of Definition \ref{Shtree} have a canonical interpretation in $A^f_\sigma$. 

We are missing to define $<\restriction Suc_{A^f_\sigma}(\eta)$ for all $\eta\in A^f_\sigma$ with domain smaller than $\lambda\cdot\lambda$. 
From \cite{Mor23} Remark 3.15 and Theorem 3.16, for all $\varepsilon<\kappa$, there is a linear order $\mathcal I$ such that:
\begin{itemize}
    \item $\mathcal I$ is $\varepsilon$-dense, $(<\kappa)$-stable, and $(\kappa, \varepsilon)$-nice (see below). 
    \item For all $\eta\in A^f_\sigma$, $<\restriction Suc_{A^f_\sigma}(\eta)$ is isomorphic to $\mathcal I$.
    \item If $f,g$ and $\sigma$ are such that $I^f_\sigma\cong I^g_\sigma$, then for all $\sigma<\kappa$, $A^f_{\sigma}\cong A^g_{\sigma}$. 
\end{itemize}

\begin{defn}
Let $\varepsilon<\kappa$ be a regular cardinal, $A$ be a linear order of size $\kappa$ and $\langle A_\alpha\mid\alpha<\kappa\rangle$ a filtration. Then $A$ is \textit{$(\kappa, \varepsilon)$-nice} if
there is a club $C\subseteq \kappa$, such that for all limit $\delta\in C$ with $cf(\delta)\ge\varepsilon$, for all $x\in A$ there is $\beta<\delta$ such that one of the following holds:
\begin{itemize}
\item $\forall\sigma\in A_\delta [\sigma\ge x \Rightarrow \exists\sigma'\in A_\beta\ (\sigma\ge \sigma'\ge x)]$
\item $\forall\sigma\in A_\delta [\sigma\leq x \Rightarrow \exists\sigma'\in A_\beta\ (\sigma\leq \sigma'\leq x)]$
\end{itemize}

\end{defn}

For any $\mathcal{L}$-structure $\mathcal{A}$ we denote by \textit{bs} the set of basic formulas of $\mathcal{L}$ (atomic formulas and negation of atomic formulas). For all $\mathcal{L}$-structure $\mathcal{A}$, $a\in \mathcal{A}$, and $B\subseteq \mathcal{A}$ we define $$tp_{bs}(a,B,\mathcal{A})=\{\varphi(x,b)\mid \mathcal{A}\models \varphi(a,b),\varphi\in bs,b\in B\}.$$
Similarly we define $tp_{at}(a,B,\mathcal{A})$ for atomic formulas.

\begin{defn}
A linear order $A$ is \textit{$(<\kappa)$-stable} if for every $B\subseteq A$ of size smaller than $\kappa$, $$\kappa>|\{tp_{bs}(a,B,A)\mid a\in A\}|.$$

\end{defn}

\begin{defn}
Let $I$ be a linear order of size $\kappa$ and $\varepsilon$ a regular cardinal smaller than $\kappa$. We say that $I$ is \textit{$\varepsilon$-dense} if the following holds.

\textit{If $A,B\subseteq I$} are subsets of size less than $\varepsilon$ such that for all $a\in A$ and $b\in B$, $a<b$, then there is $c\in I$, such that for all $a\in A$ and $b\in B$, $a<c<b$.

\end{defn}

Let us define the following filtration:
$$
(A^f_\sigma)^\alpha=\{\eta\in A^f_\sigma\mid \im(\eta)\subseteq \vartheta\times\omega\times\vartheta\times(\lambda\cdot\lambda)\times\vartheta^4\text{ for some }\vartheta<\alpha\}.
$$

Notice that for all $f,g\in \kappa^\kappa$ and $\alpha,\sigma<\kappa$, $f\restriction\alpha=g\restriction\alpha$ if and only if $(I^f_\sigma)^\alpha=(I^g_\sigma)^\alpha$. Thus, $f\restriction\alpha=g\restriction\alpha$ if and only if $(A^f_\sigma)^\alpha=(A^g_\sigma)^\alpha$.

Let us construct the tree $T^f$.
The construction shall use the following definition.
\begin{defn}
    Let $\langle (T_\sigma,\prec_{\sigma})\mid\sigma<\kappa\rangle$ be a sequence of trees of height $\alpha<\kappa$. \emph{ The disjoint union tree} denoted by $\bigvee_{\sigma<\kappa} T_{\sigma}:=((\biguplus_{\sigma<\kappa} T_{\sigma}\times\{\sigma\})\cup\{\emptyset\},\prec)$
    where $t\prec s$ if, and only if either $t=\emptyset$ or there exists $\sigma<\kappa$ such that 
    $t,s\in T_{\sigma}$ and $t\prec_{\sigma} s$. 

    Briefly, we shall define for each $\sigma<\kappa$ a $\kappa^+$, $(\lambda\cdot\lambda)+2$ tree $T^f_{\sigma}$ which will witness the fact $f^{-1}\{\sigma\}$ is stationary and then take their disjoint union 
    as our colored tree of interest. 
    
\end{defn}

For each $\sigma<\kappa$ let $$T_{\sigma}^f=A^f_\sigma\backslash \{\eta\in A^f_\sigma\mid \dom(\eta)=\lambda\cdot\lambda\ \&\ d^f_\sigma(\eta)=0\}$$ from the previous results, if $f,g$ are such that $f=^\kappa_{S_0}g$, then for all $\sigma<\kappa$, $T^f_{\sigma}\cong T^g_{\sigma}$.
Let $T^f:=\bigvee_{\sigma<\kappa} T^f_{\sigma}$. Clearly if $f,g$ are such that $f=^\kappa_{S_0}g$, then $T^f\cong T^g$.

\begin{notation}\label{notation1}
    For all $\sigma<\kappa$, denote by $\mathcal B(T^f_{\sigma})$ the set of leafs of $T^f_{\sigma}$. Similar $\mathcal B(T^f)$ is the set of leafs of $T^f$. 
    In addition, for all $\alpha<\kappa$ and for all $\sigma<\kappa$, let us define $(T^f_\sigma)^\alpha=T^f_\sigma\cap(A^f_\sigma)^\alpha$ and $(T^f)^\alpha=\bigcup_{\sigma<\alpha}(T^f_\sigma)^\alpha$.
\end{notation}

\begin{defn} Let $\varepsilon<\kappa$ be a regular cardinal.
\begin{itemize} 
\item $A\in K^{\lambda\cdot\lambda}_{tr}$ of size at most $\kappa$, is \textit{locally $(\kappa,\varepsilon)$-nice} if for every $\eta\in A\backslash P^A_{\lambda\cdot\lambda}$, $(Suc_A(\eta),<)$ is $(\kappa,\varepsilon)$-nice, $Suc_A(\eta)$ is infinite, and there is $\xi\in P_{\lambda\cdot\lambda}^A$ such that $\eta\prec\xi$.
\item $A\in K^{\lambda\cdot\lambda}_{tr}$ is \textit{$(<\kappa)$-stable} if for every $B\subseteq A$ of size smaller than $\kappa$, $$\kappa>|\{tp_{bs}(a,B,A)\mid a\in A\}|.$$
\end{itemize}
\end{defn}

\begin{lemma}\label{representation_tree}
    For any $f\in \kappa^\kappa$, $T^f$ is a locally $(\kappa, \varepsilon)$-nice and $(<\kappa)$-stable ordered tree and satisfies: 
    If $f,g$ are such that $f=_{S_0}^\kappa g$, then $T^f\cong T^g$.
\end{lemma}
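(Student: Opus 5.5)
The plan is to reduce the statement to the corresponding properties of the components $T^f_\sigma$, and then check that the disjoint union construction $\bigvee_{\sigma<\kappa}$ preserves them. I will proceed in three steps: isomorphism invariance, local niceness, and $(<\kappa)$-stability.

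\emph{Isomorphism invariance.} Suppose $f=^\kappa_{S_0}g$. By Fact \ref{equality_lemma} and the lemma following Fact \ref{4.5.1}, we have $I^f_\sigma\cong I^g_\sigma$ for every $\sigma<\kappa$. By the cited results of \cite{Mor23} (Remark 3.15, Theorem 3.16), this lifts to $A^f_\sigma\cong A^g_\sigma$ as coloured trees. Such an isomorphism preserves the colour $d$, so it restricts to $T^f_\sigma\cong T^g_\sigma$. Fix an isomorphism $F_\sigma$ for each $\sigma$. Then the map sending $\emptyset\mapsto\emptyset$ and $(t,\sigma)\mapsto(F_\sigma(t),\sigma)$ is an isomorphism $T^f\cong T^g$. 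It respects $\prec$, $\wedge$ and the successor orders, because these relations are computed componentwise, apart from the new root. The level predicates shift uniformly by one.

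\emph{Local $(\kappa,\varepsilon)$-niceness.} Let $\eta$ be a non-leaf node of $T^f$. If $\eta$ lies inside some $T^f_\sigma$, then $Suc(\eta)$ is the same as in $A^f_\sigma$. By the choice of $\mathcal I$ it is isomorphic to the $\varepsilon$-dense, $(\kappa,\varepsilon)$-nice order $\mathcal I$, and in particular it is infinite. Removing the leaves of colour $0$ does not affect successor sets below height $\lambda\cdot\lambda$.

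Two points remain. First, every node must extend to a leaf of $T^f_\sigma$, i.e.\ a leaf of colour $1$. For this I will use that the trees ${}^EZ^{\alpha,\varrho}_\rho$ enumerate every coloured tree cofinally often. Given $\eta$, I choose a fresh block ($\eta_1$ incremented) whose $Z$-tree contains a branch coloured $1$ and extend $\eta$ along it; case (a) of clause 11 then gives colour $1$, and $d$ agrees with $c$ since $s_1<s_{\lambda\cdot\lambda}$. Second, the successors of the new root $\emptyset$ are the $\kappa$ roots $\{(\emptyset,\sigma)\}$. The paper's definition of $<$ only constrains siblings, so I order this set by a copy of $\mathcal I$ indexed by $\sigma$. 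With the filtration $(T^f)^\alpha$ of Notation \ref{notation1}, niceness at the root then holds on the club where the indexing is coherent.

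\emph{$(<\kappa)$-stability.} Let $B\subseteq T^f$ with $|B|<\kappa$. Only fewer than $\kappa$ components $\sigma$ meet $B$. All elements of components not meeting $B$ realize at most $|\lambda\cdot\lambda|$ basic types over $B$, since they are incomparable with $B$, their meet with $B$ is the root, and only their level matters. Within a component meeting $B$, the type count is bounded by the $(<\kappa)$-stability of $A^f_\sigma$, which follows from $(<\kappa)$-stability of $\mathcal I$ as in \cite{Mor23}: a basic type over $B$ is determined by $\wedge$ with elements of the closure of $B$ under $\wedge$ and by cuts in successor orders. By inaccessibility, a union of fewer than $\kappa$ sets each of size less than $\kappa$ has size less than $\kappa$.

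I expect the main obstacle to be the niceness at the new root and the verification that leaf-removal leaves every node extendable. These are exactly the places where $T^f$ differs from the trees handled in \cite{Mor23}.
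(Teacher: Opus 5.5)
Your proof is correct at the paper's level of detail. The one step the paper actually proves is handled differently: that every node lies below a top-level leaf surviving the deletion of colour-$0$ leaves.

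\textbf{The paper's route.} It closes the branch with a case (b) leaf. Via clause (8) of Definition~\ref{treeconst}, the task reduces to finding an increasing $\lambda$-sequence in $E_\sigma$ whose supremum lies in $S^f_\sigma$. Such a sequence exists because both sets are stationary and $E_\sigma$ either equals $S^f_\sigma$ or is disjoint from it. The resulting leaf gets colour $H(\sup)=1$.

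\textbf{Your route.} You end the branch inside a single fresh block (case (a) of clause (11)), using only that every coloured tree occurs cofinally often among the ${}^{E}Z^{\alpha,\varrho}_\rho$. This argument is uniform in $f$ and never uses $S^f_\sigma$. It does need two facts you should state explicitly:
\begin{enumerate}
\item For some $\varrho<\kappa$, $\mathcal R^{E_\sigma}(\alpha,\varrho,m)$ contains a map with domain $[m,\lambda\cdot\lambda)$. To get one, pick $\delta\in E_\sigma\cap\acc(E_\sigma)$ above $\alpha$ and build the map below $\delta$, using that every point of $E_\sigma\subseteq S^\kappa_\lambda$ has cofinality $\lambda$.
\item The lift of the branch to $A^f_\sigma$ can be chosen with $s_1<s_{\lambda\cdot\lambda}$, so that $d^f_\sigma$ really returns the $Z$-colour. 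You assert this rather than arrange it.
\end{enumerate}

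\textbf{What each buys.} The paper's route shows more: the coding leaves with supremum in $S^f_\sigma$ are dense. That is the kind of branch used later, when $\delta$ and $\eta$ satisfying I.--V. are chosen in Lemma~\ref{iso_direction}. Your route suffices for the lemma but gives only the noise leaves.

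You also cover things the paper skips. You specify an order on $Suc(\emptyset)$ in $\bigvee_{\sigma}T^f_\sigma$, which is needed for membership in $K^{\lambda\cdot\lambda}_{tr}$ and for niceness at the new root; fix it independently of $f$ so the glued isomorphism preserves it.

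For $(<\kappa)$-stability, your componentwise count is more than needed. It also overlooks that component roots are siblings and so realize cuts over $B$. Since $\kappa$ is strongly inaccessible, there are at most $2^{|B|+|\lambda\cdot\lambda|+\aleph_0}<\kappa$ basic types over $B$, so stability is immediate.

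One point is implicit in both your argument and the paper's. A fresh block can only be opened at a successor position, or at a limit position where $\eta_1$ is unbounded below it. If $\dom(\eta)$ is a limit inside a block, clause (7) forces $\eta_1$ to stay constant. You then need the current block's $Z$-tree to allow at least one more step before you can start the new block.
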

\begin{proof}
    It is clear that the only thing left to prove is that for all $\eta\in T^f\backslash P^{T^f}_{\lambda\cdot\lambda}$ there is $\xi\in P_{\lambda\cdot\lambda}^{T^f}$ such that $\eta\prec\xi$. From Definition \ref{treeconst} (8), is enough to show that for all $\alpha<\kappa$ there is an increasing sequence $\langle \beta_i\mid i<\lambda\rangle$ such that from all $i<\lambda$, $\beta_i\in E_\sigma$ and $\bigcup_{i<\lambda}\beta_i\in S_\sigma^f$. This follows from the fact that $S_\sigma^f$ is a stationary set for all $\sigma<\kappa$ and $E_\sigma$ is a stationary set either equal to $S_\sigma^f$ or disjoint to $S_\sigma^f$. 
\end{proof}

The following rather simple lemma will prove itself useful for us. 

\begin{lemma}\label{the_tau}
    Let $\sigma<\beta<\kappa$ and $\eta$ be such that $\eta\in\mathcal B(T^f_{\sigma})\setminus (T^f)^{\beta}$. If there is no $\tau'<\lambda\cdot\lambda+1$ such that $\eta\restriction\tau'\in\acc((T^f)^{\beta})\setminus (T^f)^{\beta}$, then there exist $\tau<\lambda\cdot\lambda$ with the following properties:
    \begin{enumerate}
        \item $\eta\restriction\tau\in (T^f)^{\beta}$;
        \item $\eta\restriction(\tau+1)\notin (T^f)^{\beta}$;
        \item $\sup(\rng\eta_8\restriction\tau)<\beta$. 
    \end{enumerate}
\end{lemma}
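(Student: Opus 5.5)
The plan is to take $\tau$ to be the least ordinal such that $\eta\restriction(\tau+1)\notin (T^f)^\beta$, and then verify the three properties in turn.

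\emph{Existence of a least $\tau$.} The set $\{\tau'\le\lambda\cdot\lambda\mid \eta\restriction\tau'\notin (T^f)^\beta\}$ is nonempty, since $\eta=\eta\restriction(\lambda\cdot\lambda)\notin (T^f)^\beta$. It does not contain $0$, because the root (the empty sequence, which lies in every $(A^f_\sigma)^\alpha$) belongs to $(T^f)^\beta$. Let $\tau_0$ be its least element.

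\emph{$\tau_0$ is a successor.} Suppose $\tau_0$ were a limit. For every $\tau''<\tau_0$ we have $\eta\restriction\tau''\in (T^f)^\beta$, while $\eta\restriction\tau_0\notin (T^f)^\beta$. This means $\eta\restriction\tau_0$ is an accumulation point of $(T^f)^\beta$ that lies outside $(T^f)^\beta$, i.e. $\eta\restriction\tau_0\in\acc((T^f)^\beta)\setminus (T^f)^\beta$. That is exactly what the hypothesis forbids. So $\tau_0=\tau+1$ for some $\tau<\lambda\cdot\lambda$, and by minimality (1) and (2) hold. If $\tau_0=\lambda\cdot\lambda$ happened to be a limit, the same argument (with $\tau'=\lambda\cdot\lambda<\lambda\cdot\lambda+1$) excludes it. Hence $\tau<\lambda\cdot\lambda$ in all cases.

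\emph{Property (3).} This is where the filtration enters. Since $\eta\restriction\tau\in (T^f)^\beta$ and $\eta\in T^f_\sigma$ with $\sigma<\beta$, the definition of $(T^f)^\beta=\bigcup_{\sigma'<\beta}(T^f_{\sigma'})^\beta$ gives $\eta\restriction\tau\in (A^f_\sigma)^\beta$. Hence there is $\vartheta<\beta$ with
\[
\rng(\eta\restriction\tau)\subseteq\vartheta\times\omega\times\vartheta\times(\lambda\cdot\lambda)\times\vartheta^4 .
\]
In particular every value $\eta_8(n)$ with $n<\tau$ is $<\vartheta$, so $\sup(\rng\eta_8\restriction\tau)\le\vartheta<\beta$.

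\emph{Expected obstacle.} The only delicate point is matching the paper's notion of $\acc((T^f)^\beta)$ with "$\eta\restriction\tau_0$ is a limit-height node all of whose proper initial segments lie in $(T^f)^\beta$". I read it in that sense, which is what the hypothesis is designed to exclude. If instead $\acc$ is meant more restrictively (for instance, limit of elements of $(T^f)^\beta$ in the tree order along the branch), it still covers this situation, since all $\eta\restriction\tau''$ for $\tau''<\tau_0$ lie in $(T^f)^\beta$ and converge to $\eta\restriction\tau_0$. One also needs that the node indexing by $\sigma$ in the disjoint union does not interfere; this is handled by $\sigma<\beta$, which makes membership in $(T^f)^\beta$ coincide with membership in $(T^f_\sigma)^\beta$ for nodes above the root in the $\sigma$-th component.
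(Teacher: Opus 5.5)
Your proof is correct, but it locates $\tau$ differently from the paper. The paper sets $\tau:=\sup\{\tau'\le\lambda\cdot\lambda\mid\sup(\rng\eta_8\restriction\tau')<\beta\}$, so it finds the exit point of the branch through the eighth coordinate alone. It gets $\tau<\lambda\cdot\lambda$ from $\sup(\rng\eta_8)>\beta$. It then combines the hypothesis with the asserted equivalence ``$\sup(\rng\eta_8\restriction\tau)=\beta$ iff $\eta\restriction\tau\in\acc((T^f)^{\beta})\setminus(T^f)^{\beta}$'' to get (3), and reads off (1) and (2).

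You instead take the least $\tau_0$ with $\eta\restriction\tau_0\notin(T^f)^\beta$. The hypothesis rules out a limit $\tau_0$, including $\tau_0=\lambda\cdot\lambda$, so $\tau_0=\tau+1$ and (1), (2) hold by minimality. Property (3) is then immediate from the definition of $(A^f_\sigma)^\beta$, which bounds every coordinate, in particular $\eta_8$, by a single $\vartheta<\beta$.

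Your order of deduction is the more robust one. The paper's route needs the implication from ``$\eta_8$ stays below $\beta$ up to $\tau$'' to ``$\eta\restriction\tau\in(T^f)^\beta$'', and this is not automatic. Inside a block, the sixth coordinate (the bound $\varrho$ of the block) or the third (the level $\alpha$ of $\Gamma^\alpha_\eta$) can already be $\ge\beta$ while $\eta_8$ is still below $\beta$. So the first exit from $(T^f)^\beta$ can occur strictly before the first place where $\eta_8$ reaches $\beta$. Your membership-defined $\tau$ is the one for which (1) is guaranteed.

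Nothing is lost downstream. The later uses of the lemma, in Claim~\ref{claim of beta} and in the choice of $w^0,w^1$, rely only on the conclusions (1)--(3), not on how $\tau$ is defined. Your (3) gives exactly the bound on $\eta_8\restriction\tau$ that the paper's definition was meant to capture.
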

\begin{proof}
      Let $\beta$, $\sigma$, and $\eta$ be as above. Let us denote $\delta:=\sup(\rng(\eta_8))$. Since $\eta\in\mathcal B(T^f_{\sigma})\setminus (T^f)^{\beta}$, $\delta>\beta$.
      Let $\tau:=\sup\{\tau'<\lambda\cdot\lambda+1\mid \sup(\rng\eta_8\restriction\tau')<\beta\}$.

Notice that $\tau<\lambda\cdot\lambda$, otherwise $\delta\leq\beta$. On the other side $\sup(\rng\eta_8\restriction\tau)\leq\beta$. Clearly $\rng_8\eta\restriction\tau=\beta$ if and only if $\eta\restriction\tau\in\acc((T^f)^{\beta})\setminus (T^f)^{\beta}$. By our assumptions, there is no $\tau'<\lambda\cdot\lambda+1$ such that $\eta\restriction\tau'\in\acc((T^f)^{\beta})\setminus (T^f)^{\beta}$ , so (1) and (2) follows.

Clearly (2) follows from the way we chose $\tau$.
\end{proof}

\section{Ehrenfeucht-Mostowski models}
To construct model of non-classifiable theories we will adapt the ideas from \cite{Sh}, \cite{HT}, and \cite{Mor23} to our particular ordered colored trees.

\begin{defn}
Let $\Delta$ be a set of formulas. Let $A$ and $\mathcal{M}$ be models, and $X=\{\bar{a}_s\mid s\in A\}$ an indexed set of finite tuples of elements of $\mathcal{M}$. We say that \textit{$X$ is a set of indiscernibles in $\mathcal{M}$ relative to $\Delta$}, if the following holds:

If $\bar{s},\bar{s}'$ are $n$-tuples of elements of $A$ and $tp_{at}(\bar{s},\emptyset,A)=tp_{at}(\bar{s}',\emptyset,A)$, then $$tp_\Delta(\bar{a}_{\bar{s}},\emptyset,\mathcal{M})=tp_\Delta(\bar{a}_{\bar{s}'},\emptyset,\mathcal{M}).$$
Here and from now on, $\bar{s}=(s_0,\ldots ,s_n)$ is a tuple of elements of $A$, and $\bar{a}_{\bar{s}}$ denotes $\bar{a}_{s_0}{}^\frown \cdots ^\frown\bar{a}_{s_n}$.
\end{defn}

\begin{defn}[Ehrenfeucht-Mostowski models]\label{EM_def}
    Let $\mathcal T$ be a $L_{\omega\omega}$-theory of vocabulary $\tau$, $l$ a dense linear order, $\mathcal{M}$ a model of vocabulary $\tau^1$, and $\varphi(\bar{u}, \bar{v})$ a formula in some logic $\mathcal{L}$.

    We say that \textit{$\mathcal{M}$ is an Ehrenfeucht-Mostowski model of $\mathcal T$ for $l$}, where the order is definable by $\varphi$, if $\mathcal{M}\models \mathcal T$, $\tau\subseteq \tau^1$, and there is a natural number $n$ and $n$-tuples of elements $\bar{a}_x\in \mathcal{M}$, $x\in l$, such that the following hold:
    \begin{enumerate}
        \item Every element of $\mathcal{M}$ is of the form $\mu(\bar{a}_{x_1},\ldots , \bar{a}_{x_m})$, where $\mu$ is a $\tau^1$-term and $x_1<\cdots <x_m$.
        \item If $x,y\in l$, then $\mathcal{M}\models \varphi(\bar{a}_x,\bar{a}_y)$ if and only if $x<y$.
        \item If $\psi(\bar{u}_1\ldots , \bar{u}_m)$ is an atomic $\tau^1$-formula, $x_1<\cdots <x_m$ and $y_1<\cdots <y_m$, then $$\mathcal{M}\models \psi(\bar{a}_{x_1},\ldots , \bar{a}_{x_m})\text{ iff }\mathcal{M}\models \psi(\bar{a}_{y_1},\ldots , \bar{a}_{y_m}).$$
    \end{enumerate}
\end{defn}

Suppose $\mathcal T$ is a theory such that for each dense linear order $l$, $\mathcal T$ has an Ehrenfeucht-Mostowski model where the order is definable by an $L_{\infty\omega}$-formula. We will only consider linear orders of some fixed set $B$. Let $l_B$ be a dense linear order such that every linear order of $B$ is a submodel of $l_B$. Let $EM_1(l_B)$, $\tau^1$, $\varphi$, $n$, $(\bar{a}_x)_{x\in l_B}$ be such that the conditions of Definition \ref{EM_def} are satisfied for $l_B$.

If $l\subseteq l_B$ is dense, then we define $EM_1(l)$ as the submodel of $EM_1(l_B)$ generated by $\bar{a}_x$, $x\in l$. Notice that $EM_1(l)$, $\tau^1$, $\varphi$, $n$, $(\bar{a}_x)_{x\in l}$ satisfy the conditions of Definition \ref{EM_def} for $l$.

We call the linear order \textit{$l$ the index model} of $EM_1(l)$. The indexed set \textit{$(\bar{a}_x)_{x\in l}$ is the skeleton of $EM_1(l)$}, and the tuples $\bar{a}_x$, $x\in l$, are the generating elements of $EM_1(l)$. Let us denote $EM(l)=EM_1(l)\restriction \tau$.

Suppose $\mathcal T$ is a theory such that for each dense linear order $l$, $\mathcal T$ has an Ehrenfeucht-Mostowski model where the order is definable by an $L_{\infty\omega_1}$-formula, and $B$ contains only $\omega_1$-dense linear orders. Then we can define $EM_1(l)$ and $EM(l)$ for all $l\in B$ as above.

\begin{fact}[{\cite[Fact 4.9]{Mor23}}]\label{DOP2}
For all $f\in \kappa^\kappa$, $T^f$ is $\varepsilon$-homogeneous with respect to quantifier free formulas.
\end{fact}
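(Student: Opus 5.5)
We need to prove: for all $f$, $T^f$ is $\varepsilon$-homogeneous with respect to quantifier free formulas. That is, if $\bar a,\bar b$ are tuples of length $<\varepsilon$ in $T^f$ with the same quantifier-free type, then for every $c$ there is $d$ with $\bar a c$ and $\bar b d$ having the same quantifier-free type.

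The plan is a back-and-forth extension argument in the vocabulary of $K^{\lambda\cdot\lambda}_{tr}$, i.e.\ $\prec$, $P_n$, $<$ and $\wedge$. First I would reduce to the case where $\bar a$ and $\bar b$ are closed under $\wedge$ and under taking the relevant initial segments. Closing a tuple of size $<\varepsilon$ under $\wedge$ keeps its size below $\varepsilon$. Equality of quantifier-free types then gives a map $F\colon\bar a\to\bar b$ that is an isomorphism of the generated substructures. It preserves $\prec$, the levels $P_n$, the sibling order $<$, and meets.

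Given a new element $c$, I would consider $c'=$ the maximal element of the form $c\wedge a_i$, together with the node $c\restriction(\ell+1)$ where $\ell=lg(c')$. Adding $c$ amounts to specifying three things:
\begin{enumerate}
\item where $c'$ sits, namely a node of the closure of $\bar a$;
\item the cut that $c\restriction(\ell+1)$ determines among the $<$-ordered successors of $c'$ already present in $\bar a$;
\item the length of $c$ together with the limit-level requirement from Definition \ref{Shtree}(8).
\end{enumerate}
On the $\bar b$ side I would take $F(c')$ and find a successor $y$ of $F(c')$ realizing the image cut. This cut is determined by fewer than $\varepsilon$ elements, and $Suc_{T^f}(F(c'))\cong\mathcal I$ is $\varepsilon$-dense, so $y$ exists. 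If $c\restriction(\ell+1)$ coincides with some element of $\bar a$, I would just use its image.

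Next I would extend $y$ to a node $d$ of the same length as $c$. This uses local niceness from Lemma \ref{representation_tree}: every non-leaf node has an extension to a leaf in $P_{\lambda\cdot\lambda}$. Cutting that leaf at level $lg(c)$ gives $d$. The only new atomic facts are then $\prec$-relations and meets with $\bar b$, and these match by construction, since $d\wedge b_i=F(c\wedge a_i)$.

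The main obstacle is step (3) when $c$ is a leaf, i.e.\ of length $\lambda\cdot\lambda$. Leaves of $T^f$ exist only along branches whose $\eta_8$-supremum lands in the right stationary set with colour $1$, so the extension of $y$ must be chosen to be a leaf of $T^f$, not merely of $A^f_\sigma$. I would handle this exactly as in Lemma \ref{representation_tree}: build an increasing sequence through $E_\sigma$ with supremum in $S^f_\sigma$, using stationarity. For components indexed by $\sigma$ in the disjoint union, I would also note that the root's successors are the $\sigma$-copies. If the order there is only the trivial one, homogeneity still holds, since $\varepsilon<\kappa$ and infinitely many copies remain unused. This reproduces the argument of \cite[Fact 4.9]{Mor23}, applied componentwise.
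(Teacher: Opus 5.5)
The paper gives no argument for this statement; it imports it from \cite{Mor23}. Your extension step has a genuine gap. In step (1) you treat $c'$, the maximal $c\wedge a_i$, as an element of the $\wedge$-closure of $\bar a$. Later you claim that the only new atomic facts are $\prec$-relations and meets. Neither holds. The node $c\wedge a_i$ can be a proper initial segment of $a_i$ lying strictly between closure points, or $c$ itself can be such a segment. Then $F(c')$ is undefined, and the image is forced to be $b_i\restriction lg(c')$. This forced node can be a sibling of a point of the closure, and since $<$ in Definition~\ref{Shtree} only compares siblings, that relation is invisible in the type of $\bar a$.

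Concretely, take a non-leaf $r$ of some $T^f_\sigma$, elements $u<x<v$ in $Suc(r)$, $a_0\in Suc(u)$ and $b_0\in Suc(v)$. The tuples $(a_0,x)$ and $(b_0,x)$ have the same quantifier-free type: their $\wedge$-closures are $\{a_0,x,r\}$ and $\{b_0,x,r\}$, with the same levels and $\prec$-relations and no two elements siblings. Yet $c=u$ satisfies $c\prec a_0$ and $c<x$, while the only possible $d$ on the other side is $v$, and $v>x$. So the property as you formulated it is false for $T^f$, and no choice of $y$ or of leaf extension can rescue the argument. It can only work for a notion that compares types of downward closures (as in hypothesis (2) of Lemma~\ref{similar type lemmaf}) or includes lexicographic information, and you need to fix that notion first. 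Note also that closing under initial segments does not keep size below $\varepsilon$: the branch below a leaf has $\lambda\ge\varepsilon$ nodes.

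Even for such a notion, a case is missing when $\varepsilon>\omega$, which happens in the DOP case ($\varepsilon=\omega_1=\lambda$). For countable $\bar a$ the meets $c\wedge a_i$ need not have a $\prec$-maximum. They can form an $\omega$-chain inside the closure whose supremum is a limit level that $c$ reaches or passes. You then need the image chain $\langle F(c\wedge a_i)\rangle$ to have an upper bound in $T^f$ at that level. Neither the $\varepsilon$-density of the successor orders nor the leaf-extension property from Lemma~\ref{representation_tree} supplies one. This is not a formality: by Definition~\ref{treeconst}(10), a block $\eta_1^{-1}\{k\}$ starting at some $m<\omega$ can be governed by a tree ${}^{E}Z^{\alpha,\varrho}_{\rho}$ that contains no element with domain $[m,\omega)$. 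So $T^f$ has $\omega$-chains with no upper bound, alongside chains of the same quantifier-free type that do have one. Your plan must either restrict the notion (to whatever \cite{Mor23} actually uses) so that this situation is excluded, or prove an additional closure property of $T^f$.
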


\begin{defn}\label{K_of_f}
    Let $\varepsilon\leq\lambda$ be a regular cardinal. If $\mathcal T$ is an unstable theory or a superstable theory with DOP. Then for every $f\in \kappa^\kappa$ define the order $K(f)$ by:
    \begin{enumerate}
        \item [I.] $dom \ K(f)=(dom\ T^f\times \{0\})\cup (dom\ T^f\times \{1\})$.
        \item [II.] For all $\eta\in T^f$, $(\eta,0)<_{K(f)}(\eta,1)$.
        \item [III.] If $\eta,\xi\in T^f$, then $\eta< \xi$ if and only if $(\eta,1)<_{K(f)}(\xi,0)$.
        \item [IV.] If $\eta,\xi\in T^f$, then $\eta\prec \xi$ if and only if $$(\eta,0)<_{K(f)}(\xi,0)<_{K(f)}(\xi,1)<_{K(f)}(\eta,1).$$ 

    \end{enumerate}
\end{defn}

In the case of $\mathcal T$ being a superstable theory with OTOP, we define $K(f)$ in a similar way modifying some items of Definition \ref{K_of_f}:
    \begin{enumerate}
        \item [I'.] $dom \ K(f)=(dom\ T^f\times \{0\})\cup \{(\eta,1)\mid\eta\in T^f\ \&\ T^f\not\models P_{\lambda\cdot\lambda}(\eta)\}$.
        \item [II'.] For all $\eta\in T^f$ such that $T^f\not\models P_{\lambda\cdot\lambda}(\eta)$, $(\eta,0)<_{K(f)}(\eta,1)$.
        \item [III'.] If $\eta,\xi\in T^f$, then $\eta< \xi$ if and only if $(\eta,1)<_{K(f)}(\xi,0)$.
        \item [IV'.] If $\eta,\xi\in T^f$ such that $T^f\not\models P_{\lambda\cdot\lambda}(\xi)\ \vee\ P_{\lambda\cdot\lambda}(\eta)$, then $\eta\prec \xi$ if and only if $$(\eta,0)<_{K(f)}(\xi,0)<_{K(f)}(\xi,1)<_{K(f)}(\eta,1).$$ 
        \item [V'.] If $\eta,\xi\in T^f$ such that $T^f\models P_{\lambda\cdot\lambda}(\xi)$ and $T^f\not\models P_{\lambda\cdot\lambda}(\eta)$, then $\eta\prec \xi$ if and only if $$(\eta,0)<_{K(f)}(\xi,0)<_{K(f)}(\eta,1).$$
    \end{enumerate}

\begin{fact}[{\cite[Lemma 4.14]{Mor23}}]\label{construction_unst}
Suppose $\mathcal T$ is a complete unstable theory in a countable relational vocabulary $\tau$. Let $\tau^1$ be a Skolemization of $\tau$, and $\mathcal T^1$ be a complete theory in $\tau^1$ extending $\mathcal T$ and with Skolem-functions in $\tau$. 
Then for every $f\in 2^\kappa$ there is $\mathcal{M}_1^f\models \mathcal T^1$ with the following properties.

\begin{enumerate}
    \item There is a map $\mathcal{H}: T^f\rightarrow (dom \ \mathcal{M}_1^f)^n$ for some $n<\omega$, $\eta\mapsto a_\eta$, such that $\mathcal{M}_1^f$ is the Skolem hull of $\{a_\eta\mid \eta\in T^f\}$. Let us denote $\{a_\eta\mid \eta\in T^f\}$ by $Sk(\mathcal{M}_1^f)$.
    \item $\mathcal{M}^f=\mathcal{M}_1^f\restriction \tau$ is a model of $\mathcal T$.
    \item $Sk(\mathcal{M}_1^f)$ is indiscernible in $\mathcal{M}_1^f$ relative to $L_{\omega \omega}$.
    \item There is a formula $\varphi\in L_{\omega \omega}(\tau)$ such that for all $\eta,\nu\in T^f$ and $m<{\lambda\cdot\lambda}$, if $T^f\models P_m(\eta) \wedge P_{\lambda\cdot\lambda}(\nu)$, then $\mathcal{M}^f\models \varphi (a_\nu,a_\eta)$ if and only if $T^f\models \eta\prec \nu$.
\end{enumerate}
\end{fact}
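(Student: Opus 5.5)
The statement is Fact \ref{construction_unst}, quoted from \cite[Lemma 4.14]{Mor23}. Here the index structure is our tree $T^f$ rather than the two-colour tree of \cite{Mor23}. So the plan is to rerun Shelah's construction of Ehrenfeucht--Mostowski models over ordered trees, indexing by the linear order $K(f)$ of Definition \ref{K_of_f}, and to check that nothing in it depends on the colours.

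The first step is the indiscernible sequence. Since $\mathcal T$ is unstable, it has the order property: there is $\psi(\bar x,\bar y)\in L_{\omega\omega}(\tau)$ and a sequence of tuples ordered by $\psi$. By Ramsey and compactness (Ehrenfeucht--Mostowski), we get for every linear order $l$ a model $EM_1(l)\models\mathcal T^1$. It is generated by an order-indiscernible skeleton $(\bar b_x)_{x\in l}$ of $n'$-tuples, with $\psi(\bar b_x,\bar b_y)$ holding iff $x<y$. Fix a dense $l_B$ into which every $K(f)$ embeds; this is possible since $|K(f)|=\kappa$. Then take $\mathcal M^f_1$ to be the Skolem hull of $\{\bar b_x\mid x\in K(f)\}$, and set $a_\eta=\bar b_{(\eta,0)}{}^\frown\bar b_{(\eta,1)}$, so $n=2n'$. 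This gives items (1) and (2) directly: $\mathcal M^f_1\models\mathcal T^1$ by elementarity, and its reduct $\mathcal M^f$ models $\mathcal T$.

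The second step is indiscernibility, item (3). The key point is that the quantifier-free type of a tuple $\bar\eta$ in $T^f$ determines the order type of the points $(\eta_i,0),(\eta_i,1)$ in $K(f)$. This holds because clauses II--IV of Definition \ref{K_of_f} are expressed purely through $\prec$ and $<$, together with $\wedge$ for incomparable nodes. The type also fixes which pairs are equal. Order-indiscernibility of the skeleton then transfers to $L_{\omega\omega}$-indiscernibility of $\{a_\eta\}$ relative to atomic types in $T^f$. The only care needed is with ordering siblings via their meet: for $\eta<\xi$ as siblings below $\wedge(\eta,\xi)$, clause III places the whole $K$-interval of $\eta$ before that of $\xi$. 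I would verify this by a short induction on the length of the tuple.

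The third step is defining the tree order, item (4), and this is the main obstacle. For $\eta\in P_m$ and $\nu\in P_{\lambda\cdot\lambda}$, clause IV says $\eta\prec\nu$ iff $(\eta,0)<(\nu,0)<(\nu,1)<(\eta,1)$. Set
$$\varphi(\bar x_0\bar x_1,\bar y_0\bar y_1)=\psi(\bar y_0,\bar x_0)\wedge\psi(\bar x_1,\bar y_1).$$
Then $\mathcal M^f\models\varphi(a_\nu,a_\eta)$ iff $(\eta,0)<(\nu,0)$ and $(\nu,1)<(\eta,1)$, and this is exactly $\eta\prec\nu$, because otherwise the $K$-intervals of $\eta$ and $\nu$ are disjoint. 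Here $\psi$ must be taken with the instability witness oriented correctly; with the alternative convention, swap arguments.

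The delicate part is that $\psi$ has to define the order exactly on the skeleton, in both directions, inside the Skolem hull. I would arrange this by building the EM blueprint from a sequence where $\psi$ holds precisely for increasing pairs, which the order property provides after a Ramsey extraction. Note also that the construction never uses the colouring $d^f_\sigma$; the colours only matter later, through which leaves were removed in forming $T^f_\sigma$. Hence the argument of \cite{Mor23} applies verbatim to our $T^f$ and to any $f\in\kappa^\kappa$, not just $f\in2^\kappa$.
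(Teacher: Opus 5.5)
Your construction is the intended one, and it is the Shelah/Hyttinen--Tuuri argument behind \cite[Lemma 4.14]{Mor23}; the paper itself only cites it. It builds an EM model over the linear order $K(f)$ of Definition \ref{K_of_f}, sets $a_\eta=\bar b_{(\eta,0)}{}^\frown\bar b_{(\eta,1)}$, and takes $\varphi$ to be a conjunction of two instances of the order formula $\psi$. Items (1), (2) and (4) are fine as you argue. You are also right that the colours play no role, so any $f\in\kappa^\kappa$ works.

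The gap is in your justification of (3). With $<$ as in Definition \ref{Shtree}(6), i.e.\ holding only between siblings, the atomic type of a tuple in $T^f$ does not determine the $K(f)$-order of its points.

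\emph{Counterexample.} Take two distinct leaves $\eta,\xi$. Since $\lambda\cdot\lambda$ is a limit ordinal, they are never siblings. Swapping $\eta$ and $\xi$ while fixing $\wedge(\eta,\xi)$ is an automorphism of the substructure they generate, so $(\eta,\xi)$ and $(\xi,\eta)$ have the same atomic type. Yet exactly one of $\psi(\bar b_{(\eta,1)},\bar b_{(\xi,0)})$ and $\psi(\bar b_{(\xi,1)},\bar b_{(\eta,0)})$ holds. Hence $a_\eta{}^\frown a_\xi$ and $a_\xi{}^\frown a_\eta$ have different types.

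What decides the matter is the order of $\eta\restriction(m+1)$ and $\xi\restriction(m+1)$, where $m=\dom(\wedge(\eta,\xi))$. These nodes are not in the $\wedge$-closure of the tuple, and your remark about ``ordering siblings via their meet'' silently uses them.

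\emph{Fix.} Item (3) has to be read in one of two ways:
\begin{enumerate}
\item with $<$ extended lexicographically to incomparable nodes, comparing $\eta\restriction(m+1)$ with $\xi\restriction(m+1)$, which is exactly how $K(f)$ orders their intervals; or
\item as indiscernibility only for tuples closed under initial segments.
\end{enumerate}
The second reading is the form actually used in Lemma \ref{iso_direction}. There, the map $\phi$ of Lemma \ref{similar type lemmaf} is defined on downward closures, so it also respects the lexicographic order.

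Under either reading your argument goes through, and no induction is needed. The $K(f)$-order of $(\eta_i,e)$ and $(\eta_j,e')$ depends only on whether $\eta_i=\eta_j$, $\eta_i\prec\eta_j$, or $\eta_j\prec\eta_i$, or otherwise on which lexicographic side of $\eta_j$ the node $\eta_i$ lies. All of this is recorded in the type.
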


There are corespondent Facts for superstable theories with OTOP or DOP. In the OTOP case the only change is  $L_{\omega \omega}$ for $L_{\infty \omega}$. In the DOP case, $L_{\omega \omega}$ changes for $L_{\omega_1 \omega_1}$ and $\varepsilon=\omega_1$. In particular, in the case of DOP, $\omega_1\leq\lambda$.

Let us define $\mathbb F^*=\{\eta\in \mathbb F\mid \forall\alpha>0 (\eta(\alpha)<\alpha)\}$.
Let $\mathcal{T}$ be an unstable theory or a superstable non-classifiable theory. For all $f\in \mathbb F^*$, let $\mathcal{M}^f$ be the model from Fact \ref{construction_unst} (or the respective one in the case of OTOP or DOP). 

The construction of Ehrenfeucht-Mostowski models for stable unsuperstable theories is different than for the other cases. The construction of the models follow the generalized Ehrenfeucht-Mostowski models (see \cite{Sh} Theorem 1.3 and \cite{Sh90} Chapter VII). The construction of such models depends on the existence of a proper function $\Phi$. Unfortunately, the argument in \cite{Sh90} for the existence of a proper function, fails when the ordered tree has high $\omega\cdot\omega$ or higher. 

\begin{defn}
	Let $T\in K^{\lambda\cdot\lambda}_{tr}$, $A\subseteq T$ and $\eta\in T$.
	\begin{itemize}
	\item We say that $A$ is \textit{downward closed} if for all $\eta\in A$, $\eta\restriction m\in A$ if $m<lg(\eta)$.
	\item Let $\eta^{\downarrow}=(\eta\restriction \alpha)_{\alpha\leq lg(\eta)}$.
	\end{itemize}
\end{defn}

\begin{lemma}\label{similar type lemmaf}
Let $T\in K^{\lambda\cdot\lambda}_{tr}$, $A\s T$ downward closed. Let $\vec{\eta}_0:=\langle\eta^i_0\mid i<n\rangle $ and $\vec{\eta}_1:=\langle\eta^i_1\mid i<n\rangle$ two sequences of elements in $T$. If the following holds, 
\begin{enumerate}
    \item $\tp_{\bs}(\vec{\eta}_0,\emptyset,T)=\tp_{\bs}(\vec{\eta}_1,\emptyset,T)$; 
    \item for all $i<n$, $\tp_{\bs}((\eta^i_0)^{\downarrow},A,(T,\prec,<))=\tp_{\bs}((\eta^i_1)^{\downarrow},A,(T,\prec,<))$.
\end{enumerate}
Then,
$$\tp_{\bs}(\vec{\eta}_0,A,T)=\tp_{\bs}(\vec{\eta}_1,A,T).$$
\end{lemma}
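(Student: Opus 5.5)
We will show directly that the map $\eta^i_0\mapsto\eta^i_1$ ($i<n$), extended by the identity on $A$, preserves every basic formula. The vocabulary of $T\in K^{\lambda\cdot\lambda}_{tr}$ consists of $\prec$, the predicates $P_m$ ($m\leq\lambda\cdot\lambda$), the order $<$ on successor sets, and the function $\wedge$. So a basic formula with parameters in $A$ is a (negated) atomic formula in terms built from $\wedge$, the variables $x_i$ (interpreted as $\eta^i_0$ or $\eta^i_1$) and parameters $a\in A$. Atomic formulas involving only variables are handled by hypothesis (1), and those involving only parameters are trivial. It therefore suffices to treat terms and atomic formulas that mix variables with parameters.

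The first step is to reduce terms to a normal form. The operation $\wedge$ is associative, commutative and idempotent on a tree: $\wedge(s_1,\dots,s_k)$ is the longest common initial segment of all the $s_j$. So every term evaluates to $\wedge(X\cup B)$ with $X\subseteq\{x_i\}$ and $B\subseteq A$ finite. Put $b=\wedge(B)$. This is an initial segment of an element of $A$, hence $b\in A$, since $A$ is downward closed. Also $\wedge(X)$ is the same kind of term in the variables only, and $\wedge(\wedge(X),b)$ is the restriction of $\wedge(X)$ to the length $\ell=lg(\wedge(\wedge(X),b))$.

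The key claim is that $\wedge(\vec\eta_0^X)$ and $\wedge(\vec\eta_1^X)$ lie below the same $\eta^i_\epsilon$ at the same height, and that their meets with $b$ correspond. Fix some $x_i\in X$. Then $\wedge(X)$ evaluated at $\vec\eta_\epsilon$ equals $\eta^i_\epsilon\restriction\mu_\epsilon$. By (1), the atomic formulas $P_m(\wedge(X))$ agree, so $\mu_0=\mu_1=:\mu$. Next, $\wedge(\eta^i_\epsilon\restriction\mu,b)=\eta^i_\epsilon\restriction\nu_\epsilon$, where $\nu_\epsilon$ is the largest $\nu\leq\mu$ with $\eta^i_\epsilon\restriction\nu\prec b$ or $\eta^i_\epsilon\restriction\nu=b$. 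Hypothesis (2) says that $(\eta^i_0)^{\downarrow}$ and $(\eta^i_1)^{\downarrow}$ have the same basic type over $A$ in $(T,\prec,<)$. Hence for each $\nu$ the elements $\eta^i_0\restriction\nu$ and $\eta^i_1\restriction\nu$ stand in the same $\prec$, $=$ and $<$ relations to every element of $A$. In particular $\nu_0=\nu_1$, and if $\eta^i_0\restriction\nu\in A$ then $\eta^i_1\restriction\nu=\eta^i_0\restriction\nu$, because equality with a parameter is a basic formula.

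We then check each atomic formula $R(t,s)$ with $R\in\{\prec,=,<\}$, or $P_m(t)$. Each side becomes either a parameter in $A$ or a restriction $\eta^i_\epsilon\restriction\nu$ with the same index $i$ and height $\nu$ for $\epsilon=0,1$. There are three cases.
\begin{enumerate}
\item If both sides are parameters, there is nothing to check.
\item If exactly one side is a parameter, the relation is decided by (2) applied to the element of $(\eta^i_\epsilon)^{\downarrow}$ at height $\nu$.
\item If both sides are variable terms $\eta^i_\epsilon\restriction\nu$ and $\eta^j_\epsilon\restriction\nu'$, we use (1). Each restriction $\eta^i_\epsilon\restriction\nu$ at a height realized by a term is itself definable by a term $\wedge(x_i,c)$. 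When $c$ is a parameter, the relation between the two restrictions is determined by the relation of $\eta^i,\eta^j$ through their meet $\wedge(x_i,x_j)$, whose height is fixed by (1). Then $<$ and $\prec$ between the restrictions are read off from the heights, $\nu,\nu'$ and the order of $\eta^i,\eta^j$ at the successor of the meet (again by (1)). If one restriction lies inside the meet and the other does not, we compare their heights with the height of the meet.
\end{enumerate}

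The main obstacle is case 3, that is, making rigorous that the relations between mixed restrictions are determined by $\tp_{\bs}(\vec\eta,\emptyset)$ together with the heights supplied by (2). I would handle it by a direct case analysis on the relative position of $\nu$, $\nu'$ and $lg(\eta^i\wedge\eta^j)$. The relevant facts are that $<$ only relates immediate successors of a common node, and that the tree condition (8) of Definition \ref{Shtree} ensures restrictions at limit heights are determined by their predecessors.
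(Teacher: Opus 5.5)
Your normal form for terms, the argument that $\mu_0=\mu_1$ and $\nu_0=\nu_1$, and your Cases 1 and 2 are fine. The gap is in Case 3, at the step where the order of $\eta^i,\eta^j$ ``at the successor of the meet'' is obtained ``again by (1)''. In $K^{\lambda\cdot\lambda}_{tr}$ the relation $<$ only compares siblings. If $\delta$ is the height of $\wedge(\eta^i,\eta^j)$, the nodes $\eta^i\restriction(\delta+1)$ and $\eta^j\restriction(\delta+1)$ are in general not values of parameter-free terms, so $\tp_{\bs}(\vec\eta,\emptyset,T)$ says nothing about their order. Here is a concrete case. Let $T$ consist of a root $\langle\rangle$, two successors $\langle a\rangle<\langle b\rangle$, and one node above each, $\langle a,c\rangle$ and $\langle b,c\rangle$. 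Take $\vec\eta_0=(\langle a,c\rangle,\langle b,c\rangle)$, $\vec\eta_1=(\langle b,c\rangle,\langle a,c\rangle)$ and $A=\{\langle\rangle\}$. Both hypotheses hold, since the two leaves are not siblings and their meet is the root. Yet the order at the successor of the meet is reversed. So the case analysis you outline cannot be completed from (1).

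The missing observation makes that analysis unnecessary. If $B\neq\emptyset$, the value of $\wedge(X\cup B)$ is an initial segment of $b=\wedge(B)\in A$, so it lies in $A$ because $A$ is downward closed. By your own argument with (2), it is the same element of $A$ for $\epsilon=0,1$. Thus every term containing a parameter denotes a fixed element of $A$. Any atomic formula whose sides are not both such terms then either has exactly one such side, which is your Case 2, or has two parameter-free sides. In the latter case it belongs to $\tp_{\bs}(\vec\eta_0,\emptyset,T)$ and (1) applies verbatim. With this, your term-level proof is complete.

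Your proof also differs from the paper's. The paper extends the identity on $A$ to a map $\phi$ on all of $A\cup(\vec\eta_0)^{\downarrow}$, sending $\eta^i_0\restriction\tau\mapsto\eta^i_1\restriction\tau$, and claims it is a partial isomorphism. Its check that $\phi$ preserves $<$ makes the same transfer of the order at the successor of the meet via clause (1). In the example above, $\phi$ swaps $\langle a\rangle$ and $\langle b\rangle$, so that check fails, even though the lemma's conclusion holds there. Working only with values of terms, as you do, avoids needing that stronger statement.
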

\begin{proof}
We define the map $\phi:A\cup(\vec{\eta}_0)^{\downarrow}\rightarrow A\cup(\vec{\eta}_1)^{\downarrow}$ as follows:
If $\eta\in A$, then $\phi(\eta)=\eta$. Otherwise, for $i<n$, $\tau<\dom(\eta^i_0)$ define $\phi(\eta^i_0\restriction\tau)=\eta^i_1\restriction\tau$.
\begin{claim}
$\phi$ is a well defined bijective map.
\end{claim}
\begin{proof}
Suppose, $\eta\in A$ and for some $i<n$ and $\tau<\dom(\eta^i_0)$, $\eta=\eta^i_0\restriction\tau$. Then, by Clause~(2), $T\models \eta=\eta^i_1\restriction\tau$.
Suppose now, that for distinct $i,j<n$ and $\tau<\dom(\eta^i_0)$ $\eta_0^i\restriction\tau=\eta^j_0\restriction\tau$.
Then, there exist $\tau'\geq\tau$ such that $T\models P_{\tau'}(\wedge(\eta^i_0,\eta^j_0))$.
By Clause~(1), $T\models P_{\tau'}(\wedge(\eta^i_0,\eta^j_0))$ and $\eta^i_1\restriction\tau=\eta^j_1\restriction\tau$.
By a symmetrical argument the map $\phi^{-1}$ is well defined \qedhere
\end{proof}
Since both $\dom(\phi),\rng(\phi)$ are closed under $\wedge$, they are submodels of $T$. Thus, if we manage to prove the following claim we are done.
\begin{claim}
The map $\phi$ preserves the relations $\prec$ and $<$. 
\end{claim}
\begin{proof}
Let $\eta,\nu\in\dom(\phi)$. Note that for any relation $R\in\{\prec,<\}$ if both $\eta,\nu\in A$ then the conclusion is trivial. So, assume that $\eta\notin A$. Let $i<n$ and $\tau<\dom(\eta^i_0)$ be such that $\eta=\eta^i_0\restriction\tau$. 
We split into two cases depending on $R$:

$\br$ Suppose that $\eta\prec\nu$. Observe that $\nu\notin A$, otherwise by the downward closure of $A$ we get $\eta\in A$ as well. 
Therefore, there exist $j<n$ and $\tau'\leq\dom(\eta^j_0)$ such that $\nu=\eta^j_0\restriction\tau'$. Since $\eta\prec\nu$, $\eta^j_0\restriction\tau=\eta$. Therefore, $\eta^j_0\restriction\tau=\eta^i_0\restriction\tau$.  

Altogether,
$$\phi(\eta)=\phi(\eta^i_0\restriction\tau)=\phi(\eta^j_0\restriction\tau)=\eta^j_1\restriction\tau\prec\eta^j_1\restriction\tau'=\phi(\eta^j_0\restriction\tau')=\phi(\nu).$$

$\br$ Suppose now that $\eta<\nu$. If $\nu\in A$, then the conclusion follows from Clause~(2) of the lemma. 
Hence, suppose that $\nu\notin A$. Let $j<n$ and $\tau'<\dom(\eta^j_0)$ such that, $\tau=\tau'+1$, $\eta=\eta^i_0\restriction\tau$, $\nu=\eta^j_0\restriction\tau$, and $\eta\restriction\tau'=\nu\restriction\tau'$. 
Since $T\models P_{\tau'}(\wedge(\eta,\nu))$ and $T\models\eta<\nu$, 
by clause (1) $T\models P_{\tau'}(\wedge(\eta^i_1\restriction\tau,\eta^j_1\restriction\tau))$ and $T\models\eta^i_1<\eta^j_1$. 
Therefore, 
$$\phi(\eta)=\eta^i_1\restriction\tau<\eta^j_1\restriction\tau=\phi(\nu).$$
\end{proof}
The proof is now complete.
\end{proof}

\begin{lemma}\label{iso_direction}
Let $f,g\in \mathbb F^*$. If $\mathcal{M}^f$ and $\mathcal{M}^g$ are isomorphic, then 
$f=^\mathbb F_{S_0} g$.

\end{lemma}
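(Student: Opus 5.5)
We argue by contraposition: assuming $f\neq^\mathbb F_{S_0} g$, we show $\mathcal M^f\not\cong\mathcal M^g$. Since $f,g\in\mathbb F^*$, the set $\{\alpha\in S_0\mid f(\alpha)\neq g(\alpha)\}$ is stationary. By Fodor's lemma (both $f,g$ are regressive on it), there is a stationary $X\subseteq S_0$ and ordinals $\sigma\neq\sigma'$ with $f\restriction X\equiv\sigma$ and $g\restriction X\equiv\sigma'$. Thus $f^{-1}\{\sigma\}\cap S_0$ is stationary, so $S^f_\sigma=f^{-1}\{\sigma\}\cap S_0\supseteq X$. On the other side, either $S^g_\sigma=E_\sigma$, which is disjoint from $S_0$, or $S^g_\sigma=g^{-1}\{\sigma\}\cap S_0$, which is disjoint from $X$. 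In both cases $X\cap S^g_\sigma=\emptyset$. Hence in the trees $T^f_\sigma$ and $T^g_\sigma$ the colour function $H$ differs on the stationary set $X$: branches of $T^f_\sigma$ with $\sup\rng(\eta_8)=\delta\in X$ survive, while those of $T^g_\sigma$ do not.

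Next we fix a putative isomorphism $F\colon\mathcal M^f\to\mathcal M^g$ and take the standard filtrations $\mathcal M^f_\alpha,\mathcal M^g_\alpha$ given by the Skolem hulls of $\{a_\eta\mid\eta\in(T^f)^\alpha\}$ (respectively for $g$). There is a club $C$ of $\alpha$ such that:
\begin{itemize}
\item $F[\mathcal M^f_\alpha]=\mathcal M^g_\alpha$;
\item $\alpha>\sigma$;
\item $\alpha$ is closed under the functions witnessing $(\kappa,\varepsilon)$-niceness and $(<\kappa)$-stability (Lemma \ref{representation_tree});
\item $(T^f)^\alpha$ and $(T^g)^\alpha$ are closed under the relevant Skolem-term supports.
\end{itemize}
Choose $\delta\in C\cap X$ of cofinality $\lambda$. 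Using Lemma \ref{representation_tree} and Definition \ref{treeconst}(8), we build a leaf $\eta\in\mathcal B(T^f_\sigma)$ whose initial segments are cofinal in $(T^f)^\delta$ with $\sup\rng(\eta_8)=\delta$. Its colour is $H^f(\delta)=1$, so $\eta\in T^f$ and $\eta\notin(T^f)^\delta$. Then $a_\eta$ is in $\mathcal M^f$, and $F(a_\eta)=\mu(a_{\xi_1},\dots,a_{\xi_k})$ for some $\xi_i\in T^g$.

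The key step, and the main obstacle, is to derive a contradiction from the position of $F(a_\eta)$ relative to $\mathcal M^g_\delta$. By Fact \ref{construction_unst}(4) (or its OTOP/DOP variants), the formula $\varphi$ shows that $a_\eta$ is $\varphi$-related to $a_{\eta\restriction m}$ for cofinally many $m$, all lying in $\mathcal M^f_\delta$. Hence $F(a_\eta)$ is $\varphi$-related to the images $F(a_{\eta\restriction m})\in\mathcal M^g_\delta$, and these form a chain of order type $\lambda$ in $\mathcal M^g_\delta$ converging to level $\delta$.

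For each $\xi_i\notin(T^g)^\delta$, we apply Lemma \ref{the_tau}, together with the fact that no leaf of $T^g_\sigma$ has $\sup\rng=\delta$ because $\delta\notin S^g_\sigma$. This gives $\tau_i$ with $\sup\rng(\xi_i)_8\restriction\tau_i<\delta$, so each $\xi_i$ leaves $(T^g)^\delta$ at a node of height $<\delta$. Lemma \ref{similar type lemmaf} and Fact \ref{DOP2} then let us replace $\bar\xi$ by $\bar\xi'$ of the same $\bs$-type over a set $A=(T^g)^\beta$, for some $\beta<\delta$ (using $(<\kappa)$-stability and niceness to bound the types). By indiscernibility, $F(a_\eta)$ has the same $\varphi$-type over $\mathcal M^g_\beta$-related elements as an element generated from $\bar\xi'$. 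But $F(a_\eta)$ must be $\varphi$-related to elements of $\mathcal M^g_\delta\setminus\mathcal M^g_\beta$ cofinally, which contradicts the boundedness below $\beta$.

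The delicate part is exactly this type-counting and $\beta<\delta$ argument. For the DOP and OTOP cases, we would follow \cite{Mor23} with the $L_{\omega_1\omega_1}$ or $L_{\infty\omega}$ formula in place of $\varphi$.
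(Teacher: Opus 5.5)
\textbf{There is a genuine gap in your key step.} Your opening reduction is fine: two applications of Fodor give a stationary $X\subseteq S_0$ with $f\restriction X\equiv\sigma$, $g\restriction X\equiv\sigma'$ and $\sigma\neq\sigma'$, which is a cleaner version of the paper's choice of $\sigma_f,\sigma_g$ and $S^g$. The problem is your claim that every generator $\xi_i\notin(T^g)^\delta$ of $F(a_\eta)$ leaves $(T^g)^\delta$ at a node of bounded height, on the grounds that no leaf of $T^g_\sigma$ has supremum $\delta$. That fact only controls the component $\sigma$ of $T^g=\bigvee_{\sigma''<\kappa}T^g_{\sigma''}$, and nothing forces $F$ to respect components.

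Since $X\subseteq g^{-1}\{\sigma'\}\cap S_0$ is stationary, $S^g_{\sigma'}=g^{-1}\{\sigma'\}\cap S_0$ contains $\delta$. So $T^g_{\sigma'}$ has surviving leaves $\xi$ with $\sup\rng(\xi_8)=\delta$, i.e.\ $\xi\in\acc((T^g)^\delta)\setminus(T^g)^\delta$. If some $\xi_i$ is such a leaf, the hypothesis of Lemma \ref{the_tau} fails, and the swap argument at level $\delta$ breaks down. For a node $\eta_0\preceq\eta$ just below $\delta$ and a sibling $\eta_1$, an element of $\Pi(\eta_0)$ may be an initial segment of $\xi_i$ while the corresponding element of $\Pi(\eta_1)$ is not. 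This is not a marginal case: it is exactly what an isomorphism would do, sending the branch at $\delta$ in component $\sigma$ to a branch at $\delta$ in component $\sigma'$. At your $\delta$ both $T^f$ (component $\sigma$) and $T^g$ (component $\sigma'$) have surviving leaves, so no argument that only compares colours at $\delta$ can succeed.

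\textbf{The missing idea} is the paper's Claim \ref{claim of beta}, which uses Definition \ref{treeconst}(8). The accumulation points of cofinality $\lambda$ of $\rng(\eta_8)$ below $\delta$ lie in $E_\sigma$. Those of $\rng((\xi_i)_8)$, for a leaf $\xi_i$ of $T^g_{\sigma'}$, lie in $E_{\sigma'}$. Since $E_\sigma\cap E_{\sigma'}=\emptyset$, you can proceed as follows. Build $\eta$ so that its accumulation points of cofinality $\ge\lambda$ lie in your club. Then pick $\beta<\delta$ with $\beta\in\acc(\rng(\eta_8))$, $\cf(\beta)=\lambda$, above all the bounded data coming from the other $\xi_j$, such that no initial segment of any $\xi_i$ lies in $\acc((T^g)^\beta)\setminus(T^g)^\beta$. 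Now run the argument at $\beta$ rather than at $\delta$.

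\textbf{Your final step also needs to be made concrete.} As written, ``same $\varphi$-type over $\mathcal M^g_\beta$-related elements'' gives no contradiction by itself; you need the explicit swap:
\begin{enumerate}
\item Take $\gamma<\beta$ in the club, above the levels $\alpha_i$ attached to the $\xi_i$.
\item Let $\eta_0=\eta\restriction(\tau+1)$, with $\tau$ maximal such that $\eta\restriction\tau\in(T^f)^\gamma$.
\item Use $(<\kappa)$-stability to find a sibling $\eta_1\in(T^f)^\gamma$ with the same Skolem terms, and with $\Pi(\eta_1)$ of the same atomic type as $\Pi(\eta_0)$ over $(T^g)^{\alpha}$.
\item Apply Lemma \ref{similar type lemmaf} together with niceness to get $\tp_{at}(\Pi(\eta_0)^\frown\bar\xi,\emptyset,T^g)=\tp_{at}(\Pi(\eta_1)^\frown\bar\xi,\emptyset,T^g)$.
\end{enumerate}
This contradicts $\varphi$, because $\eta_0\preceq\eta$ but $\eta_1\not\preceq\eta$.
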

\begin{proof}
Let us assume, for the sake of contradiction, that $\mathcal M^f$ and $\mathcal M^g$ are isomorphic but $f\neq^\mathbb F_{S_0} g$.
Thus, the set $S:=\{\delta\in S_0\mid f(\delta)\neq g(\delta)\}$ is stationary. 
Since $f\in \mathbb F^*$, applying Fodor's lemma on $S$, there exist $S'\s S$ stationary and $\sigma<\kappa$ such that $S'\s f^{-1}\{\sigma\}$ and $S^f_\sigma\neq E_\sigma$.

\begin{claim}
    $S'\cap S^g_{\sigma}=\emptyset$.
\end{claim}
\begin{proof}
    By definition, $S^g_{\sigma}\in\{g^{-1}\{\sigma\},E_{\sigma}\}$. Since $S'\s f^{-1}\{\sigma\}\cap S$, $S'\cap g^{-1}\{\sigma\}=\emptyset$. On the other hand, $S'\cap E_{\sigma}\subseteq S_0\cap S_1=\emptyset$.
\end{proof}

Therefore, $S' \triangle S^g_\sigma$ is stationary, so $S^f_\sigma \triangle S^g_\sigma$ is stationary. For all $\sigma'<\kappa$, we will write $S^f_{\sigma'}\neq^{\ns}S^g_{\sigma'}$ when $S^f_{\sigma'} \triangle S^g_{\sigma'}$ is stationary. Thus, $\{\sigma'<\kappa\mid S^f_{\sigma'}\neq^{\ns}S^g_{\sigma'}\}$ is nonempty, let $\Sigma:=\min\{\sigma'<\kappa\mid S^f_{\sigma'}\neq^{\ns}S^g_{\sigma'}\}$.

Notice that, either $S^g_{\Sigma}\backslash S^f_{\Sigma}\s S_0$ is stationary, or $S^f_{\Sigma}\backslash S^g_{\Sigma}\s S_0$ is stationary. Since otherwise, $S^{g}_{\Sigma}=E_{\Sigma}=S^f_{\Sigma}$ contradicting $S^{g}_{\Sigma}\neq^{\ns}S^f_{\Sigma}$. 
Without loss of generality, we may assume that $S^f_{\Sigma}\s S_0$ and $S^f_{\Sigma}\backslash S^g_{\Sigma}$ is stationary. Let us denote by $\sigma_f:=\Sigma$. 
From the minimality of $\sigma_f$, for all $\sigma<\sigma_f$, $S^{g}_{\sigma}=^{\ns} S^{f}_{\sigma}$ and $S^{f}_{\sigma}\cap S^{f}_{\sigma_f}=\emptyset$.
Therefore, there is a stationary subset $S^f\s S^f_{\sigma_f}$ such that $S^f\cap(\bigcup_{\sigma\leq \sigma_f} S^{g}_\sigma)=\emptyset$. 

Since $g$ is regressive and $S^f$ is stationary, by Fodor's lemma there is $\sigma<\kappa$ such that $S^f\cap g^{-1}\{\sigma\}$ is stationary.
Let $\sigma_g:=\min\{\sigma<\kappa\mid S^f\cap g^{-1}\{\sigma\}\text{ is stationary }\}$.
Since $S^f\cap(\bigcup_{\sigma\leq\sigma_f} S^{g}_\sigma)$ is empty, we know $\sigma_g>\sigma_f.$
Set $S^g:=S^f\cap g^{-1}\{\sigma_g\}$. 

Let $F$ be an isomorphism from $\mathcal M^g$ to $\mathcal M^f$.
Let us denote by $\bar a_\eta$ and $\bar b_\xi$ the elements of $Sk(\mathcal M_1^g)$ and $Sk(\mathcal M_1^f)$. For a sequence $\bar{a}=(a^0,\ldots, a^m)$ from $\mathcal M^g$ we denote $F(\bar{a})=(F(a^0),\ldots,F(a^m))$ and for a sequence $\bar{v}=(v^0,\ldots,v^m)$ from $T^f$ we denote $\bar{b}_{\bar{v}}=\bar b_{v^0}^\frown\cdots^\frown \bar b_{v_m}$. For each $\eta\in T^g$ let $$F(\bar{a}_\eta)=(\mu_\eta^0(\bar b_{\bar{v}_\eta}),\ldots,\mu_\eta^m(\bar b_{\bar{v}_\eta}))=\bar{\mu}_\eta(\bar{b}_{\bar{v}_\eta}),$$ where $m=lg(\bar a_\eta)-1$, $\mu_\eta^i$ are $\tau^1$-terms and $\bar{v}_\eta$ is a finite sequence of elements of $T^f$.
Let $\Pi:T^g\rightarrow[T^f]^{<\omega}$ be the map given by $$F(\bar{a}_\eta)=\bar{\mu}_\eta(\bar{b}_{\Pi(\eta)}).$$

Recall Notation \ref{notation1} for the definition of $\mathcal B(T^g)$ and $(T^f)^\alpha$.

The following claim is the key of our proof:
\begin{claim}\label{claim of beta}
    For every $\eta\in\mathcal B(T^g)$ with $\delta:=\sup(\rng(\eta_8))$ in $S^g$, there exist $\xi\preceq\eta$ in $T^g$ such that, for all $i<\len(\Pi(\eta))$ for all $\tau\leq\lambda\cdot\lambda$, for $\beta:=\sup(\rng(\xi_8))$ $\Pi(\eta)(i)\restriction\tau\notin\acc((T^f)^{\beta})\setminus (T^f)^{\beta}$.
    In addition,
     if $\Pi(\eta)(i)\in (T^f)^{\delta}$, then $\Pi(\eta)(i)\in (T^f)^{\beta}$.

\end{claim}
\begin{proof}
Let $\eta\in\mathcal B(T^g)$ with $\delta:=\sup(\rng(\eta_8))$ in $S^g$. Denote by $n:=\len(\Pi(\eta))$ and for each $i<n$, let $\nu_i:=\Pi(\eta)(i)$. 
For each $i<n$, we define $B_i\s\delta$ by the following cases:
\begin{itemize}
    \item[(1)] $\nu_i\in (T^f)^{\delta}$: From the definition of the filtration, there is $\beta<\delta$ such that $\nu_i\in (T^f)^{\beta}$. Let $B_i$ be the singleton of the minimal ordinal $\beta$ such that $\nu_i\in (T^f)^{\beta}$. 
    \item[(2)] $\nu_i\in (T^f)^{\bar{\delta}}$ for some $\bar{\delta}>\delta$ and there is $\bar \tau\leq \dom(\nu_i)$ such that $\nu_i\restriction \bar \tau\in\acc((T^f)^{\delta})\setminus (T^f)^{\delta}$. 
    There is $k\leq 8$ $\sup(\rng((\nu_i\restriction \bar \tau)_k))=\delta$. 
    By Remark~\ref{connection to sup of eta5}, $\sup(\rng((\nu_i\restriction \bar \tau)_8))=\delta$.
    Let $B_i:=\acc(\rng((\nu_i)_8\restriction \bar \tau))$.

    \item[(3)] Else, $\nu_i\in (T^f)^{\bar{\delta}}$ for some $\bar{\delta}>\delta$ 
    and there is no $\tau\leq\dom(\nu_i)$ 
    such that $\nu_i\restriction \tau\in\acc((T^f)^{\delta})\setminus(T^f)^{\delta}$. Let $B_i:=\acc(\rng((\nu_i)_8\restriction\bar{\tau}))$, where $\bar{\tau}$ is as in Lemma~\ref{the_tau}.
\end{itemize}
Notice that besides Clause (2), we have $\sup(B_i)<\delta$. 

For all $i<n$ satisfying Clause (2) and $\bar\tau\leq \dom(\nu_i)$ witnessing it, $\sup(\rng((\nu_i)_8\restriction\bar \tau))=\delta$. Since $\delta\in S^g$, $\delta\notin S_1$ and $\bar \tau= \dom(\nu_i)=\lambda\cdot\lambda$. Thus $\nu_i\in\mathcal B(T^f_{\sigma_f})$. 
Recall that $\sigma_f\neq\sigma_g$. Thus, by the construction of $T^f$ specifically Definition \ref{treeconst} clause~(8),
$$B_i\cap\acc(\rng(\eta_8))\s (E_{\sigma_f}\cup\{\delta\})\cap(E_{\sigma_g}\cup\{\delta\})=\{\delta\}.$$ 

Let us choose $\beta\in\acc(\rng(\eta_8))\cap S_\lambda$ such that for all $i<n$ satisfying (1) or (3), $\sup(B_i)<\beta<\delta$, and set $\xi\preceq\eta$ such that $\beta=\sup(\rng(\xi_8))$

Let us show that $\xi$ is the desired node.
Fix $i<n$ and $\tau\leq\lambda\cdot\lambda$ and denote $\zeta:=\nu_i\restriction\tau$.

$\br$ Case Clause~(1)  holds for $i$: By the 
definition of $B_i$, $\zeta\in(T^f)^{\beta}$.

$\br$ Case Clause~(3) holds for $i$: By the definition of $B_i$ and by the
choice of $\bar{\tau}$, there are two subcases:
\begin{itemize}
    \item $\tau\leq\bar{\tau}$. We have $\zeta\subseteq \nu_i\restriction\bar\tau\in (T^f)^{\beta}$,
    \item $\tau\ge\bar{\tau}$. We have $\nu_i\restriction\bar\tau\subsetneq\zeta$, $\nu_i\restriction\bar\tau\in (T^f)^{\delta}$ and $\nu_i\restriction\bar\tau+1\notin (T^f)^{\delta}$. So $\zeta\notin\acc((T^f)^{\beta})\setminus(T^f)^{\beta}$.
\end{itemize}

$\br$ Case Clause~(2) holds for $i$: Let us suppose, for the sake of contradiction, that $\zeta\in\acc((T^f)^{\beta})\setminus (T^f)^{\beta}$.
Therefore, by Remark~\ref{connection to sup of eta5}
$\sup(\rng(\zeta)_8)=\beta$ and $\beta\in\acc(\rng(\nu_i)_8)$. 
Recall that $\delta\in S^g$. So $\delta\in S^f\cap g^{-1}\{\sigma_g\}$, and since $S^g$ is stationary, $g^{-1}\{\sigma_g\}$ is stationary. We conclude that $\delta\in S^g_{\sigma_g}$ and therefore by construction, $\acc(\rng(\eta)_8)\subseteq E_{\sigma_g}\cup \{\delta\}$. Thus $\beta\in E_{\sigma_g}$.

On the other hand, $\delta\in S^f$ so $\delta\in S^f_{\sigma_f}$.
As $\nu_i\restriction\bar\tau\in\acc((T^f)^{\delta})\setminus (T^f)^{\delta}$ and the analysis above: 
\begin{enumerate}
\item[(4)] $\bar\tau=\lambda\cdot\lambda$ and $\sup(\rng((\nu_i\restriction\bar\tau)_8))=\delta$;
    \item[(5)] $\nu_i\in \mathcal{B}(T^f_{\sigma_f})$;
    \item[(6)] $\acc(\rng((\nu_i)_8)\subseteq E_{\sigma_f}\cup \{\delta\}$.
    
\end{enumerate}
Since $\beta\in\acc(\rng((\nu_i)_8))\setminus\{\delta\}$, $\beta\in E_{\sigma_f}$.

Altogether,  $\beta\in E_{\sigma_f}\cap E_{\sigma_g}$. But as $\sigma_f\neq\sigma_g$, $E_{\sigma_f}\cap E_{\sigma_g}=\emptyset$ which is a contradiction. \qedhere

\end{proof}

For all $\eta\in T^f$, $T^f\not\models P_{\lambda\cdot\lambda} (\eta)$, and $\alpha\leq\kappa$ let $$B^f(\eta,\alpha)=Suc_{T^f}(\eta)\cap (T^f)^\alpha.$$ 
It is clear that $\langle B^f(\eta,\alpha) \mid \alpha<\kappa\rangle$ is a filtration of $Suc_{T^f}(\eta)$. 
By Theorem \ref{representation_tree}, $T^f$ is $(\kappa, \varepsilon)$-nice, in particular $Suc_{T^f}(\eta)$ is isomorphic to $\mathcal I$. Since any two representations coincide in a club, for any $\eta\in T^f$ there is a club $C_\eta$ such that for all $\delta\in C_\eta$ with $\cf(\delta)\ge\varepsilon$ and $\nu\in Suc_{T^f}(\eta)$ there is $\beta<\delta$ such that 
$$\forall u\in B^f(\eta,\delta)\ [u>\nu \Rightarrow \exists u'\in B^f(\eta,\beta)\ (u\ge u'\ge\nu)].$$
Let $$\bar{C}^f=\{\delta<\kappa\mid\cf(\delta)\ge\varepsilon\text{ and for all }\eta\in (T^f)_\delta,\  \delta\in C_\eta\}$$ and $C^f$ be $\bar{C}^f$ closed under $\alpha$-limits for $\alpha<\varepsilon$.
Notice that $C^f$ is a club that satisfies that for all $\delta\in C^f$ with $\cf(\delta)\ge\varepsilon$, $\eta\in T^f$, $T^f\not\models P_{\lambda\cdot\lambda} (\eta)$, and $\nu\in Suc_{T^f}(\eta)$ there is $\beta<\delta$ such that,
$$\forall u\in B^f(\eta,\delta)\ [u>\nu \Rightarrow \exists u'\in B^f(\eta,\beta)\ (u\ge u'\ge\nu)].$$

Let us define $C^g$ in a similar way.

Let 
\begin{itemize}
    \item $C_0=C^f\cap C^g$.
    \item $C_1:=\{\delta\in C_0\mid\forall\eta\in T^g(\eta\in (T^g)^{\delta}\text{ implies }\Pi(\eta)\subseteq (T^f)^{\delta})\}.$
    \item $C'_2:=\{\delta\in C_1\mid \forall\alpha<\delta\ \forall\eta\in (T^g)^{\delta}\ \forall u_0\in B^g(\eta,\kappa)\ \exists u_1\in B^g(\eta,\delta)$ $$ \left[\Pi(u_0),\Pi(u_1)\text{ have the same atomic type over }(T^f)^{\alpha}\text{ and }\bar{\mu}_{u_0}=\bar{\mu}_{u_1}\right]\}.$$
\item $C_2=\{\delta\in C'_2\mid\cf(\delta)\ge\lambda\}$
\item $C=\{\delta\in C_2\mid \delta\in C_2\ \&\ \delta\text{ is a limit point of }C_2\}.$ 
\end{itemize}

It is clear that $C_0$ and $C_1$ are clubs. Since $T^f$  is $(<\kappa)$-stable ordered tree, there are less than $\kappa$ possible $bs$-types of $\Pi (u_0)$ over $(T^f)^\alpha$, and since $|\tau^1|<\kappa$, there are less than $\kappa$ possible terms $\bar{\mu}_{u_0}$, so $C'_2$ is a club.

Thus, 

(*) \textit{for all $\delta\in C$, $\eta\in T^f$, $T^f\not\models P_{\lambda\cdot\lambda} (\eta)$, and $\nu\in Suc_{T^f}(\eta)$ there is $\beta<\delta$ such that,
$$\forall u\in B^f(\eta,\delta)\ [u>\nu \Rightarrow \exists u'\in B^f(\eta,\beta)\ (u\ge u'\ge\nu)].$$}

Since $S^g\subseteq S_0$, and $S_1$ and $S^g$ are stationary, $S_1\cap S^g=\emptyset$ and there are $\delta\in S^g\cap C$ and $\eta\in T^g$, such that:
\begin{itemize}
\item [I.] $T^g\models P_{\lambda\cdot\lambda}(\eta)$.
\item [II.] $\delta=\sup(\eta_8)$.
\item [III.] For all $n<\lambda\cdot\lambda$, $\eta\restriction n\in (T^g)^\delta$.
\item [IV.] For all $\alpha<\delta$, there is $m<\lambda\cdot\lambda$ such that $\eta\restriction m\notin (T^g)_\alpha$.
\item [V.] $\acc(\rng(\eta_8))\cap S_{\ge\lambda}\s C$.
\end{itemize}
 
By Claim \ref{claim of beta} we may find $\eta'\in T^g$ such that,

\begin{enumerate}
    \item $\eta'\preceq\eta$;
    \item $\beta:=\sup(\rng(\eta'_8))$;
    \item for all $i<\len(\Pi(\eta))$ for all $\tau\leq\lambda\cdot\lambda$, $\Pi(\eta)(i)\restriction\tau\notin\acc((T^f)^{\beta})\setminus (T^f)^{\beta}$.
\end{enumerate}
Notice from the proof of Claim \ref{claim of beta}, that $\cf(\beta)=\lambda$. From V. in the selection of $\delta$ and $\eta$, we have $\beta\in C$. In particular it is a $\lambda$-limit of $C_2$. 

Denote by $n:=\len(\Pi(\eta))$ and let $\langle\nu_i\mid i<n\rangle$ be an enumeration of $\Pi(\eta)$. 
For all $i<n$ we choose ordinals $\alpha_i\in C_2\cap\delta$ and $\tau_i\leq\lambda\cdot\lambda$ according the following cases:
\begin{itemize}
    \item If $\nu_i\in (T^f)^{\delta}$, then by Claim~\ref{claim of beta}, $\nu_i\in (T^f)^{\beta}$.
    Now, the definition of $\beta$, implies the existence of $\alpha\in C_2\cap\beta$ such that, $\nu_i\in (T^f)^{\alpha}$. Set $\alpha_i:=\alpha$, and $\tau_i:=\dom(\nu_i)$.
    \item If $\nu_i\notin (T^f)^{\delta}$, then by Clause (3) above $\nu_i\notin\acc((T^f)^{\beta})$. Therefore $\nu_i\in (T^f)^{\delta'}$ for some $\delta'>\delta>\beta$.
    So, since $\beta\in C$, by (*), and by addressing Lemma~\ref{the_tau} with $\nu_i$ and $\beta$, we may find some ordinal $\alpha\in C_2\cap\beta$ and $\tau\leq\len(\nu_i)$ such that:
    \begin{itemize}
        \item $w^0=\nu_i\restriction\tau\in (T^f)^{\alpha}$ but $w^1=\nu_i\restriction(\tau+1)\notin (T^f)^{\beta}$.
        \item (**) $\forall u\in B^f(w^0,\beta)\ [u>w^1 \Rightarrow \exists u'\in B^f(w^0,\alpha)\ (u\ge u'\ge w^1)]$.
    \end{itemize}
    Set $\alpha_i:=\alpha$, $\tau_i:=\tau$. 
\end{itemize}

Denote $\alpha:=\max\{\alpha_i\mid i<n\}$. Since $\beta\in C$, we may pick $\gamma\in C_2\cap\beta$ above $\alpha$. 
Let $\tau<\lambda\cdot\lambda$ be the maximal ordinal such that $\eta\restriction\tau\in (T^g)^{\gamma}$. 
Set $\eta_0:=\eta\restriction(\tau+1)$, so $\eta_0\notin (T^g)^{\gamma}$. 
Since $\gamma\in C_2$, we can find $\eta_1$ with the following properties:
\begin{itemize}
    \item [$\mathbf{a}$.] $\eta_1\in(T^g)^{\gamma}$
    \item [$\mathbf{b}$.] $\dom(\eta_1)=\tau+1$;
    \item [$\mathbf{c}$.] $\eta_0\restriction \tau=\eta_1\restriction\tau$;
    \item [$\mathbf{d}$.] $\bar{\mu}_{\eta_0}=\bar{\mu}_{\eta_1}$;
    \item [$\mathbf{e}$.] $\Pi(\eta_0),\Pi(\eta_1)$ have the same atomic type over $(T^f)^{\alpha}$. 
\end{itemize}
Notice that from $\mathbf{a-c}$ we can deduce that $\eta_1\in B^g(\eta\restriction\tau, \gamma)$. Since $\beta\in C_1$, $\Pi(\eta_0),\Pi(\eta_1)\in (T^f)^{\beta}$.

\begin{claim}
$\tp_{at}(\Pi(\eta_0)^{\smallfrown}\Pi(\eta),\emptyset,T^f)=\tp_{at}(\Pi(\eta_1)^{\smallfrown}\Pi(\eta),\emptyset,T^f)$. 
\end{claim}
\begin{proof}
    By Lemma~\ref{similar type lemmaf} it is enough to prove that for all $i<n$,
     $$\tp_{\bs}((\Pi(\eta_0)_i)^{\downarrow},(\Pi(\eta)_i)^{\downarrow},(T^f,\prec,<))=\tp_{\bs}((\Pi(\eta_1)_i)^{\downarrow},(\Pi(\eta)_i)^{\downarrow},(T^f,\prec,<)).$$
     Let $i<n$ be fixed. Let $\nu=\Pi(\eta)_k\restriction r\in(\Pi(\eta)_k)^{\downarrow}$ and $\nu_0=\Pi(\eta_0)_i\restriction r_0\in (\Pi(\eta_0)_i)^{\downarrow}$.

        $\underline{\nu_0=\nu}$: Since $\nu_0\in (T^f)^{\beta}$ we have $\nu\in (T^f)^{\beta}$. 
         By the choice of $\alpha$, $\nu\in (T^f)^{\alpha}$ and hence $\nu_0\in (T^f)^{\alpha}$. By $\mathbf{e}$, $\nu_0\preceq\Pi(\eta_1)_i$. In particular, $\nu_0=\nu=\Pi(\eta_1)_i\restriction r_0$.

         \underline{$\nu_0,\nu$ are $\prec$ compatible}: 
         If $\nu_0\prec\nu$, then as in the previous case, we can conclude that $\nu_0\in (T^f)^{\alpha}$. By $\mathbf{e}$, we get that $\Pi(\eta_1)_i\restriction r_1=\nu_0\prec\nu$.

         If $\nu\prec\nu_0$, then since $\nu_0\in (T^f)^{\beta}$, $\nu\in (T^f)^{\beta}$. So $\nu\in (T^f)^{\alpha}$. By $\mathbf{e}$, we conclude that $\nu\prec\Pi(\eta_1)_i\restriction r_0$.

        \underline{$\nu_0,\nu$ are $<$ compatible}:
        Since $\nu_0,\nu$ are $<$ compatible, $r=r_0$. If $\nu\in (T^f)^{\alpha}$, then by $\mathbf{e}$, $\nu_0<\nu$ implies $\Pi(\eta_1)_i\restriction r_0<\nu$, and $\nu<\nu_0$ implies $\nu<\Pi(\eta_1)_i\restriction r_0$

         Let us suppose that $\nu\notin (T^f)^{\alpha}$, so $\nu\notin (T^f)^{\delta}$, $r=\tau+1$, and $\nu_0\restriction\tau=\nu\restriction\tau\in (T^f)^{\delta}$. 
         Let us suppose, towards contradiction, that $\nu_0<\nu< \Pi(\eta_1)_i\restriction r_0$ or $\Pi(\eta_1)_i\restriction r_0<\nu<\nu_0$ holds. 
         
         Let us show the latest case, the other case is similar. Since $$\nu_0\subseteq\Pi(\eta_0)_i\restriction r_0\subseteq \Pi(\eta_0)_i\in (T^f)^\beta.$$ Applying (**) with $u=\nu_0$, $w^0=\nu\restriction\tau$, and $w^1=\nu$, we have that there is $u'\in B^f(\nu\restriction\tau,\alpha_i)$ such that $\nu\leq_Iu'\leq_I \nu_0$. Thus $\Pi(\eta_1)_i\restriction r_0<\nu\leq_Iu'\leq_I\nu_0$, but $u'\in (T^f)^{\alpha}$, a contradiction with $\mathbf{e}$.
\end{proof}

From the previous claim and the way the models were constructed (recall the formula $\varphi$ from Fact \ref{construction_unst}), we know
$$\mathcal{M}^f_1\models\varphi(\bar{\mu}_\eta(\bar{b}_{\Pi(\eta)})^{\smallfrown}\bar{\mu}_{\eta_0}(\bar{b}_{\Pi(\eta_0)}))\Leftrightarrow\varphi(\bar{\mu}_\eta(\bar{b}_{\Pi(\eta)})^{\smallfrown}\bar{\mu}_{\eta_1}(\bar{b}_{\Pi(\eta_1)}))$$
so
$$\mathcal{M}^g_1\models\varphi(\bar{a}_{\eta}^{\smallfrown}\bar{a}_{\eta_0})\Leftrightarrow(\varphi(\bar{a}_{\eta}^{\smallfrown}\bar{a}_{\eta_1})).$$
But, since $\eta_0\preceq\eta$ and $\eta_1\not\preceq\eta$, then 
$$\mathcal{M}^g\models\varphi(\bar{a}_{\eta}^{\smallfrown}\bar{a}_{\eta_0})\wedge\neg(\varphi(\bar{a}_{\eta}^{\smallfrown}\bar{a}_{\eta_1})).$$
A contradiction, since $\mathcal{M}^g=\mathcal{M}^g_1\restriction\tau$.\qedhere

\end{proof}

\section{Conclusions}

Let us proceed to the proof of the main result. 
\begin{lemma}\label{red_cor}
Let $\mathcal T$ be a countable complete theory over a countable relational vocabulary, and unstable or superstable non-classifiable. Let $\lambda=\omega_1$ if $\mathcal{T}$ has the DOP, otherwise $\lambda=\omega$. There is a continuous function $\mathcal{F}:\mathbb F^*\rightarrow\kappa^\kappa$ such that for all $\eta,\xi\in \mathbb F$, $\eta=^\mathbb F_{S_0}\xi$ if and only if $\mathcal F(\eta)\cong_\mathcal T \mathcal F(\xi)$.    
\end{lemma}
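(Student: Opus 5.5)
The plan is to set $\mathcal F(\eta)$ to be a code of the Ehrenfeucht-Mostowski model $\mathcal M^\eta$ from Fact \ref{construction_unst} (or its OTOP/DOP analogue), identified with a structure on $\kappa$ via Definition \ref{struct}. The two directions of the equivalence are then handled separately, and continuity is obtained by building the coding along the filtrations already fixed in Section 3.

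\textbf{The forward direction.} Suppose $\eta=^{\mathbb F}_{S_0}\xi$. By Lemma \ref{representation_tree}, $T^\eta\cong T^\xi$. Isomorphic index trees give isomorphic Ehrenfeucht-Mostowski models, because $\mathcal M^\eta_1$ is the Skolem hull of a skeleton indexed by $T^\eta$ that is indiscernible relative to atomic types. In the unstable/DOP/OTOP cases the skeleton is indexed through the linear order $K(\eta)$, and $K(\eta)\cong K(\xi)$ follows directly from $T^\eta\cong T^\xi$. Hence $\mathcal M^\eta\cong\mathcal M^\xi$.

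\textbf{The backward direction.} This is exactly Lemma \ref{iso_direction}: if $\mathcal M^\eta\cong\mathcal M^\xi$, then $\eta=^{\mathbb F}_{S_0}\xi$. It remains to make the coding continuous and to check that each code $\mathcal F(\eta)$ is a model of $\mathcal T$. The latter holds by item (2) of Fact \ref{construction_unst}.

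\textbf{Continuity.} First I would fix a bijection between $\kappa$ and the domain of the universal model $EM(l_B)$, or equivalently between $\kappa$ and the set of pairs (term, finite tuple from the universal index tree). The ambient index set is chosen so that $\mathbb T\times\{0,1\}\times\kappa$ contains every $T^\eta$. The choice must ensure that elements generated by nodes of $(T^\eta)^\alpha$ receive codes below some ordinal $\theta_\alpha$, where the sequence $\theta_\alpha$ is increasing and does not depend on $\eta$. This is possible because $|(T^\eta)^\alpha|<\kappa$ and $\kappa$ is inaccessible.

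The earlier observation gives $\eta\restriction\alpha=\xi\restriction\alpha \Rightarrow (A^\eta_\sigma)^\alpha=(A^\xi_\sigma)^\alpha$ for every $\sigma$. Taking unions over $\sigma<\alpha$, we get $(T^\eta)^\alpha=(T^\xi)^\alpha$ (Notation \ref{notation1}), and since $T^\eta_\sigma$ only removes leaves coloured $0$, with colours determined inside the filtration, this restriction is also fixed. Consequently the substructure of $\mathcal M^\eta$ generated by $(T^\eta)^\alpha$ is literally the same as that of $\mathcal M^\xi$: it is the submodel of the universal EM-model generated by the same skeleton elements. Since $\mathcal M^\eta$ is not literally a structure on $\kappa$, I would transport it to $\kappa$ by a bijection $h_\eta$ assembled from fixed bijections between each level $\theta_\alpha$ and the elements generated by $(T^\eta)^\alpha$. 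I would pad with dummy elements so that every $\mathcal M^\eta$ has size exactly $\kappa$; this is possible because every node extends to a leaf, so the tree has size $\kappa$. The resulting $\mathcal F(\eta)(\pi(m,\bar a))$ then depends only on $\eta\restriction\alpha$ once $\bar a$ lies below $\theta_\alpha$, which gives continuity. I would take $\pi$ so that $\pi(m,\bar a)\ge\max\bar a$, or simply choose $\alpha$ large enough for each coordinate.

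\textbf{Main obstacle.} I expect the delicate step to be the bookkeeping that makes the coding of $(T^\eta)^\alpha$ and of its generated submodel depend only on $\eta\restriction\alpha$, uniformly in $\eta$. What I would try first is to define the bijection via a canonical well-order of the universal index set $\mathbb T\times\kappa$ and of terms, restricted to the set of generated elements. Since these sets agree whenever $\eta\restriction\alpha=\xi\restriction\alpha$, the induced enumerations agree on initial segments. If the order types obstruct this, I would instead code elements by their canonical names and fill unused codes with a fixed copy of a model of $\mathcal T$, checking that the decoded structure stays isomorphic to $\mathcal M^\eta$.
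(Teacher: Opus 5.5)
Your architecture matches the paper's. The forward direction comes from Lemma \ref{representation_tree} plus the fact that isomorphic index trees (hence isomorphic $K(\eta)$) give isomorphic EM models. The backward direction is Lemma \ref{iso_direction}. Continuity comes from coding $\mathcal M^\eta$ on $\kappa$ coherently along the filtration $(T^\eta)^\alpha$; the paper imports the coherent bijections $E_f$ from earlier work, and your level-by-level canonical enumeration of names is a reasonable substitute.

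The gap is in the input to that coding: the claim that $\eta\restriction\alpha=\xi\restriction\alpha$ implies $(T^\eta)^\alpha=(T^\xi)^\alpha$, which you justify by ``colours determined inside the filtration''. As sets, $I^\eta_\sigma$ and $A^\eta_\sigma$ do not depend on $\eta$. The only dependence on $\eta$ is which leaves get colour $0$, i.e.\ $H=\chi_{S^\eta_\sigma}$ restricted to $\alpha$. But $S^\eta_\sigma$ is defined by a case split on whether $\eta^{-1}\{\sigma\}\cap S_0$ is stationary, and no initial segment of $\eta$ determines that.

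Here is a concrete counterexample. Let $f(\delta)=1$ for $\delta\geq 2$ and $f(0)=f(1)=0$, and let $g_\alpha$ agree with $f$ below $\alpha$ and be $0$ from $\alpha$ on. Both lie in $\mathbb F^*$ and $g_\alpha\restriction\alpha=f\restriction\alpha$, yet $S^f_1=S_0$ while $S^{g_\alpha}_1=E_1$. Take a leaf $\nu$ of the common underlying set $A^f_1=A^{g_\alpha}_1$ whose colour is $H(\delta)$ for some $\delta\in E_1$ (clause (11)(b) of Definition \ref{treeconst}). It lies in $(A^f_1)^\beta$ for some $\beta$ independent of $\alpha$. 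It is deleted in $T^f_1$, since $\delta\notin S_0$, but kept in $T^{g_\alpha}_1$. Hence $(T^f)^\beta\neq(T^{g_\alpha})^\beta$ for every $\alpha$. The coherence $\mathcal M_{f,\beta}=\mathcal M_{g,\beta}$ that your coding needs therefore fails, and the continuity argument does not go through.

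You inherited this defect rather than introduced it: the paper's proof asserts the same equivalence $f\restriction\alpha=g\restriction\alpha\Leftrightarrow(T^f)^\alpha=(T^g)^\alpha$. It has to be repaired in the definition of $S^f_\sigma$ by making it local in $f$, for instance $S^f_\sigma=(f^{-1}\{\sigma\}\cap S_0)\cup E_\sigma$. With this definition:
\begin{itemize}
\item $H\restriction\alpha$ depends only on $f\restriction\alpha$.
\item $f=^\kappa_{S_0}g$ still makes $S^f_\sigma\triangle S^g_\sigma$ nonstationary for every $\sigma$, so the tree isomorphism lemma survives. Lemma \ref{representation_tree} also survives, since every node still extends to a leaf because $E_\sigma\subseteq S^f_\sigma$.
\item In Lemma \ref{iso_direction} you get $\sigma_f\neq\sigma_g$ from two applications of Fodor's lemma to $\{\delta\in S_0\mid f(\delta)\neq g(\delta)\}$. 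This uses that for $\delta\in S_0$, $\delta\in S^f_\sigma$ iff $f(\delta)=\sigma$.
\end{itemize}
After that change, the canonical-enumeration version of your coding from your last paragraph works as you describe.
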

\begin{proof}
    From Lemma \ref{representation_tree} and Lemma \ref{iso_direction}, for all $f,g\in \mathbb{F}^*$, $\mathcal{M}^f$ and $\mathcal{M}^g$ are isomorphic if and only if $f=^{\mathbb F}_{S_0} g$. 

    For every $f\in \mathbb F^*$, we will construct a model $\mathcal{M}_f$ isomorphic to $\mathcal{M}^f$. We will also construct a function $\mathcal{G}:\{\mathcal{M}_f\mid f\in 2^\kappa\}\rightarrow 2^\kappa$, such that $\mathcal{A}_{\mathcal{G}(\mathcal{M}_f)}\cong \mathcal{M}_f$ and $f\mapsto \mathcal{G}(\mathcal{M}_f)$ is continuous. 
    
    Recall the definition of $(T^g)^\alpha$ from Notation \ref{notation1}. Since $f\in \mathbb F^*$ and for all $\alpha,\sigma<\kappa$ $f\restriction\alpha=g\restriction\alpha$ if and only if $(T_\sigma^f)^\alpha=(T_\sigma^g)^\alpha$, $$f\restriction \alpha=g\restriction \alpha \Leftrightarrow (T^f)^\alpha=(T^g)^\alpha.$$ 
    
For all $\alpha<\kappa$, $A\in K^{\lambda\cdot\lambda}_{tr}$, and a filtration $\mathbb{A}=\langle A^\iota\mid \iota<\kappa  \rangle$ of $A$, let us denote by $\tilde{A}^\alpha$ the set $\{a_s\mid s\in A^\alpha\}$, recall the construction of the models $\mathcal{M}^f_1$. 
Since for all $\alpha<\kappa$, $$(T^f)^\alpha=(T^g)^\alpha \Leftrightarrow SH(\tilde{(T^f)}^\alpha) = SH(\tilde{(T^g)}^\alpha),$$
for all $f\in \mathbb F^*$ we can construct a tuple $(\mathcal{M}_f, F_f)$, where $\mathcal{M}_f$ is a model isomorphic to $\mathcal{M}^f$ and $F_f:\mathcal{M}_f\rightarrow \mathcal{M}^f$ is an isomorphism, that satisfies the following: denote by $\mathcal{M}_{f,\alpha}$ the preimage $F^{-1}_f[SH(\tilde{(T^f)}^\alpha)\restriction \tau]$ and $$f\restriction \alpha=g\restriction \alpha \Leftrightarrow \mathcal{M}_{f,\alpha}=\mathcal{M}_{g,\alpha}.$$
For every $f\in \mathbb F^*$ there is a bijection $E_f: \dom(\mathcal{M}_f)\rightarrow \kappa$, such that for every $f,g\in \mathbb F^*$ and $\alpha<\kappa$, if $f\restriction \alpha=g\restriction \alpha $, then $E_f\restriction \dom(\mathcal{M}_{f,\alpha})=E_g\restriction \dom(\mathcal{M}_{g,\alpha})$ (see \cite{Mor}). 
Let us denote by $\pi$ the bijection from Definition \ref{struct}, define the function $\mathcal{G}$ by: 
$$\mathcal{G}(\mathcal{M}_f)(\alpha)=\begin{cases} 1 &\mbox{if } \alpha=\pi(m,a_1,a_2,\ldots,a_n) \text{ and }\\ & \mathcal{M}_{f}\models Q_m(E_f^{-1}(a_1),E_f^{-1}(a_2),\ldots,E_f^{-1}(a_n))\\
0 & \mbox{otherwise. } \end{cases}$$
To show that $G:\mathbb F\rightarrow 2^\kappa$, $G(f)=\mathcal{G}(\mathcal{M}_f)$ is continuous, let $[\zeta\restriction \alpha]$ be a basic open set and $\xi\in G^{-1}[[\zeta\restriction \alpha]]$. There is $\beta<\kappa$ such that for all $\epsilon<\alpha$, if $\epsilon=\pi(m,a_1,\ldots, a_n)$, then $E^{-1}_\xi (a_i)\in \dom(\mathcal{M}_{\xi,\beta})$ holds for all $i\leq n$. Since for all $\eta\in [\xi\restriction\beta]$ it holds that $\mathcal{M}_{\eta,\beta}=\mathcal{M}_{\xi,\beta}$, for any $\epsilon<\alpha$ that satisfies $\epsilon=\pi(m,a_1,\ldots, a_n)$ $$\mathcal{M}_\eta\models Q_m(E_\eta^{-1}(a_1),E_\eta^{-1}(a_2),\ldots,E_\eta^{-1}(a_n))$$ if and only if $$\mathcal{M}_\xi\models Q_m(E_\xi^{-1}(a_1),E_\xi^{-1}(a_2),\ldots,E_\xi^{-1}(a_n)).$$
We conclude that $G$ is continuous.
\end{proof}

\begin{thm}
    Let $\kappa$ be a strongly inaccessible cardinal. If $\mathcal T_1$ is a theory with less than $\kappa$ non-isomorphic models of size $\kappa$ and $\mathcal T_2$ is unstable or superstable non-classifiable theory, then $\cong_{\mathcal T_1}\ \reduc\ \cong_{\mathcal T_2}$.
\end{thm}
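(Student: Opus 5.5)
The plan is to compose two reductions that the paper has already built: first reduce $\cong_{\mathcal T_1}$ into the Fodor equivalence $=^\mathbb F_{S_0}$, and then reduce that into $\cong_{\mathcal T_2}$.

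\textbf{Step 1.} Since $\mathcal T_1$ has fewer than $\kappa$ non-isomorphic models of size $\kappa$, I will apply Proposition \ref{first_reduction} with $S=S_0$. This gives a continuous $F:\kappa^\kappa\to\kappa^\kappa$ reducing $\cong_{\mathcal T_1}$ to $=^\mathbb F_{S_0}$.

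Two details need checking here.
\begin{itemize}
\item $\mathcal T_1$ is classifiable (indeed shallow). The reason is that non-classifiable theories have $2^\kappa$ models of size $\kappa$ by Shelah's main gap. At an inaccessible $\kappa$ I will simply use this as a known consequence, as the proof of Proposition \ref{first_reduction} implicitly does.
\item By the last paragraph of that proof, $F$ can be chosen with $F(\eta)\restriction\theta\equiv 0$. Hence $F(\eta)(\alpha)<\alpha$ for all $\alpha>0$, so $F$ actually maps into $\mathbb F^*$.
\end{itemize}
Continuity into $\mathbb F^*$ holds with the subspace topology.

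\textbf{Step 2.} Choose $\lambda$ according to $\mathcal T_2$ as in Lemma \ref{red_cor}: $\lambda=\omega_1$ in the DOP case and $\lambda=\omega$ otherwise. Let $S_0$ be the corresponding stationary subset of $S^\kappa_\lambda$ used to build the trees $T^f$. The stationary set used in Step 1 must be this same $S_0$, which is allowed since Proposition \ref{first_reduction} works for an arbitrary stationary $S$.

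Lemma \ref{red_cor} then provides a continuous $\mathcal F:\mathbb F^*\to\kappa^\kappa$ such that for $f,g\in\mathbb F^*$,
\[
f=^\mathbb F_{S_0}g \iff \mathcal A_{\mathcal F(f)}\cong\mathcal A_{\mathcal F(g)}.
\]
Every $\mathcal A_{\mathcal F(f)}$ is isomorphic to $\mathcal M^f\models\mathcal T_2$, so both structures model $\mathcal T_2$. Therefore this equivalence says exactly that $\mathcal F(f)\cong_{\mathcal T_2}\mathcal F(g)$.

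\textbf{Step 3.} Set $\mathcal H=\mathcal F\circ F$. It is continuous as a composition of continuous maps. For any $\eta,\xi\in\kappa^\kappa$,
\[
\eta\cong_{\mathcal T_1}\xi \iff F(\eta)=^\mathbb F_{S_0}F(\xi) \iff \mathcal H(\eta)\cong_{\mathcal T_2}\mathcal H(\xi).
\]
This handles uniformly the inputs $\eta$ whose $\mathcal A_\eta$ is not a model of $\mathcal T_1$, because $\cong_{\mathcal T_1}$ lumps all of them into a single class and $F$ respects that.

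\textbf{Main obstacle.} The delicate point is compatibility of the parameters: the $S_0$ in Step 1 must coincide with the $S_0\subseteq S^\kappa_\lambda$ from Section 3. It must also be checked that the image of $F$ lies in $\mathbb F^*$ and not merely in $\mathbb F$, since Lemma \ref{red_cor} is stated only for $\mathbb F^*$. For the latter, I will redefine $F(\eta)(\alpha)=0$ for $\alpha\le\theta$, and for $\alpha\in C$ with $\alpha>\theta$ use enumerations $x^\alpha_i$ indexed by $i<\theta$. Neither change affects the stationary-modulo behaviour on $S_0$, so the resulting map is still a reduction.
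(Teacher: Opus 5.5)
Your proposal is correct and is essentially the paper's proof. The paper simply composes the reduction of Proposition \ref{first_reduction}, taken with $S=S_0$ and landing in $\mathbb F^*$ as in the last paragraph of its proof, with the continuous map $\mathcal F\colon\mathbb F^*\to\kappa^\kappa$ of Lemma \ref{red_cor}; the choice $S=S_0$ and the $\mathbb F^*$-range check you single out are exactly what the paper's one-line argument leaves implicit.
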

\begin{proof}
    It follows from Proposition \ref{first_reduction} and Lemma \ref{red_cor}.
\end{proof}

Notice that for any stationary $S\subseteq \kappa$, $=^\mathbb F_{S_0}$ is bireducible with $=^\mathbb F_{S_0}\cap\ (\mathbb F^*\times \mathbb F^*)$. This is witness by the identity and the function:
$$\mathcal{F}(\eta)(\alpha)=\begin{cases} 
\eta(\alpha) &\mbox{if } \omega\leq\eta(\alpha)<\alpha\\
\eta(\alpha)+1 & \mbox{if } \eta(\alpha)<\omega\leq\alpha\\
0 & \mbox{if } \omega\leq\alpha\leq \eta(\alpha)\\
0 & \mbox{othewise. }
\end{cases}$$

\begin{prop}
    For all $f\in \mathbb F$ and $\lambda<\kappa$, $f=_{S_0}^\mathbb F g$ if and only if $T^f\cong T^g$.
\end{prop}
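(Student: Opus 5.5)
The plan is to prove the two directions separately. Throughout, $f,g\in\mathbb F$ and $T^f,T^g$ are the disjoint-union trees $\bigvee_{\sigma<\kappa}T^f_\sigma$ built above.

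\emph{Left to right.} This direction is essentially already in hand. Assume $f=^{\mathbb F}_{S_0} g$; then $f=^\kappa_{S_0}g$. By Fact \ref{equality_lemma} and the lemma giving $I^f_\sigma\cong I^g_\sigma$, we get $I^f_\sigma\cong I^g_\sigma$ for every $\sigma<\kappa$. The properties quoted from \cite{Mor23} then give $A^f_\sigma\cong A^g_\sigma$. We want this isomorphism to preserve the colour $d^f_\sigma$, so that it restricts to an isomorphism $T^f_\sigma\cong T^g_\sigma$; I will take this from the construction, as in \cite{Mor23}. Gluing these isomorphisms across the disjoint union, with the root sent to the root, gives $T^f\cong T^g$, exactly as in Lemma \ref{representation_tree}.

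\emph{Right to left.} I argue by contraposition. Suppose $f\neq^{\mathbb F}_{S_0} g$ but $F\colon T^g\to T^f$ is an isomorphism of ordered trees. I will rerun the combinatorial core of Lemma \ref{iso_direction} with $F$ in place of $\Pi$; this is simpler, since $\Pi(\eta)$ becomes the single node $F(\eta)$ and no Skolem terms $\bar\mu_\eta$ appear. The steps are as follows.
\begin{enumerate}
\item Use Fodor's lemma, which applies because elements of $\mathbb F$ are eventually regressive, to obtain $\sigma_f<\sigma_g$ and stationary sets $S^f\subseteq S^f_{\sigma_f}$ and $S^g=S^f\cap g^{-1}\{\sigma_g\}$, as in Lemma \ref{iso_direction}. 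Eventual regressiveness is enough here, since we may intersect with a tail.
\item Form the club $C$ of $\delta$ that are closed under $F$ and $F^{-1}$ with respect to the filtrations $(T^g)^\delta$ and $(T^f)^\delta$, and that satisfy the niceness condition $(*)$.
\item Choose $\delta\in S^g\cap C$ and a leaf $\eta\in\mathcal B(T^g)$ with $\sup\rng(\eta_8)=\delta$ and $\acc(\rng(\eta_8))\cap S_{\ge\lambda}\subseteq C$.
\end{enumerate}
An isomorphism preserves heights and the root, so $F$ maps each component $T^g_\sigma$ onto some component $T^f_{\sigma'}$. Since $F$ respects the filtrations on a club, $F(\eta)$ is then a leaf of $T^f$ with $\sup\rng(F(\eta)_8)=\delta$.

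The key step, and the main obstacle, is a direct version of Claim \ref{claim of beta}. We have $\acc(\rng(\eta_8))\setminus\{\delta\}\subseteq E_{\sigma_g}$. Similarly, $\acc(\rng(F(\eta)_8))\setminus\{\delta\}$ lies in the $E$-set of the component containing $F(\eta)$, by Definition \ref{treeconst}(8). On a club of levels, the cofinality-$\lambda$ accumulation points along $\eta$ and along $F(\eta)$ coincide, because both branches are cofinal in $(T^\delta)$ and $F$ preserves the filtration on $C$. The $E_\sigma$ are pairwise disjoint, so $F(\eta)$ must lie in $T^f_{\sigma_g}$, and its colour forces $\delta\in S^f_{\sigma_g}$. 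But $\delta\in S^f\subseteq S^f_{\sigma_f}$, and $S^f_{\sigma_f}$ is disjoint from $S^f_{\sigma_g}$ because both are preimages of distinct values intersected with $S_0$, or else are disjoint $E$'s. This is a contradiction.

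If the level-matching argument is too coarse, I would fall back on the full argument of Lemma \ref{iso_direction}. That means choosing $\beta$, $\gamma$ and nodes $\eta_0,\eta_1$ with $\eta_0\preceq\eta$, $\eta_1\not\preceq\eta$ but $F(\eta_0),F(\eta_1)$ of the same type over $F(\eta)$, and using Lemma \ref{similar type lemmaf}. Since $F$ preserves $\prec$, this already contradicts $\eta_0\prec\eta$ versus $\eta_1\not\prec\eta$, with no appeal to $\varphi$. The point I am least sure of is the level-matching of accumulation points; I would handle it by restricting to $\delta\in C$ and using that $F$ induces a bijection between $(T^g)^\alpha$ and $(T^f)^\alpha$ for $\alpha\in C$.
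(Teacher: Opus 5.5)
Your left-to-right direction is the paper's, via Lemma \ref{representation_tree}. For the converse you take a genuinely different route, and it works in outline.

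The paper never uses the tree isomorphism directly. It fixes a countable complete unstable theory, notes that $T^f\cong T^g$ gives $\mathcal M^f\cong\mathcal M^g$, and quotes Lemma \ref{iso_direction}. That is a two-line proof, but it rests on the EM machinery of Fact \ref{construction_unst}. It also uses a lemma stated only for $\mathbb F^*$, so the case $f\in\mathbb F$ needs an extra remark.

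You instead run a Hyttinen--Kulikov style argument with $F$ in place of $\Pi$. Since $F(\eta)$ is a single node of the same height, and $F$ respects the filtrations on a club, Claim \ref{claim of beta} and the $\eta_0,\eta_1$ type-matching become unnecessary. Only two things remain:
\begin{itemize}
\item the component of $F(\eta)$ is detected by which $E_\sigma$ contains its cofinality-$\lambda$ accumulation points;
\item membership of $\delta$ in $S^f_\sigma$ is detected by the colour.
\end{itemize}
This gives a self-contained combinatorial proof with no model theory, handles $f\in\mathbb F$ directly, and is uniform in $\lambda$. You do not even need the minimal-$\Sigma$ bookkeeping. Applying Fodor twice to $\{\delta\in S_0\mid f(\delta)\neq g(\delta)\}$ gives $\sigma_1\neq\sigma_2$ and a stationary set on which $f\equiv\sigma_1$ and $g\equiv\sigma_2$, and that suffices.

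Two steps need to be made explicit.

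First, ``its colour forces $\delta\in S^f_{\sigma_g}$'' is not automatic. Leaves whose block index is eventually constant and nonzero get arbitrary colours from the $Z$-trees. What you need is this: $F(\eta)\in\acc((T^f)^\delta)\setminus(T^f)^\delta$ because $\delta\in C$. By Fact \ref{4.5.1}, such a leaf cannot end in a block with nonzero index. Hence it is of type (b), or lies entirely in the block of ${}^EZ^{0,0}_0$, and in either case its colour is $H(\delta)$.

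Second, the level matching at a level $\tau$ of cofinality $\lambda$ needs two conditions: $\beta=\sup\rng(\eta_8\restriction\tau)\in C$, and $\eta\restriction\tau\in\acc((T^g)^\beta)\setminus(T^g)^\beta$. The second fails if $\tau$ lies strictly inside a $Z$-block whose upper bound $\varrho$ is $\ge\beta$. So choose $\eta$ so that the block index changes cofinally below each such $\tau$, for example by taking blocks of length one. Then $F(\eta)\restriction\tau=F(\eta\restriction\tau)\in\acc((T^f)^\beta)\setminus(T^f)^\beta$. Remark \ref{connection to sup of eta5} and Definition \ref{treeconst}(8) then put $\beta$ into the $E_\sigma$ of $F(\eta)$'s component.

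Finally, when $\lambda>\omega$ the inclusion $\acc(\rng(\eta_8))\setminus\{\delta\}\subseteq E_{\sigma_g}$ holds only for accumulation points at levels of cofinality $\lambda$.
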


\begin{proof}
From Lemma \ref{representation_tree}, $f=_{S_0}^\mathbb F g$ implies $T^f\cong T^g$. 
For the other direction, let $\mathcal{T}$ be a countable complete unstable theory over a countable relational vocabulary. Let us suppose, towards contradiction, that $f\neq_{S_0}^\mathbb F g$ and $T^f\cong T^g$. Since $T^f\cong T^g$, $\mathcal{M}^f$ and $\mathcal{M}^g$ are isomorphic. From Lemma \ref{iso_direction}, we conclude that $f=_{S_0}^\mathbb F g$, a contradiction.
\end{proof}

\section*{Acknowledgements}
Due to extraordinary circumstances, the writing and editing of this research took longer than expected. 
Ido Feldman was supported by the Israel Science Foundation (grant agreement 203/22). 
Miguel Moreno was supported by the the Austrian Science Fund FWF, grant M 3210. 
Miguel Moreno was also supported by the European Research Council (ERC) under the European Union's Horizon 2020 research and innovation programme (grant agreement No 101020762).
Miguel Moreno was also supported by the Research Council of Finland (decision number 368671); and the project \textit{Perspectives on computational logic}, funded by the Research Council of Finland, project number 36942.

We thank Tapani Hyttinen for his feedback on Ehrenfeucht-Mostowski models, Assaf Rinot for his feedback on ordered coloured tree, Martin Hils for his feedback on forking sequences.

\providecommand{\bysame}{\leavevmode\hbox to3em{\hrulefill}\thinspace}
\providecommand{\MR}{\relax\ifhmode\unskip\space\fi MR }
\providecommand{\MRhref}[2]{%
  \href{http://www.ams.org/mathscinet-getitem?mr=#1}{#2} }
\providecommand{\href}[2]{#2}

\end{document}